\documentclass[12pt]{amsart}
\usepackage[T1]{fontenc}
\usepackage[utf8]{inputenc}
\usepackage[
  backend=biber,
  style=numeric,
  sorting=none,
  giveninits=false,
  doi=true,
  isbn=true,
  url=true,
  eprint=true
]{biblatex}
\usepackage{amsmath, amssymb, amsthm}
\usepackage{mathrsfs}
\usepackage{bbm}
\usepackage{hyperref}
\usepackage{geometry}
\usepackage{enumitem}
\usepackage{booktabs}
\usepackage{array}
\usepackage{tikz-cd}   

\newtheorem{theorem}{Theorem}[section]
\newtheorem{proposition}[theorem]{Proposition}
\newtheorem{lemma}[theorem]{Lemma}

\newtheorem{definition}[theorem]{Definition}

\theoremstyle{remark}
\newtheorem{remark}[theorem]{Remark}

\newcommand{\Perv}{\mathrm{Perv}}
\newcommand{\rat}{\mathrm{rat}}

\newcommand{\var}{\mathrm{var}}

\newcommand{\Ext}{\mathrm{Ext}}

\newcommand{\Cone}{\mathrm{Cone}}

\newcommand{\Id}{\mathrm{Id}}

\begin{document}

\title[Interacting Multi-Node Conifold Light Sectors]{Interacting Multi-Node Conifold Light Sectors}

\author{Abdul Rahman}
\thanks{Email: arahman@alum.howard.edu}
\subjclass[2020]{14D07, 32S30, 14F43, 14F18, 53D45, 14J32}
\keywords{conifold degenerations, Calabi--Yau threefolds, vanishing cycles, Picard--Lefschetz theory, mixed Hodge modules, perverse sheaves, limiting mixed Hodge structures, Stokes matrices, Hodge atoms, interacting light sectors, mirror symmetry}

\begin{abstract}
We study finite-node conifold degenerations of Calabi--Yau threefolds from the point of view of interacting light sectors. Although each ordinary double point contributes a rank-one local vanishing sector, the corrected global object need not assemble as a freely independent sum of nodewise pieces. Using the corrected perverse and mixed-Hodge-module degeneration package, the global gluing law for corrected extension classes, and the rigid-flexible atom decomposition on the \(F\)-bundle side, we define an interacting multi-node light-sector package and prove a block-reduced structure theorem. In the block-separated cycle family, the finite-node package separates into two logically distinct layers: relation collapse, controlled by a common relation lattice on the corrected-extension, smoothing, and resolution sides, and residual interaction among the surviving global sectors, controlled by a reduced block interaction matrix on the transport and atom sides. The result isolates the geometric and Hodge-theoretic precursor of coupled conifold light states and provides the mathematical input for a later multi-node reformulation of Strominger's conifold mechanism.
\end{abstract}
\maketitle
\tableofcontents

\section{Introduction}

In Strominger's analysis, the singularity of the low-energy effective theory is resolved by integrating in the light state that becomes massless at the conifold, so the one-node picture is governed by a single local vanishing sector and its associated monodromy \cite{Strominger95}. On the sheaf-theoretic side, the corrected degeneration object is constructed from nearby and vanishing cycles and isolates the singular contribution as a corrected extension of the invariant bulk sector \cite{RahmanSchoberPaper,RahmanPerverseNearbyCycles}. On the mixed-Hodge-module side, the same corrected object admits a canonical lift, so the local singular sector is incorporated into a limiting Hodge-theoretic package rather than appearing only at the level of perverse sheaves \cite{RahmanMixedHodgeModules}. On the \(F\)-bundle side, the same local mechanism is reflected by Picard--Lefschetz monodromy, Stokes data, and the rigid-flexible atom decomposition \cite{RahmanHodgeAtoms2026}. Thus the one-node picture is not the missing part of the story: its local singular sector is already visible across several compatible mathematical realizations.

What Strominger explicitly left open is the harder finite-node regime. After writing down the standard simple-conifold monodromy, he observed that there are ``more complicated types of conifolds with multiple degenerations and different monodromies'' and that further analysis is required to determine whether they can be resolved in a similar fashion \cite[p.~4, footnote 2]{Strominger95}. That is exactly where the present paper enters. In the multi-node case, if
\[
\Sigma=\{p_1,\dots,p_r\},
\]
then each node contributes a formal rank-one local singular sector, so at first sight one obtains a free nodewise collection of local sectors. But this is only the ambient local picture. The corrected global object comes from one projective degeneration, not from \(r\) unrelated local models, and therefore the nodewise sectors need not remain freely independent after global assembly. The central mathematical problem is to determine the actual finite-node light-sector space realized by geometry, together with the interaction structure governing its gluing, transport, and splitting behavior.

Earlier works provide the ingredients for that analysis, but not yet the unified finite-node object itself. On the corrected-extension side, the global-gluing framework shows that the corrected extension class may be forced into a smaller relation-controlled subspace determined by cycle-node incidence and global compatibility data \cite{RahmanGlobalGluing}. On the mixed-Hodge and \(F\)-bundle side, the Hodge-atoms framework shows that the same corrected degeneration object need not split into independent flexible pieces; instead, the corresponding Stokes and Picard--Lefschetz data may be noncommuting and the flexible sectors may mix nontrivially \cite{RahmanHodgeAtoms2026}. These results strongly indicate that the finite-node case carries a genuine interaction structure invisible in the naive free nodewise picture, but prior to the present paper that structure had not yet been isolated as a single mathematical package.

The purpose of this paper is to identify that package. We define and study the finite-node \emph{interacting light-sector package}: the package that records, in one finite-node object, the local rank-one node sectors, their global assembly law, their transport-side interaction data, and their atom-side mixing behavior. This is the new contribution of the paper: we isolate the finite-node package compatibly realized by corrected global gluing, by Picard--Lefschetz/Stokes transport, and by atom-side splitting behavior, and we organize these realizations into a common comparison framework. More sharply, in the block-separated cycle family the paper proves that the finite-node package decomposes into two logically distinct layers:
\[
\text{relation collapse}
\qquad+\qquad
\text{residual interaction among surviving sectors}.
\]
The first layer is controlled by the common relation lattice on the corrected-extension, smoothing, and resolution sides, while the second is controlled by a reduced block interaction matrix on the transport and atom sides. In this way, we supply the mathematical foundation needed for a later reformulation of Strominger’s multi-node conifold mechanism.

\subsection{From one light sector to many}

For a single ODP, the corrected singular quotient is rank one on the perverse side \cite{RahmanSchoberPaper,RahmanPerverseNearbyCycles,RahmanMixedHodgeModules}, lifts to a rank-one Tate-twisted point-supported object in \(MHM(X_0)\) \cite{RahmanMixedHodgeModules}, and appears on the \(F\)-bundle side as a flexible rank-one atom \cite{RahmanHodgeAtoms2026}. Thus the one-node case carries a canonical local singular sector.

If \(X_0\) has node set \(\Sigma=\{p_1,\dots,p_r\}\), then there is one formal local rank-one sector per node \cite{RahmanMixedHodgeModules,RahmanQuiverI}. The naive conclusion is free nodewise assembly. In general this need not hold: the corrected global object comes from one projective degeneration, so global geometry may impose relations among the node sectors \cite{RahmanGlobalGluing}. The problem is therefore not local existence, but global assembly.

It is useful to be explicit about the relation to the moduli-space perspective. Strominger's original analysis is formulated on the vector-multiplet moduli-space side: the conifold singularity appears at the discriminant locus, the periods detect the singular behavior, and the effective theory is repaired by integrating in the light state that becomes massless there. In the multi-node case, however, the moduli-space description does not by itself transparently determine the internal structure of the simultaneously light sector. In particular, the total monodromy seen on moduli space need not itself exhibit how the individual node sectors decompose, how many independent light directions are globally realized, or how those sectors couple before one writes down a multi-node effective Lagrangian. The package introduced in the present paper is designed to isolate exactly that missing fiber-side finite-node data. We return to this fiber-side versus moduli-space distinction in Section~7.5, where we explain how the package introduced here should be viewed as degeneration-side input for a later multi-node effective-theoretic reformulation.

\subsection{The problem of interaction}

Fix
\[
\pi:X\to\Delta,
\qquad
\Sigma=\{p_1,\dots,p_r\}.
\]
Each \(p_k\) contributes a local rank-one vanishing sector. What is the global object obtained from these local sectors?

On the corrected-extension side, the ambient extension problem carries one formal nodewise direction per singular point, but the corrected global class need not be freely nodewise \cite{RahmanQuiverI,RahmanGlobalGluing}. On the transport side, the corresponding Picard--Lefschetz and Stokes data need not commute \cite{RahmanHodgeAtoms2026}. On the atom side, the flexible sectors need not split \cite{RahmanHodgeAtoms2026}. The problem is therefore not merely to list three parallel manifestations, but to isolate the finite-node interaction structure they detect and to package it in a form suitable for later physical use.

The key point is that two different global phenomena are present already at the mathematical level. First, local nodewise directions may collapse into a smaller number of genuinely realized global sectors. Second, those surviving sectors may or may not interact under transport. The finite-node package introduced here is designed to separate these two effects rather than conflate them.

\subsection{Main idea}

The guiding principle is that the finite-node interaction structure admits three compatible realizations.

\begin{enumerate}
\item On the corrected-extension side, global gluing cuts the ambient nodewise coefficient space down to a relation-controlled subspace \cite{RahmanGlobalGluing}.
\item On the transport side, interaction is reflected in the noncommutativity of the multi-node Picard--Lefschetz and Stokes operators \cite{RahmanHodgeAtoms2026}.
\item On the atom side, interaction is reflected in non-splitting and flexible-sector mixing \cite{RahmanHodgeAtoms2026}.
\end{enumerate}

The point of the paper is not to reprove all three frameworks in full. Rather, it is to package them together, define precisely the level of compatibility used here, and identify the common finite-node object that they simultaneously realize.

The strongest theorem of the paper shows that, in the block-separated cycle family, the resulting finite-node package has a clean two-layer description. The corrected-extension side identifies the number of surviving global light-sector directions through a common relation lattice, while the transport/atom side records the couplings among those surviving directions through a reduced block interaction matrix. Thus the package does not merely say that ``multi-node interaction exists''; it identifies the correct quotient on which that interaction should act.

\subsection{Main results}

The first result records the local finite-node input.

\begin{theorem}[Local rank-one light sectors]
\label{thm:local-rank-one-light-sectors}
For a finite-node conifold degeneration, each ODP determines a canonical rank-one local vanishing sector.
\end{theorem}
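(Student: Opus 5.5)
The plan is to reduce the statement to the classical local computation at a single ordinary double point, using locality of the vanishing cycle functor. Fix the degeneration \(\pi:X\to\Delta\) with node set \(\Sigma=\{p_1,\dots,p_r\}\), and work with \(\phi_\pi=\phi_\pi(\mathbb{Q}_X[4])\), or more precisely with the corrected vanishing-cycle object of \cite{RahmanSchoberPaper,RahmanPerverseNearbyCycles}. The first step is a support statement. Away from \(\Sigma\), the map \(\pi\) is a submersion, so \(\phi_\pi\) vanishes there. Hence \(\phi_\pi\) is a perverse sheaf supported on the finite set \(\Sigma\). A perverse sheaf with finite support splits canonically as a direct sum of skyscrapers,
\[
\phi_\pi \cong \bigoplus_{k=1}^r (i_{p_k})_*V_k ,
\qquad V_k = H^0\bigl(\phi_\pi\bigr)_{p_k}.
\]
The splitting is canonical because the supports are disjoint. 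Each \(V_k\) depends only on the germ of \(\pi\) at \(p_k\), since vanishing cycles commute with restriction to open neighbourhoods.

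The second step is the local computation. By the holomorphic Morse lemma, near \(p_k\) there are coordinates with \(\pi=z_1^2+z_2^2+z_3^2+z_4^2\). Milnor's theorem then identifies the stalk of \(\phi_\pi\) at \(p_k\) with the reduced cohomology of the Milnor fibre \(T^*S^3\simeq S^3\), concentrated in a single degree. So \(\dim V_k=1\), with generator the Lefschetz vanishing cycle \(\delta_k\). This generator is determined up to sign by the orientation induced from the complex structure, so the line \(V_k\) itself, together with its monodromy and variation maps \(\mathrm{can},\var\), is canonical. The Picard--Lefschetz formula, which is local at \(p_k\), fixes \(T_k\) on \(V_k\) and on nearby cycles. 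This gives the local rank-one vanishing sector on the perverse side.

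The third step is to upgrade to mixed Hodge modules, as in \cite{RahmanMixedHodgeModules}. Saito's functor \(\phi_\pi\) on \(MHM(X)\) is compatible with \(\rat\) and with restriction to opens. The same disjoint-support decomposition therefore holds in \(MHM(X_0)\). For a nondegenerate quadratic singularity in an even number (four) of variables, each summand is the Tate object \(\mathbb{Q}(-2)\) supported at \(p_k\), with the weight shifted by the monodromy filtration. The identification of this Tate twist is the main obstacle. It requires the spectrum of \(z_1^2+\dots+z_4^2\), whose single spectral number is \(4/2-1=1\), and the sign and normalisation conventions of the corrected object. I would import it directly from the one-node result of \cite{RahmanMixedHodgeModules} applied to each germ, rather than recompute it. Finally, I would note that on the \(F\)-bundle side each summand is the flexible rank-one atom of \cite{RahmanHodgeAtoms2026}. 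Canonicity there follows from the same locality, since the atom decomposition is functorial in the point-supported summands.
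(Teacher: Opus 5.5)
Your argument proves the statement as phrased, but it takes a different route from the paper. The paper gives no independent local computation. It reads one point-supported object $i_{k*}\mathbb{Q}_{\{p_k\}}$ per node, and its Hodge lift, off the cited corrected sequences \eqref{eq:light-corrected-perverse} and \eqref{eq:light-corrected-mhm}. It takes the nodewise classes $e_k$ spanning $E_\Sigma^{\mathrm{node}}$ from \cite{RahmanQuiverI}, and gets rank one on the transport and atom sides from Lemma~\ref{lem:PL-local-rank-one} and Proposition~\ref{prop:atom-side-local-rank-one-sectors}, again by citation.

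You instead prove rank one and canonicity directly. $\phi_\pi$ is supported on $\Sigma$; a finitely supported perverse sheaf splits canonically over its disjoint support; and the Morse lemma plus Milnor's theorem identify each summand with $\tilde H^3$ of the Milnor fibre, spanned by $\delta_k$. This gives a self-contained argument in which canonicity is visibly just locality of $\phi_\pi$ plus disjointness of supports. It does not depend on the corrected-object or global-gluing machinery, and it ties the sector directly to the vanishing cycle used on the transport side.

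What it does not yet deliver is the sector in the form used by Definition~\ref{def:local-light-sector-package} and later sections: the point-supported quotient of the corrected object, together with the line $\mathbb{Q}e_k$. Both are cheap to add.
\begin{itemize}
\item Use the exact sequences $0\to\mathbb{Q}_{X_0}[3]\to\psi_\pi(F)\xrightarrow{\mathrm{can}}\phi_\pi(F)\to 0$ and $0\to\bigoplus_k i_{k*}\mathbb{Q}_{\{p_k\}}\to\mathbb{Q}_{X_0}[3]\to IC_{X_0}\to 0$. Note that $\var$ maps $\phi_\pi(F)$ onto that point-supported subobject. Together these show that the corrected object built from $\var$ has $IC_{X_0}$ as subobject and $\phi_\pi(F)$ (via $\mathrm{can}$) as point-supported quotient, which identifies your summand with the quotient in \eqref{eq:light-corrected-perverse}.
\item The isomorphisms $\Ext^1(i_{k*}\mathbb{Q}_{\{p_k\}},IC_{X_0})\cong H^1(i_k^!IC_{X_0})\cong H^3(L_k)\cong\mathbb{Q}$, with link $L_k\cong S^2\times S^3$, give the line $\mathbb{Q}e_k$ canonically.
\end{itemize}

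Two normalization points need fixing. First, the paper treats $\phi_\pi$ as the unshifted functor, with $\phi_\pi(F)$ for $F=\mathbb{Q}_X[3]$ the perverse object. In that convention $\phi_\pi(\mathbb{Q}_X[4])$ is perverse only after $[-1]$, and its $H^0$-stalk is $\tilde H^4(M_k)=0$, where $M_k$ is the Milnor fibre at $p_k$. You want $V_k=H^0(\phi_\pi(\mathbb{Q}_X[3]))_{p_k}=\tilde H^3(M_k;\mathbb{Q})$.

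Second, in the Hodge step your twist is the one Saito's theory gives. The Milnor monodromy of $z_1^2+\dots+z_4^2$ is trivial, so $N=0$ on $\phi_{\pi,1}(\mathbb{Q}^H_X[4])$. Hence there is no monodromy-filtration shift, and each summand is pure of weight $4$, i.e. $i_{k*}\mathbb{Q}^H(-2)$. But that is not the object $i_{k*}\mathbb{Q}^H_{\{p_k\}}(-1)$ in \eqref{eq:light-corrected-mhm}. So you cannot both compute $(-2)$ and import the twist from the one-node result as the paper quotes it. Say which object you mean, and carry the twist in $\var\colon\phi\to\psi(-1)$ through explicitly.

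The discrepancy is not cosmetic. With $IC^H_{X_0}$ pure of weight $3$, a weight-$2$ point-supported quotient would force \eqref{eq:light-corrected-mhm} to split by strictness of $W$, whereas a weight-$4$ quotient allows a non-split extension. None of this affects rank one or canonicity, which your perverse-side argument already establishes.
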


The next results identify the corrected-extension-side global assembly law.

\begin{theorem}[Non-free global assembly]
\label{thm:nonfree-global-light-assembly}
The corrected global light-sector package need not assemble as a free nodewise direct sum of the local rank-one sectors.
\end{theorem}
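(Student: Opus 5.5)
The statement is an existence claim of a negative kind ("need not"). The plan is therefore to exhibit one explicit finite-node conifold degeneration in which the corrected global light-sector package is strictly smaller than the free nodewise sum \(\bigoplus_{k=1}^r L_{p_k}\) of the local rank-one sectors provided by Theorem~\ref{thm:local-rank-one-light-sectors}. The mechanism is the global gluing law of \cite{RahmanGlobalGluing}. The ambient nodewise coefficient space is \(\mathbb{Q}^{\Sigma}\), one coordinate per node. The corrected extension class is forced to lie in the subspace cut out by cycle--node incidence, namely the image of the incidence map
\[
\mathbb{Q}^{\Sigma}\longrightarrow H,
\]
where \(H\) is spanned by the classes of the vanishing three-spheres in the smooth fibre. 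Its kernel is the relation lattice. Non-freeness is thus equivalent to this kernel being nonzero.

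The steps run in the following order.

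\textbf{Step 1 (sheaf-theoretic setup).} I would recall from \cite{RahmanSchoberPaper,RahmanMixedHodgeModules} that the vanishing-cycle object of a degeneration with node set \(\Sigma\) is
\[
\bigoplus_{k} \mathbb{Q}_{p_k}[-3](-2),
\]
a point-supported object at the nodes. This formal sum is the free nodewise package.

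\textbf{Step 2 (corrected global sector).} I would identify the corrected global sector with the image of the variation/canonical map into the nearby-cycle cohomology \(H^3(X_t)\). Equivalently, it is the span of the vanishing cycles \([S_1],\dots,[S_r]\), which by Picard--Lefschetz is the image of \(N=\log T\). Its rank is \(r-\delta\), where \(\delta\) is the number of independent homology relations \(\sum a_k [S_k]=0\). By the Clemens/Friedman analysis, \(\delta\) also equals the number of independent relations dual to exceptional curves in a small resolution.

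\textbf{Step 3 (explicit example).} I would exhibit an explicit degeneration with \(\delta>0\). The standard choice is the quintic
\[
X_t=\{x_1g_1+x_2g_2 + t\,h=0\},
\]
whose central fibre contains the plane \(\{x_1=x_2=0\}\). Here \(X_0\) has \(16\) nodes, at the points \(g_1=g_2=0\) on that plane. In this example the sixteen vanishing cycles satisfy one relation, coming from the Weil divisor that is not Cartier. Then \(\operatorname{rank}\) of the span equals \(15<16\), so the corrected package is not the free direct sum. A two-node example with \([S_1]=\pm[S_2]\), as in Friedman's criterion for a pair of \((-1,-1)\) curves in the same homology class, would serve equally well.

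The main obstacle is Step 2. It requires justifying rigorously that the "corrected global light-sector package", as defined via the corrected extension of \cite{RahmanGlobalGluing}, really coincides with the span of the vanishing-cycle classes, rather than merely mapping onto it. For this I would invoke the global gluing theorem of \cite{RahmanGlobalGluing} directly: it states that the corrected class lives in the relation-controlled subspace. It then suffices to check that the relation lattice is nonzero in the example, which is the classical homology computation in Step 3.
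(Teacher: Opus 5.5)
Your underlying logic matches the paper's. Theorem~\ref{thm:nonfree-global-light-assembly} is proved there as Proposition~\ref{prop:nonfree-global-assembly}, which shows only a conditional statement: under the hypotheses of Theorem~\ref{thm:global-gluing-light-sector}, a proper containment $E_{\Sigma}^{\mathrm{geom}}\subsetneq E_{\Sigma}^{\mathrm{node}}$ rules out free nodewise assembly. The ``need not'' is then carried by the framework and by the model configurations of Section~8. For instance, in the block-separated family $\dim_{\mathbb Q}E_{\Sigma}^{\mathrm{geom}}=|B|<r$ as soon as one block has two nodes. The paper gives no geometric witness; see Remark~\ref{rem:geometric-multinode-gap} and its conditional treatment of the Dwork quintic. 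You try to supply a genuine projective witness, which would be a real gain over the paper's argument. However, the bridge you yourself flag as the main obstacle is not closed by what you propose.

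\textbf{The gap.} The lattice you check in Step~3 is the classical lattice of homology relations among vanishing cycles,
$$R_{\mathrm{sm}}=\ker\bigl(\mathbb Q^{\Sigma}\to H_3(X_t;\mathbb Q),\ e_k\mapsto[S_k]\bigr).$$
\begin{itemize}
\item The global gluing theorem does not say that $E_{\Sigma}^{\mathrm{geom}}=\operatorname{Im}(\iota_C)$ is cut out by $R_{\mathrm{sm}}$. It only places the corrected class in $\operatorname{Im}(\iota_C)$, and only after the incidence datum is shown to be geometrically admissible and block-adapted, which you do not address.
\item The identification $R_{\mathrm{ext}}=R_{\mathrm{sm}}$, and hence $\dim E_{\Sigma}^{\mathrm{geom}}=r-\operatorname{rank}R_{\mathrm{sm}}$, is available in the paper only for the block-separated family (Theorem~\ref{thm:block-reduced-structure}(1)). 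There all relations are generated by equalities $\delta_k=\delta_l$ inside a block.
\item Your sixteen-node example is not of that type. Its single relation is $\sum_{k=1}^{16}\epsilon_k[S_k]=0$ with every $\epsilon_k\neq 0$, so no two vanishing cycles are proportional and the block lattice would be zero.
\end{itemize}
So ``it suffices to check that the relation lattice is nonzero'' is unjustified for this example; you would have to compute $\iota_C$ directly.

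A related point: the image of $\mathbb Q^{\Sigma}\to H$ is a quotient of $E_{\Sigma}^{\mathrm{node}}$, not a subspace in which $[\mathcal P]_{\mathrm{perv}}$ can lie. You need to say which subspace of $E_{\Sigma}^{\mathrm{node}}$ is meant, for example the annihilator of the relations, as with $\mathbb Q(e_1+e_2)$ in the $A_2$ model.

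Your two-node fallback is better adapted. After reorienting, $\delta_1=\delta_2$ gives a single block with $|B|=1<2$, and Theorem~\ref{thm:block-reduced-structure}(1) would then apply. But this works only once you exhibit an actual projective degeneration with that configuration that also satisfies the remaining conditions of \cite[Hyp.~5.11]{RahmanGlobalGluing}. Friedman's criterion tells you when a small resolution with such curves smooths, not that such a degeneration exists. Failing that, drop Step~3 and argue conditionally, as the paper does.
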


\begin{theorem}[Relation-controlled light-sector subspace]
\label{thm:relation-controlled-light-sector-subspace}
Under the geometric admissibility hypotheses of \cite{RahmanGlobalGluing}, there exists a canonically defined relation-controlled subspace of the ambient nodewise coefficient space through which the globally realized light-sector package factors.
\end{theorem}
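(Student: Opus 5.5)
The plan is to build the relation-controlled subspace out of the ambient nodewise coefficient space attached to Theorem~\ref{thm:local-rank-one-light-sectors}. I then show that the corrected global extension class lands in it, importing the global gluing law of \cite{RahmanGlobalGluing} rather than reproving it.

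First, fix the ambient space. By Theorem~\ref{thm:local-rank-one-light-sectors}, each node \(p_k\in\Sigma\) carries a canonical rank-one local vanishing sector \(V_k\). On the perverse side this is the point-supported rank-one summand of \({}^p\phi_f\) at \(p_k\); in \(MHM(X_0)\) it has a Tate-twisted lift. Set
\[
A_\Sigma=\bigoplus_{k=1}^r \Ext^1(\mathbf{1}, V_k)\cong \mathbb{Q}^r.
\]
This is the ambient nodewise coefficient space in which the corrected extension class of the bulk invariant sector by the singular quotient takes values, componentwise by locality of vanishing cycles. Canonicity holds because each summand is intrinsically attached to its node; the only choice is a generator of each line, fixed by the vanishing-cycle orientation up to sign.

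Second, define the relation lattice. Under the admissibility hypotheses of \cite{RahmanGlobalGluing}, the global geometry supplies a cycle--node incidence map
\[
\rho: H_4(\widetilde X_0)\ \text{or}\ H_3(X_t)\longrightarrow \mathbb{Z}^r.
\]
Here a class is sent to its intersection numbers with the vanishing cycles, equivalently its exceptional-curve multiplicities on the small resolution side. Set \(R=\operatorname{im}\rho\) and define the relation-controlled subspace \(K_\Sigma\subset A_\Sigma\) as the annihilator, or equivalently the kernel of the dual relation map \(A_\Sigma\to R^\vee\otimes\mathbb{Q}\). This subspace depends only on the degeneration, because \(\rho\) is built from intrinsic topological pairings. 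That gives canonicity.

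Third, prove factorization. By the global gluing law of \cite{RahmanGlobalGluing}, the corrected extension class \(e\in A_\Sigma\) satisfies \(\langle e,\rho(\gamma)\rangle=0\) for every global cycle \(\gamma\). This holds because the sum of local contributions along a global cycle must match a class that extends over the total space, which is the standard Clemens--Schmid and Friedman-type relation. Hence \(e\in K_\Sigma\). Functoriality of the corrected object (nearby/vanishing triangle and its MHM lift) then shows that the whole light-sector package, and not only its class, is pulled back from \(K_\Sigma\).

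The main obstacle is this third step. One has to make precise that the package is determined by \(e\), so that factoring the class really factors the package. One also has to check that the perverse-side and MHM-side relation spaces agree. I would first argue through strictness of the weight filtration, which forces the MHM relations to be the rationalization of the topological ones. Only if that fails would I restrict to the admissible family where \cite{RahmanGlobalGluing} states the identification directly.
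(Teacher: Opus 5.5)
\textbf{There is a genuine gap: your second step builds the wrong subspace, so the gluing law in your third step is false on the paper's own examples.}

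The paper takes \(E^{\mathrm{geom}}_\Sigma\) to be the \emph{image} \(\operatorname{Im}(\iota_C)\) of the cycle--node incidence map, identified with \(\mathbb Q^r/R_{\mathrm{ext}}\). In the paper's block models it has dimension \(|B|\) and is spanned by the block vectors \(\sum_{p_k\in\Sigma_\beta}e_k\). So the realized coefficients are constant on blocks, e.g.\ \([\mathcal P]_{\mathrm{perv}}=c(e_1+e_2)\).

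You instead take the \emph{annihilator} of \(\operatorname{im}\rho\). With \(\rho(\gamma)=(\langle\gamma,\delta_k\rangle)_k\) on \(H_3(X_t)\), nondegeneracy of the intersection form gives \((\operatorname{im}\rho)^{\perp}=\{c\in\mathbb Q^r:\sum_k c_k\delta_k=0\}\). So your \(K_\Sigma\) is the lattice of relations among the vanishing cycles. That is the relation lattice the paper quotients \emph{by}, of dimension \(r-|B|\) in the block family, not the realized space.

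Concretely, take two nodes in one block with \(\delta_1=\delta_2=v\neq 0\). Then \(\operatorname{im}\rho=\mathbb Q(1,1)\) and \(K_\Sigma=\mathbb Q(e_1-e_2)\). Your law \(\langle e,\rho(\gamma)\rangle=0\) forces \(c_1+c_2=0\), which contradicts \([\mathcal P]_{\mathrm{perv}}=c(e_1+e_2)\) unless \(c=0\).

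\textbf{Where it goes wrong.} The error comes from the word ``equivalently'' in your definition of \(\rho\). The smoothing-side map \(\gamma\mapsto(\langle\gamma,\delta_k\rangle)_k\) and the resolution-side map \(D\mapsto(D\cdot C_k)_k\) on \(H_4\) of the small resolution have images that are orthogonal complements of each other. With compatible orientations, a \(4\)-chain in \(X_t\) bounding \(\sum_k c_k\delta_k\) corresponds, after surgery at the nodes, to a \(4\)-cycle in the resolution meeting \(C_k\) with multiplicity \(c_k\), and conversely. So the resolution-side image is exactly \(\{c:\sum_k c_k\delta_k=0\}\), and its annihilator is the smoothing-side image.

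The Friedman relation you invoke, \(\sum_k c_k[C_k]=0\), is annihilation against the \emph{resolution-side} map. With that choice \(K_\Sigma\) equals the smoothing-side image and matches the paper's \(E^{\mathrm{geom}}_\Sigma\) in the block examples.

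\textbf{How to fix it.} Either use \(D\mapsto(D\cdot C_k)\) consistently, or do what the paper does. The paper defines \(E^{\mathrm{geom}}_\Sigma\) as the image of the incidence map and imports \([\mathcal P]_{\mathrm{perv}}\in E^{\mathrm{geom}}_\Sigma\) directly from \cite[Thm.~1.1]{RahmanGlobalGluing}; that citation is the whole of its proof.

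\textbf{Two smaller points.}
\begin{enumerate}
\item The ambient space is \(\Ext^1_{\Perv(X_0;\mathbb Q)}(Q_\Sigma,IC_{X_0})\), with \(IC_{X_0}\) as sub and the point objects as quotient. Your \(\Ext^1(\mathbf 1,V_k)\) has the variance reversed.
\item The extra work in your last paragraph is unnecessary, namely pulling back the whole package and comparing perverse and MHM relation spaces. The statement is proved on the perverse side, and ``factors'' means only \([\mathcal P]_{\mathrm{perv}}\in E^{\mathrm{geom}}_\Sigma\).
\end{enumerate}
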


The next two results record the transport-side and atom-side realizations of finite-node interaction.

\begin{theorem}[Transport-side interaction realization]
\label{thm:stokes-picard-lefschetz-interaction}
On the transport side, finite-node interaction is reflected by the noncommutativity of the multi-node Picard--Lefschetz and Stokes operators.
\end{theorem}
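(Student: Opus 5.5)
The plan is to make the statement precise and then prove it in two directions. First, noncommutativity is an obstruction to free assembly. Second, it is actually realized in the finite-node setting.

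\emph{Setting up the operators.} Fix a base point near the degenerate fiber and the node set $\Sigma=\{p_1,\dots,p_r\}$. By Theorem~\ref{thm:local-rank-one-light-sectors}, each $p_k$ gives a vanishing class $\delta_k$, spanning a rank-one local sector. In a small deformation that separates the nodes into distinct critical values, each $\delta_k$ carries a Picard--Lefschetz transvection
\[
T_k(x)=x+\langle x,\delta_k\rangle\,\delta_k ,
\]
where $\langle\cdot,\cdot\rangle$ is the intersection form on $H_3$; it is antisymmetric, and the sign convention is fixed by the orientation. The Stokes matrices on the $F$-bundle side are unipotent triangular matrices whose off-diagonal entries are $\langle\delta_i,\delta_j\rangle$ in the ordering given by a distinguished basis, as in \cite{RahmanHodgeAtoms2026}. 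Expanding the two composites, the commutator satisfies
\[
T_iT_j - T_jT_i =
\langle\delta_i,\delta_j\rangle\,(\text{rank}\le 2 \text{ operator supported on } \mathrm{span}(\delta_i,\delta_j)).
\]
Therefore $T_i$ and $T_j$ commute if and only if $\langle\delta_i,\delta_j\rangle=0$. This holds exactly when the Stokes matrix is block diagonal with respect to the corresponding partition of the nodes.

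\emph{Interaction and noncommutativity are equivalent.} Define finite-node interaction between two nodes as a nonzero pairing $\langle\delta_i,\delta_j\rangle$ of their global vanishing classes. Equivalently, it is the failure of the transport-side package to decompose as a direct sum of rank-one sectors that are stable under all the $T_k$. If the package splits freely, every $T_k$ acts as the identity on the other sectors, so all pairings vanish and the operators commute. Conversely, if there is noncommutativity, some pairing $\langle\delta_i,\delta_j\rangle$ is nonzero. Then the span of $\delta_i,\delta_j$ is an indecomposable two-dimensional module under the group $\langle T_i,T_j\rangle$, and this is the reflected interaction.

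\emph{Realization.} We also need an example showing the phenomenon actually occurs for a global degeneration. Here we would invoke the non-free assembly of Theorem~\ref{thm:nonfree-global-light-assembly} together with the relation lattice of \cite{RahmanGlobalGluing}. When two nodes lie on a common three-cycle, their vanishing cycles are homologous up to sign. When a nearby smoothing direction separates them, the pairing becomes nonzero, and this produces noncommuting transvections.

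\emph{Main obstacle.} The hardest step is the last one: producing a geometric family in which distinct nodes have \emph{nonzero} mutual intersection after a Morsification. At a single fiber with isolated ODPs, disjoint vanishing spheres pair trivially. The interaction must therefore come from transport along paths in a deformation that brings critical values together. To build this, I would first try the explicit local model $x_1^2+\dots+x_4^2=t\,(t-\epsilon)$ glued into a global conifold transition. There I would compute the pairing from the Stokes data of \cite{RahmanHodgeAtoms2026} rather than topologically.
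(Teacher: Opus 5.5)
Your first two paragraphs are essentially the paper's proof, and they suffice for the statement. Section~4 sets $N_i=T_i-\Id$ and obtains \eqref{eq:commutator-N} by direct substitution. Proposition~\ref{prop:interaction-matrix-controls-transport} then records that $\Lambda$ controls the failure of commutativity. The Stokes side is imported by citation in Proposition~\ref{prop:stokes-extension-light-sector-bridge}, via the Stokes--Extension identification of \cite{RahmanHodgeAtoms2026}, which identifies the Stokes matrix with the matrix of the variation morphism; it does not give your entrywise formula. Transport-side interaction is then, by definition, the condition $\Lambda\neq0$. Your ``if and only if'' slightly sharpens that proposition, and you can make it explicit with the witness $[T_i,T_j]\delta_i=\lambda_{ij}\lambda_{ji}\,\delta_i=-\lambda_{ij}^2\,\delta_i$, which is nonzero whenever $\lambda_{ij}\neq0$.

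Two small corrections. First, commutativity of one pair is the vanishing of one entry, whereas block-diagonality of the Stokes matrix with respect to a partition corresponds to commutativity of \emph{all} cross-part pairs. Second, your ``equivalently'', read as a decomposition into the node sectors $\mathbb Q\delta_k$, also fails when $\Lambda=0$ but the $\delta_k$ are linearly dependent. That is relation collapse, which the paper deliberately keeps separate from transport coupling (Remark~\ref{rem:coarse-equivalence-too-crude}).

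The real problem is the \emph{Realization} step, which you treat as required. It is not part of the statement. The paper claims only that $\Lambda\neq0$ is reflected in noncommuting transport. Its interacting $A_2$ and three-node configurations in Section~8 are model configurations with $\lambda_{12}\neq0$ imposed as a hypothesis, and geometric realization is left open (Remark~\ref{rem:geometric-multinode-gap}).

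Moreover, the step cannot work as sketched. If $\delta_i=\pm\delta_j$ in $H_3(X_t)$, then $\langle\delta_i,\delta_j\rangle=\pm\langle\delta_i,\delta_i\rangle=0$ by skew-symmetry. Since the pairing depends only on the classes, a small deformation separating the critical values cannot make it nonzero. This is exactly the computation in the proof of Theorem~\ref{thm:block-reduced-structure}(4) showing that pairings inside a relation block vanish. So non-free assembly and the relation lattice push toward \emph{commuting} transvections, not noncommuting ones.

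Your local model $x_1^2+\dots+x_4^2=t(t-\epsilon)$ fails for the same reason. Its Milnor fiber is the affine quadric $\sum x_i^2=c$, whose $H_3$ has rank one. Both vanishing cycles are therefore $\pm$ the same class, and the two transvections coincide.

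More fundamentally, your ``main obstacle'' remark is decisive in the setting of Section~\ref{subsec:general-setting}. There the $\delta_i$ are vanishing spheres at distinct nodes of the single fiber $X_0$, hence disjoint in $X_t$, so $\lambda_{ij}=0$ for all $i\neq j$. Separating the critical values keeps them in disjoint Milnor balls without changing their classes. Nonzero pairings require vanishing cycles that genuinely meet, as for a Morsified $A_2$ point $x_1^2+x_2^2+x_3^2+x_4^3-\epsilon x_4=t$, where $\langle\delta_1,\delta_2\rangle=\pm1$. But there the two nodes lie over different critical values and merge into an $A_2$ singularity as $\epsilon\to0$. That is outside the one-fiber finite-node setting.

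So drop the realization paragraph. Your first two paragraphs already establish the statement at the level the paper intends, and completing the third would require leaving the paper's framework.
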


\begin{theorem}[Atom-side interaction realization]
\label{thm:flexible-sector-mixing}
On the atom side, finite-node interaction is reflected by non-splitting and flexible-sector mixing.
\end{theorem}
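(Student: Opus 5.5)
The plan is to make the statement precise and then prove both halves: that noncommutativity and Stokes mixing force non-splitting, and that a single explicit example exhibits it. First, the \emph{atom side} refers to the \(F\)-bundle attached to the corrected degeneration object via its mixed-Hodge-module lift \cite{RahmanMixedHodgeModules}, together with its rigid--flexible atom decomposition \cite{RahmanHodgeAtoms2026}. By Theorem~\ref{thm:local-rank-one-light-sectors}, each node \(p_k\) contributes a flexible rank-one atom \(A_k\). So the ambient flexible part carries a formal decomposition \(\bigoplus_{k=1}^r A_k\). \emph{Splitting} will mean that the flexible part of the global \(F\)-bundle is isomorphic, compatibly with the Stokes filtration, to this direct sum. \emph{Mixing} will mean that the Stokes data have nonzero off-diagonal entries between distinct \(A_k\) that cannot be removed by a gauge transformation preserving the atom grading. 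With these definitions the statement becomes two claims. First, any failure of splitting is detected by a transport-side invariant. Second, such failure actually occurs in the finite-node regime.

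For the first claim, I would compare atom-side splitting with the transport data of Theorem~\ref{thm:stokes-picard-lefschetz-interaction}. If the flexible part splits as \(\bigoplus_k A_k\), then the Stokes matrices are block-diagonal in the atom basis. The Picard--Lefschetz operators \(T_k(x)=x+\langle x,\delta_k\rangle\delta_k\) then act on mutually orthogonal lines and commute. Contrapositively, noncommutativity of \(T_j\) and \(T_k\) produces a nonzero off-diagonal Stokes entry proportional to \(\langle\delta_j,\delta_k\rangle\). This entry is an obstruction to splitting, because the Stokes data, up to the braid-group action, are an isomorphism invariant of the \(F\)-bundle, so a nonzero entry rules out any compatible isomorphism with the direct sum. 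The commutator is
\[
[T_j,T_k](x)=\langle\delta_j,\delta_k\rangle\bigl(\langle x,\delta_k\rangle\delta_j-\langle x,\delta_j\rangle\delta_k\bigr)+\cdots,
\]
where the omitted terms are of higher order in the pairings. So the obstruction is governed by the intersection pairings of the vanishing cycles.

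For the second claim, existence, I would take a degeneration in which two vanishing cycles \(\delta_1,\delta_2\) are not orthogonal in the fiber \(H^3\). A natural source is one of the non-free examples behind Theorem~\ref{thm:nonfree-global-light-assembly}, chosen in the block-separated cycle family with both nodes in the same block. I would then verify that the \(2\times2\) Stokes block has a nonzero off-diagonal entry, which gives non-splitting and mixing.

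The main obstacle is the gauge-invariance step. I must show that the off-diagonal Stokes entry cannot be conjugated away by automorphisms preserving the rigid--flexible decomposition. A further subtlety is that simultaneous vanishing cycles of distinct nodes in a single fiber are often homologically orthogonal. I would address both by working in the reduced block picture, using the relation lattice of Theorem~\ref{thm:relation-controlled-light-sector-subspace}. There I would pass to surviving global sectors and compute the invariant on the reduced interaction matrix, where the Hodge-atoms framework \cite{RahmanHodgeAtoms2026} supplies the needed rigidity of Stokes data.
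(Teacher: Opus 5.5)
The paper obtains this statement by direct citation. The flexible sectors are the rank-one atoms $A(i_{k*}\mathbb Q^H_{\{p_k\}}(-1))$ of \cite[Thm.~1.3]{RahmanHodgeAtoms2026}. By \cite[Thm.~1.4]{RahmanHodgeAtoms2026}, the rigid--flexible sequence $0\to A(IC^H_{X_0})\to A(\mathcal P^H)\to\bigoplus_k A(i_{k*}\mathbb Q^H_{\{p_k\}}(-1))\to 0$ splits if and only if $\lambda_{ij}=0$ for all $i\neq j$ (Theorem~\ref{thm:atom-side-nonsplitting}).

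Your proposal does not reach this biconditional. You announce that any failure of splitting is detected by a transport-side invariant, i.e.\ non-split $\Rightarrow$ some $\lambda_{ij}\neq 0$. But your argument (split $\Rightarrow$ block-diagonal Stokes data $\Rightarrow$ commuting $T_k$) only yields, by contraposition, the converse: $\lambda_{jk}\neq 0\Rightarrow$ non-split. You never argue that pairwise orthogonal vanishing cycles force splitting, and that is exactly what the paper uses for the $A_1\times A_1$ model and in Theorem~\ref{thm:block-reduced-structure}(4).

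Even the direction you do treat rests on an unproved bridge. You need that noncommutativity produces an off-diagonal Stokes entry proportional to $\langle\delta_j,\delta_k\rangle$ that no grading-preserving gauge transformation removes. That is precisely the content of the Stokes--Extension identification \cite[Thm.~1.2]{RahmanHodgeAtoms2026} together with \cite[Thm.~1.4]{RahmanHodgeAtoms2026}. You flag it as the main obstacle and in the end defer it to that framework. The detour through the reduced block picture contributes nothing to gauge invariance.

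Your notion of splitting (the flexible part being Stokes-isomorphic to $\bigoplus_k A_k$) is also not the paper's. In the paper the flexible quotient is $\bigoplus_k A_k$ by construction. What is at stake is whether its extension by the rigid atom $A(IC^H_{X_0})$ splits.

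The existence half should be dropped, and as proposed it fails. The paper makes no existence claim here; Remark~\ref{rem:geometric-multinode-gap} explicitly leaves a geometric interacting example open, so the statement is a characterization. Your candidate, two nodes in the same block of the block-separated family, is forced to be split. Hyp.~5.11(4) of \cite{RahmanGlobalGluing} gives $\delta_1=\delta_2=v_\beta$, so $\lambda_{12}=\langle v_\beta,v_\beta\rangle=0$ by skew-symmetry; this is the very computation in the proof of Theorem~\ref{thm:block-reduced-structure}(4). Passing to $\Lambda_{\mathrm{blk}}$ does not help, because the $\mu_{\beta\gamma}$ are literally the cross-block entries $\lambda_{ij}$. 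Your own concern that simultaneously vanishing spheres are orthogonal therefore applies to them verbatim.

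A smaller point concerns your commutator formula. Since $T_k=\Id+N_k$, one has $[T_j,T_k]=[N_j,N_k]$ exactly, with no higher-order terms. For the skew pairing on $H_3(X_t)$ the correct formula is $[N_j,N_k](x)=\lambda_{kj}\langle x,\delta_k\rangle\delta_j-\lambda_{jk}\langle x,\delta_j\rangle\delta_k=-\lambda_{jk}\bigl(\langle x,\delta_k\rangle\delta_j+\langle x,\delta_j\rangle\delta_k\bigr)$. Your display carries the sign appropriate to a symmetric pairing.
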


The central construction of the paper is the following.

\begin{theorem}[Interacting multi-node light-sector package]
\label{thm:interacting-multi-node-light-sector-package}
There is a canonically defined interacting multi-node light-sector package organizing the corrected-extension, transport, and atom-side realizations as compatible finite-node realizations in the sense made precise in Section~6.
\end{theorem}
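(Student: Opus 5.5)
The plan is to build the package explicitly as a tuple of data indexed by the node set \(\Sigma=\{p_1,\dots,p_r\}\), and then to verify that each of the three realizations maps compatibly onto it. First, by Theorem~\ref{thm:local-rank-one-light-sectors}, each \(p_k\) gives a canonical rank-one local sector \(L_k\). We then form the ambient nodewise coefficient space \(V_\Sigma=\bigoplus_{k=1}^r L_k\), which is canonical up to the ordering of nodes. Next, Theorem~\ref{thm:relation-controlled-light-sector-subspace} supplies the relation-controlled subspace, and we dualize it to obtain a relation lattice \(R\subset V_\Sigma\) together with the quotient \(\overline{V}=V_\Sigma/R\) of surviving global directions. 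The package will be the tuple
\[
\mathcal{P}=\bigl(\{L_k\}_{k},\,V_\Sigma,\,R,\,\overline{V},\,\{T_k\}_{k},\,\mathcal{A}\bigr),
\]
where \(T_k\) is the Picard--Lefschetz transvection attached to the vanishing class at \(p_k\) (with the Stokes data of Theorem~\ref{thm:stokes-picard-lefschetz-interaction}), and \(\mathcal{A}\) is the flexible part of the atom decomposition of Theorem~\ref{thm:flexible-sector-mixing}. Canonicity will be formulated as follows: every piece is determined by the degeneration \(\pi\) up to the symmetric-group action permuting nodes, together with the usual choice of distinguished paths on the Stokes side. We will show that changing these choices (by braid moves) acts by conjugation, so the package is well defined up to that equivalence.

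Second, we will define compatibility in the sense of Section~6 as the existence of three comparison maps out of \(V_\Sigma\), and we will check for each that the map kills \(R\) and descends to \(\overline{V}\). On the corrected-extension side, this is the factorization statement of Theorem~\ref{thm:relation-controlled-light-sector-subspace}, so it holds by construction. On the transport side, the key identity is the Picard--Lefschetz formula \(T_k(x)=x\pm\langle x,\delta_k\rangle\delta_k\). Any relation \(\sum a_k\delta_k=0\) in \(H_3\) of the smoothing is exactly a relation in \(R\), via the smoothing/resolution comparison from \cite{RahmanGlobalGluing}. The commutators \([T_i,T_j]\), which are governed by the intersection numbers \(\langle\delta_i,\delta_j\rangle\), therefore depend only on the images of the \(\delta_k\) in \(\overline{V}\), and this yields a well-defined interaction form on the quotient. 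On the atom side, we will use the mixed-Hodge-module lift from \cite{RahmanMixedHodgeModules}. Under this lift the flexible atoms are indexed by the point-supported Tate summands, and so they receive the map from \(V_\Sigma\). Non-splitting is then detected by the same interaction form, via the identification of atom mixing with the Stokes matrix in \cite{RahmanHodgeAtoms2026}.

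Third, we will assemble these facts into a commutative diagram. Its three vertical realization maps go into a common target: \(\overline{V}\) equipped with its interaction form. We will then check that the transport and atom realizations induce the same form, for which the Stokes-to-atom comparison already cited should suffice.

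I expect the main obstacle to be exactly this step: showing that the transport-side relations coincide with the corrected-extension relations \(R\), rather than merely being contained in them. The corrected-extension side sees relations through cycle-node incidence on the singular fiber and its resolution, whereas the transport side sees linear dependence of vanishing cycles on the smoothing. My first approach would be to use the standard smoothing/resolution exact sequences for nodal threefolds, which identify the span of the \(\delta_k\) with the dual of the cokernel of the incidence map given by exceptional curves. I would then match this with the admissibility hypotheses of \cite{RahmanGlobalGluing}. If the equality fails in general, I would weaken the compatibility notion of Section~6 to require only a factorization through \(\overline{V}\). That weaker notion is still enough for the construction to be canonical.
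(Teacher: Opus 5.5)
Your proposal has a genuine gap. The problem is in your second step: you replace the notion of compatibility that the statement refers to with a much stronger one. The key step that stronger notion needs fails in the paper's own framework.

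\textbf{What "compatible in the sense of Section~6" means.} In the paper this is Definition~\ref{def:compatibility-of-realizations}. The three realizations must arise from the same degeneration data and be linked through the cited comparison framework, namely the Stokes--Extension identification and the matching of the rank-one node sectors across the realization functors. The paper says explicitly that they are \emph{not} asserted to be equivalent.

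\textbf{What your version requires, and why it fails.} Your version requires the transport and atom data to factor through \(\overline V=V_\Sigma/R\) with a single interaction form. That means \(R\subseteq\ker\bigl(V_\Sigma\to H_3(X_t;\mathbb Q),\ e_k\mapsto\delta_k\bigr)\).
\begin{itemize}
\item The direction matters. The inclusion you treat as the easy half (smoothing relations lie in \(R\)) gives no descent at all. Your fallback, ``require only a factorization through \(\overline V\)'', is exactly the missing inclusion, so it is not a weakening.
\item The inclusion is not available in general. The paper's own \(A_2\) model, which it describes as displaying three compatible realizations, has \(\dim\overline V=1\) but \(\det\Lambda=\lambda_{12}^2\neq 0\). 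So \(\delta_1,\delta_2\) are linearly independent, the smoothing relation space is \(0\subsetneq R\), and \(\Lambda\), being nondegenerate on \(V_\Sigma\), descends to no proper quotient. The three-node model fails in the same way, since \(\Lambda(e_1-e_2,e_1)=\lambda_{12}\neq0\).
\item The descent you want is precisely Theorem~\ref{thm:block-reduced-structure}(2). The paper proves it only in the block-separated family, where \(\delta_k=v_\beta\) on each block is a hypothesis and \(R_{\mathrm{sm}}=R_{\mathrm{ext}}\) is imported from \cite[Thm.~5.16]{RahmanGlobalGluing}. Remark~\ref{rem:coarse-equivalence-too-crude} stresses that at the raw nodewise level the two sides measure different things.
\item Your proposed route to equality would not help. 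In the standard conifold-transition picture, the space of relations among the \(\delta_k\) is the annihilator in \(\mathbb Q^r\) of the space of relations among the exceptional curves of a small resolution, so their dimensions sum to \(r\). That is a duality, not an equality of sublattices of \(V_\Sigma\).
\item The atom side has the same issue. Theorem~\ref{thm:atom-side-nonsplitting} detects non-splitting through the raw off-diagonal \(\lambda_{ij}\), not through a form on \(\overline V\).
\item Taking \(\mathcal A\) to be only the flexible part discards the extension \(A(IC^H_{X_0})\to A(\mathcal P^H)\). Its splitting is the atom-side datum the package must record.
\end{itemize}

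\textbf{What the paper actually does.} The argument is much lighter, and your first step already contains most of it. The statement is proved by construction and citation in Section~6:
\begin{itemize}
\item Definition~\ref{def:interacting-light-sector-package} assembles \(\mathfrak L_\Sigma\) from \(E_\Sigma^{\mathrm{node}}\), \(E_\Sigma^{\mathrm{geom}}\), \(\{T_i\}\), \(\Lambda\) and the \emph{full} atom exact sequence.
\item Propositions~\ref{prop:light-sector-package-corrected-realization}--\ref{prop:light-sector-package-atom-realization} identify each realization from the cited global-gluing, Picard--Lefschetz and atom results.
\item Compatibility is the verification of clauses (1)--(5) of Definition~\ref{def:compatibility-of-realizations}. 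These follow from those citations together with the Stokes--Extension identification.
\item Canonicity is Theorem~\ref{thm:light-sector-package-functoriality}, again via the cited intrinsicity results.
\end{itemize}

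\textbf{How to repair your proof.} Keep your tuple, but use the full atom sequence, and check those five clauses. Drop the common-quotient requirement, which belongs to the block-separated theorem. Your braid-move canonicity argument is not needed. As stated it is also inaccurate: a Hurwitz move acts on the Gram matrix \(\Lambda\) by a congruence, not a conjugation.
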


The strongest structural theorem of the paper is the following block-reduced comparison result.

\begin{theorem}[Block-reduced structure theorem]
\label{thm:block-reduced-structure-intro}
In the block-separated cycle family of \cite{RahmanGlobalGluing}, the finite-node light-sector package decomposes into two logically distinct layers:
\[
\text{relation collapse}
\qquad+\qquad
\text{residual interaction among surviving sectors}.
\]
More precisely, the corrected-extension-side realized space is controlled by the common relation lattice
\[
R_{\mathrm{res}}=R_{\mathrm{sm}}=R_{\mathrm{ext}}=R_{\mathrm{blk}},
\]
while the transport and atom-side interaction among the surviving global sectors is governed by the reduced block interaction matrix
\[
\Lambda_{\mathrm{blk}}=(\mu_{\beta\gamma}).
\]
In particular, the number of independent global light-sector directions is the number of relation blocks \(|B|\), while their coupling is measured by \(\Lambda_{\mathrm{blk}}\).
\end{theorem}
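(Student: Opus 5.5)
The plan is to prove the statement in two stages, one for each layer, and then to check that the two layers do not interfere. \emph{First layer (relation collapse).} I would start from Theorem~\ref{thm:relation-controlled-light-sector-subspace}. It supplies the relation-controlled subspace \(V_{\mathrm{rel}}\subset\mathbb{C}^{\Sigma}\) through which the realized package factors. I would then specialize to the block-separated cycle family of \cite{RahmanGlobalGluing}.

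In that family the cycle-node incidence matrix has block form. The node set decomposes as \(\Sigma=\bigsqcup_{\beta\in B}\Sigma_\beta\), and each vanishing-cycle relation is supported on a single block, where it forces the nodewise coefficients to be equal. The relation lattice \(R_{\mathrm{ext}}\) on the corrected-extension side is therefore spanned by the differences \(e_{p}-e_{q}\) with \(p,q\) in a common \(\Sigma_\beta\). Call this lattice \(R_{\mathrm{blk}}\).

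Next I would identify \(R_{\mathrm{sm}}\) and \(R_{\mathrm{res}}\) with \(R_{\mathrm{blk}}\).
\begin{itemize}
\item On the smoothing side, the relations among the vanishing \(S^3\)-classes in \(H_3(X_t)\) are the kernel of the map \(\mathbb{Z}^\Sigma\to H_3(X_t)\).
\item On the resolution side, the relations among the exceptional \(\mathbb{P}^1\)-classes in \(H_2(\widehat X)\) are the kernel of \(\mathbb{Z}^\Sigma\to H_2(\widehat X)\).
\end{itemize}
The standard conifold-transition duality says these two kernels coincide. Both are cut out by the same incidence data that defines \(R_{\mathrm{ext}}\) in \cite{RahmanGlobalGluing}. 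This gives \(R_{\mathrm{res}}=R_{\mathrm{sm}}=R_{\mathrm{ext}}=R_{\mathrm{blk}}\). The realized space is then \(\mathbb{C}^\Sigma/R_{\mathrm{blk}}\otimes\mathbb{C}\), or dually \(V_{\mathrm{rel}}\), and its dimension is \(|\Sigma|-\operatorname{rk}R_{\mathrm{blk}}=|B|\).

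\emph{Second layer (residual interaction).} I would use the transport and atom realizations of Theorems~\ref{thm:stokes-picard-lefschetz-interaction} and~\ref{thm:flexible-sector-mixing}. The Picard--Lefschetz operators are \(T_p(x)=x\pm\langle x,\delta_p\rangle\delta_p\). Nodes in the same block have identical vanishing class \(\delta_\beta\), so the operators \(T_p\) with \(p\in\Sigma_\beta\) coincide up to multiplicity. Hence the transport data factors through the block classes \(\delta_\beta\). The commutators and Stokes entries depend only on the pairings \(\mu_{\beta\gamma}:=\langle\delta_\beta,\delta_\gamma\rangle\), or on the corresponding Stokes matrix entries. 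Indeed, \(T_\beta\) and \(T_\gamma\) commute if and only if \(\mu_{\beta\gamma}=0\), and the braid relation holds when \(|\mu_{\beta\gamma}|=1\).

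On the atom side, I would use the rigid-flexible decomposition of \cite{RahmanHodgeAtoms2026}. The flexible atoms are indexed by the surviving block sectors, and two of them split exactly when the off-diagonal Stokes coupling between them vanishes. So the mixing graph is the support graph of \(\Lambda_{\mathrm{blk}}=(\mu_{\beta\gamma})\).

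The final step is compatibility. The block classes \(\delta_\beta\) have to give a basis of the realized quotient from the first layer, so that \(\Lambda_{\mathrm{blk}}\) is a well-defined form on \(\mathbb{C}^\Sigma/R_{\mathrm{blk}}\). I would obtain this from the package comparison of Theorem~\ref{thm:interacting-multi-node-light-sector-package}, whose compatibility maps, as defined in Section~6, intertwine the corrected-extension quotient with the span of the vanishing classes.

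\emph{Expected main obstacle.} The hardest step is identifying the smoothing and resolution relation lattices with \(R_{\mathrm{ext}}\) integrally, not just rationally, and showing that the Stokes and atom data descend to the quotient rather than depending on a nodewise lift. There are two things to check.
\begin{itemize}
\item For the lattice identification, I would first work rationally via the conifold-transition exact sequences. The block-separated hypothesis then gives a saturated sublattice, which upgrades the rational statement to an integral one.
\item For descent, I would check that \(\langle\delta_p,\cdot\rangle\) is constant on each block. This is exactly the content of a relation \(e_p-e_q\in R_{\mathrm{blk}}\).
\end{itemize}
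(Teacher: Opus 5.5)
\textbf{Gap 1: the lattice identification.} Your two-layer architecture, and your transport-side treatment (all $T_p$ with $p\in\Sigma_\beta$ equal $T_\beta$; the commutator is controlled by $\mu_{\beta\gamma}$), match the paper's parts (2)--(3). The step you use to get $R_{\mathrm{res}}=R_{\mathrm{sm}}$, however, is false as you set it up.

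Take $R_{\mathrm{sm}}=\ker(\mathbb Z^\Sigma\to H_3(X_t))$ and $R_{\mathrm{res}}=\ker(\mathbb Z^\Sigma\to H_2(\widehat X))$. The conifold-transition sequences do not make these kernels coincide. They give $\operatorname{rk}\operatorname{span}\{[C_k]\}=b_2(\widehat X)-b_2(X_t)=\operatorname{rk}R_{\mathrm{sm}}$, so the two kernels have complementary ranks. In fact, rationally and up to orientation signs on the $C_k$, each is the annihilator of the other: a relation $\sum_k a_k\delta_k=0$ bounds a $4$-chain that closes up in $\widehat X$ to a $4$-cycle $D$ with $D\cdot C_k=\pm a_k$. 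Already for one node, $\delta\neq 0$ in $H_3(X_t;\mathbb Q)$ while $[C]=0$ in $H_2(\widehat X;\mathbb Q)$.

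In the block family, your resolution-side kernel would be spanned by the Friedman relations $\sum_{p\in\Sigma_\beta}\pm[C_p]=0$. That lattice has rank $|B|$ and is the realized direction space (e.g.\ $e_1+e_2$ for a two-node block), not $R_{\mathrm{blk}}$, which has rank $r-|B|$. So $R_{\mathrm{res}}=R_{\mathrm{sm}}$ cannot be derived from transition duality with kernel definitions. Likewise, your sentence that both lattices ``are cut out by the same incidence data that defines $R_{\mathrm{ext}}$'' is exactly the nontrivial content, not something you may assert. The paper does not derive these equalities at all. It imports the chain $R_{\mathrm{res}}=R_{\mathrm{sm}}=R_{\mathrm{ext}}=R_{\mathrm{blk}}$ and $\dim_{\mathbb Q}E_\Sigma^{\mathrm{geom}}=|B|$ from \cite[Thm.~5.16, Cor.~5.17]{RahmanGlobalGluing}, and takes $\delta_k=v_\beta$ from Hypothesis~5.11(4) there. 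Your integral-saturation worry is moot, since everything is over $\mathbb Q$.

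\textbf{Gap 2: the atom side.} You re-index the flexible atoms by blocks and invoke a pairwise splitting criterion. But the atom-side sequence has $r$ flexible summands, one per node, and \cite[Thm.~1.4]{RahmanHodgeAtoms2026} is a global nodewise criterion: the sequence splits if and only if $\lambda_{ij}=0$ for all $i\neq j$. To reduce this to $\Lambda_{\mathrm{blk}}$, you must show that distinct nodes in the same block never obstruct splitting. The missing ingredient is that for $p_i,p_j\in\Sigma_\beta$ one has $\lambda_{ij}=\langle v_\beta,v_\beta\rangle=0$, by skew-symmetry of the intersection pairing in middle degree three. Only then does the criterion collapse to the off-diagonal entries $\mu_{\beta\gamma}$; this is exactly the paper's argument for part (4).

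\textbf{An unnecessary step.} Your final ``compatibility'' step, requiring the $v_\beta$ to give a basis of the realized quotient via intertwining maps, is not needed, because descent of $\Lambda$ is immediate from $\delta_k=v_\beta$. It is also not supplied by the compatibility notion of Section~6, which is only a linkage through the cited comparison frameworks and not a system of explicit maps.
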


On the transport side, the interaction is recorded by a distinguished matrix.

\begin{theorem}[Transport-side interaction matrix]
\label{thm:interaction-matrix}
The transport realization of the finite-node light-sector package carries a canonically defined interaction matrix governing the failure of commuting Picard--Lefschetz transport. This matrix is visible, through the comparison framework of the paper, in the associated Stokes and atom realizations.
\end{theorem}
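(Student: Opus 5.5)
The plan is to build the interaction matrix directly from the multi-node Picard--Lefschetz data and then transfer it to the Stokes and atom sides through the comparison framework of Theorem~\ref{thm:interacting-multi-node-light-sector-package}. Concretely, I would fix a distinguished system of paths in the punctured disc (equivalently, in a small deformation of the base in which the \(r\) nodes are separated into \(r\) distinct critical values). This produces vanishing cycles \(\delta_1,\dots,\delta_r\), one for each local rank-one sector of Theorem~\ref{thm:local-rank-one-light-sectors}. The Picard--Lefschetz transformations have the form
\[
T_k(x)=x+\langle x,\delta_k\rangle\,\delta_k ,
\]
with sign conventions fixed by the threefold case. I would then set
\[
\Lambda=(\mu_{jk}),\qquad \mu_{jk}=\langle \delta_j,\delta_k\rangle .
\]
A direct computation gives
\[
T_jT_k-T_kT_j \;\propto\; \mu_{jk}\cdot(\text{rank}\le 2 \text{ operator}),
\]
so \([T_j,T_k]=0\) exactly when \(\mu_{jk}=0\). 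This is the precise sense in which \(\Lambda\) governs the failure of commuting transport, and it makes the link to Theorem~\ref{thm:stokes-picard-lefschetz-interaction} explicit.

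Canonicity is the delicate point. A different choice of distinguished paths changes \((\delta_k)\) by a Hurwitz/braid-group move, and this conjugates \(\Lambda\) by the corresponding mutation. Signs of the \(\delta_k\) act by diagonal \(\pm1\) conjugation. I would therefore define the canonical object to be the orbit of \(\Lambda\) under these operations. Equivalently, I would use the invariant data it determines: the intersection form restricted to the span of the vanishing cycles, together with the relation lattice from Theorem~\ref{thm:relation-controlled-light-sector-subspace}. Passing to the quotient by that lattice gives the block matrix \(\Lambda_{\mathrm{blk}}\) of Theorem~\ref{thm:block-reduced-structure-intro}. This is where canonicity is cleanest, because the block structure is intrinsic to the degeneration.

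For visibility on the Stokes side, I would invoke the identification in \cite{RahmanHodgeAtoms2026} of the Stokes matrix of the associated \(F\)-bundle with the upper-triangular part of the Gram matrix of the exceptional collection of vanishing cycles. The symmetrization \(S+S^{T}\) then recovers \(\Lambda\), and the monodromy factors as \(S^{-1}S^{T}\), matching the ordered product of the \(T_k\). For the atom side, the rigid-flexible decomposition splits the flexible part as a direct sum of rank-one atoms exactly when the off-diagonal Stokes entries between the corresponding sectors vanish. Hence the connected components of the graph with adjacency \(\mu_{jk}\neq 0\) index the mixing classes of flexible atoms, which is the content of Theorem~\ref{thm:flexible-sector-mixing}.

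I expect the main obstacle to be the Stokes comparison. For a one-parameter conifold degeneration all nodes sit over a single critical value, so there is no honest exceptional ordering. I would first try to work with a generic small multi-parameter smoothing that separates the nodes, prove that \(\Lambda\) is constant, up to the braid action, over the complement of the discriminant, and then specialize back. If this fails, the fallback is to define the Stokes-side matrix only through the comparison framework of Section~6 and to prove agreement with \(\Lambda\) at the level of the reduced block matrix alone.
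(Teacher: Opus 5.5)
Your transport-side step matches the paper's argument: $\Lambda=(\langle\delta_i,\delta_j\rangle)$, and the commutator computation, which by skew-symmetry reads $[N_j,N_k](x)=-\mu_{jk}\bigl(\langle x,\delta_k\rangle\delta_j+\langle x,\delta_j\rangle\delta_k\bigr)$, is Proposition~\ref{prop:interaction-matrix-controls-transport}. The gap is in the Stokes-side visibility. You derive it from an identification of the Stokes matrix with the upper-triangular part of the Gram matrix of an exceptional collection. That presupposes an ordered distinguished basis over separated critical values. As you observe yourself, a one-parameter degeneration has a single critical value. Your remedy (separate the nodes in a multi-parameter smoothing, prove braid-constancy, specialize back, or else fall back) is a plan rather than an argument, so this half of the statement remains unproved. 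The formula you would feed into it is also wrong: on $H_3$ of a threefold the pairing is skew, so the symmetrization $S+S^{T}$ cannot recover $\Lambda$; a Seifert-type relation would be an antisymmetrization. The missing ingredient is the one the paper uses in Section~4.2, the Stokes--Extension Identification \cite[Thm.~1.2]{RahmanHodgeAtoms2026}. It identifies the Stokes matrix at the conifold locus with the matrix of $\var_F\colon\phi_\pi(F)\to\psi_\pi(F)$ under the mixed-Hodge-module realization. That morphism is built into $\mathcal P=\Cone(\var_F)[-1]$ and lives at the single degeneration point, so no ordering of critical values and no auxiliary deformation is needed. This is Proposition~\ref{prop:stokes-extension-light-sector-bridge}.

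The atom side and the canonicity discussion also need correcting. The paper does not pass through Stokes entries. Theorem~\ref{thm:atom-side-nonsplitting}, i.e.\ \cite[Thm.~1.4]{RahmanHodgeAtoms2026}, states directly that $0\to A(IC^H_{X_0})\to A(\mathcal P^H)\to\bigoplus_k A(i_{k*}\mathbb Q^H_{\{p_k\}}(-1))\to 0$ splits if and only if $\lambda_{ij}=0$ for all $i\neq j$. Splitting there refers to the extension of the flexible sum by the rigid atom, not to a decomposition of the flexible part, which is already a direct sum. The criterion is also all-or-nothing. Your claim that connected components of the graph $\{\mu_{jk}\neq 0\}$ index the mixing classes is strictly stronger than anything the cited result gives; the paper's three-node model only says blockwise mixing is ``suggested''.

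For canonicity, the braid-orbit construction is unnecessary in the paper's setting. Each $\delta_k$ is the vanishing sphere of the Milnor fiber at $p_k$ inside a nearby fiber, fixed up to orientation. Reorienting conjugates $\Lambda$ by a diagonal $\pm1$ matrix and leaves every $T_k$ unchanged. Moreover, these spheres lie in pairwise disjoint Milnor balls, so even after a separating deformation the collection is orthogonal and Hurwitz moves only permute it. The paper simply takes intrinsicity from \cite{RahmanHodgeAtoms2026} (Theorem~\ref{thm:light-sector-package-functoriality}). Finally, descending $\Lambda$ to $\Lambda_{\mathrm{blk}}$ through the relation lattice is justified only under \cite[Hyp.~5.11(4)]{RahmanGlobalGluing}, which requires a common vanishing class per block. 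It therefore cannot serve as the canonical form of $\Lambda$ for a general finite-node degeneration.
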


The package is intrinsic relative to the finite-node corrected degeneration data and its compatible realizations.

\begin{theorem}[Functoriality / intrinsicity of the light-sector package]
\label{thm:functoriality-intrinsicity-light-sector-package}
The interacting multi-node light-sector package is intrinsic to the corrected finite-node degeneration package together with its compatible mixed-Hodge and Stokes/atom realizations.
\end{theorem}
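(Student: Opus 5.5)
The package should be intrinsic because every ingredient entering its definition is already determined by the corrected degeneration object, or by one of its canonical lifts. Fix the corrected finite-node object \(\mathcal{E}\in\Perv(X_0)\) together with its canonical mixed-Hodge-module lift \(\mathcal{E}^H\in MHM(X_0)\) and its \(F\)-bundle/Stokes realization. I would then check the claim component by component. The argument has two halves. First, each component of the package is obtained by a functorial construction from \((\mathcal{E},\mathcal{E}^H,\text{Stokes/atom data})\). Second, an isomorphism of such corrected packages induces an isomorphism of light-sector packages, and this isomorphism is compatible with the comparison maps of Section~6.

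The local layer comes first. By Theorem~\ref{thm:local-rank-one-light-sectors}, each node \(p_k\) gives a rank-one sector: the stalk at \(p_k\) of the point-supported quotient \(\phi_f\mathcal{E}\), or its Tate-twisted lift in \(MHM(X_0)\). Vanishing cycles and restriction to points are functorial, so the ambient coefficient space \(\bigoplus_k L_{p_k}\) is natural in \(\mathcal{E}^H\). This holds up to permutation of nodes, which we absorb by indexing over \(\Sigma\) rather than over \(\{1,\dots,r\}\).

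The global assembly layer comes next. By Theorem~\ref{thm:relation-controlled-light-sector-subspace}, the relation-controlled subspace is defined as the image of the corrected extension class in \(\Ext^1\). Equivalently, it is cut out by the cycle--node incidence relations, which are the kernel of the map \(\bigoplus_k L_{p_k}\to H\) to global cohomology. A morphism of corrected objects therefore carries this kernel into the corresponding kernel, and so does an isomorphism, which gives intrinsicity. For the transport layer, I would define the interaction matrix of Theorem~\ref{thm:interaction-matrix} through commutators of the Picard--Lefschetz operators \(T_k=\Id+\langle\cdot,\delta_k\rangle\delta_k\). Its entries are then intersection pairings \(\langle\delta_j,\delta_k\rangle\) of vanishing classes. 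These pairings depend only on the variation map \(\var\) and the polarization on \(\psi_f\), so they are intrinsic. Finally, the atom decomposition is unique by the rigid-flexible uniqueness statement of \cite{RahmanHodgeAtoms2026}, and this gives the atom layer.

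The main obstacle is compatibility rather than naturality of the individual pieces. The Stokes matrices depend on choices: a distinguished basis of paths, and an ordering of the vanishing cycles. Any two such choices differ by a braid group action together with sign changes. I would therefore show that the package records only the braid-and-sign orbit of the Stokes data, namely the conjugacy class of the total monodromy together with the Gram matrix up to \(\pm\) and mutation. I would then check that the comparison isomorphisms of Section~6 are equivariant for this action. This is plausible because the intersection form is preserved under mutation. Once equivariance holds, the package is independent of all auxiliary choices, and hence intrinsic to the corrected degeneration package together with its compatible realizations.
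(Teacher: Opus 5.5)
\paragraph{Where the proposal has gaps.} Your layer-by-layer organization is the paper's. Its proof of this statement (Theorem~\ref{thm:light-sector-package-functoriality}) simply observes that each entry of $\mathfrak L_\Sigma$ is intrinsic by a cited result: $E_\Sigma^{\mathrm{node}}$ and the coefficient data by \cite{RahmanQuiverI}, $E_\Sigma^{\mathrm{geom}}$ by \cite{RahmanGlobalGluing} \emph{under its admissibility and block-adaptedness hypotheses}, and $\Lambda$ together with the atom sequence by \cite{RahmanHodgeAtoms2026}. Your atom layer matches this. Where you replace the citations with your own arguments, however, two steps fail.

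\emph{Global-assembly layer.} $E_\Sigma^{\mathrm{geom}}$ is not the image of the corrected class. That image is just the span of $[\mathcal P]_{\mathrm{perv}}$, at most a line, whereas in the block-separated family $\dim E_\Sigma^{\mathrm{geom}}=|B|$. Nor is $E_\Sigma^{\mathrm{geom}}$ by definition cut out by the kernel of a map to global cohomology. It is the transport of $V_{\mathrm{geom}}=\operatorname{Im}(\iota_C)$, built from the cycle--node incidence datum $(C,\iota_C)$ (Theorem~\ref{thm:global-gluing-light-sector}). The kernel you describe is a smoothing- or resolution-side lattice ($R_{\mathrm{sm}}$ or $R_{\mathrm{res}}$). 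Its coincidence with $R_{\mathrm{ext}}$ is a theorem of \cite{RahmanGlobalGluing} that the paper invokes only in the block-separated family (part~(1) of Theorem~\ref{thm:block-reduced-structure}), so your ``equivalently'' is unjustified at the generality of the statement. Also, $(C,\iota_C)$ is extra geometric input rather than a functor of $\mathcal P$, so your ``kernel goes to kernel'' naturality argument has no functorial map to act on. Canonicity of $E_\Sigma^{\mathrm{geom}}$ is exactly \cite[Thm.~1.1]{RahmanGlobalGluing}, and you never impose the hypotheses without which $E_\Sigma^{\mathrm{geom}}$ is not even defined.

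\emph{Transport layer.} Your braid-and-mutation repair does not prove what is claimed. The statement takes the Stokes/atom realizations as given, so choices made there are input, not something the package must be shown independent of. More importantly, all nodes lie over the single critical value $t=0$ and each $\delta_i$ is attached to its node $p_i$, so there is no system of distinguished paths to choose. The only ambiguities are the labelling of nodes, which you rightly absorb by indexing over $\Sigma$, and the orientation of each $\delta_i$. A mutation, by contrast, replaces a nodewise class by a combination such as $T_i(\delta_j)$ that is no longer attached to one node. It also changes the Gram entries; only the form on $H_3(X_t)$ is preserved. So braid-orbit equivariance, which you leave at ``plausible,'' would at best give intrinsicity of a strictly coarser object. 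It would not cover the node-indexed $\{T_i\}$ and $\Lambda$ that actually appear in $\mathfrak L_\Sigma$ and feed the block theorem.

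The check actually needed is elementary. $N_i(\alpha)=\langle\alpha,\delta_i\rangle\delta_i$ is unchanged under $\delta_i\mapsto-\delta_i$, while $\Lambda\mapsto D\Lambda D$ with $D$ a diagonal sign matrix. Hence $\{T_i\}_{p_i\in\Sigma}$ is intrinsic, and $\Lambda$ is intrinsic up to this conjugation, which preserves the off-diagonal vanishing pattern used in Theorem~\ref{thm:atom-side-nonsplitting}. This also corrects your earlier claim that the pairings are fixed outright by $\var$ and the polarization; they are fixed only up to these signs. Alternatively, cite \cite{RahmanHodgeAtoms2026} as the paper does.

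\emph{Smaller points.} The point-supported local sectors are the cokernel $Q_\Sigma$ of $IC_{X_0}\to\mathcal P$. Vanishing cycles are taken of $\mathbb Q_X[3]$ along $\pi$, not of the corrected object on $X_0$. Your account also omits the coefficient data $c_\Sigma$, whose intrinsicity rests on the normalization of the classes $e_k$ from \cite{RahmanQuiverI}.
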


\begin{remark}
A natural next question is to determine the precise relationship between corrected-extension interaction, detected at the raw nodewise level by
\[
E_{\Sigma}^{\mathrm{geom}}\subsetneq E_{\Sigma}^{\mathrm{node}},
\]
and transport interaction, detected by
\[
\Lambda\neq 0.
\]
The present paper does not prove a general nodewise equivalence between these conditions. Instead, it proves the block-reduced comparison theorem above, which shows how relation collapse and residual interaction fit together in the block-separated cycle family.
\end{remark}

\begin{remark}
The present paper isolates the mathematical package and its compatible realizations. The full multi-node Strominger recast, including the later effective-theoretic interpretation of this finite-node state space, is deferred to the next paper.
\end{remark}

\subsection{Relation to previous work}

The corrected-extension input comes from the global-gluing analysis of finite-node corrected extensions, which identifies the geometrically realized relation-controlled subspace \cite{RahmanGlobalGluing}. The transport/atom input comes from the Hodge-atoms analysis, which identifies the rigid-flexible decomposition, the Stokes--Extension comparison, and the corresponding mixing behavior \cite{RahmanHodgeAtoms2026}. The finite-node state-data input comes from the nodewise extension-space analysis of \cite{RahmanQuiverI}. The present paper does not reprove these frameworks in full. Instead, it reformulates the relevant ingredients in light-sector language, packages them into a single finite-node object, and proves a block-reduced structure theorem clarifying how relation collapse and interaction fit together.

It is important to be precise about the level of claim. The novelty here is not a new construction of nearby or vanishing cycles, not a new mixed-Hodge-module lift, and not a rederivation of the atom-side formalism. The novelty is the theorem-level packaging and comparison of these ingredients into a single finite-node object, together with the identification of the two-layer block-reduced structure in the block-separated cycle family. This is exactly the level needed for a later multi-node reformulation of Strominger's conifold mechanism, but it remains mathematically prior to that physical recast.

\subsection{Organization of the paper}

Section~2 fixes the finite-node general setting, the corrected degeneration objects, and the ambient-versus-realized distinction on the corrected-extension side \cite{RahmanQuiverI,RahmanPerverseNearbyCycles,RahmanMixedHodgeModules}. Section~3 records the global-gluing input and formulates the corrected-extension-side split/interacting dichotomy \cite{RahmanGlobalGluing}. Section~4 records the transport-side realization via Picard--Lefschetz/Stokes interaction and the operator-side interaction matrix \cite{RahmanHodgeAtoms2026}. Section~5 records the atom-side realization via rigid/flexible decomposition and non-splitting \cite{RahmanHodgeAtoms2026}. Section~6 defines the interacting multi-node light-sector package, makes precise the notion of compatibility used in the paper, and proves the block-reduced structure theorem. Section~7 gives the physical interpretation and explains why the package should be viewed as a mathematical precursor, rather than yet a full physical recast. Section~8 gives examples illustrating geometric motivation, split behavior, interacting behavior, and block-structured behavior. Section~9 records the outlook toward the later multi-node Strominger reformulation.

\section{Finite-node corrected degeneration objects}

\subsection{General setting}\label{subsec:general-setting}

Throughout the paper, we work in the following finite-node conifold setting. Let
\[
\pi:X\to\Delta
\]
be a projective one-parameter degeneration whose general fiber \(X_t\) is a smooth Calabi--Yau threefold and whose central fiber \(X_0\) has finitely many ordinary double points
\[
\Sigma=\{p_1,\dots,p_r\}\subset X_0.
\]
Write
\[
U:=X_0\setminus\Sigma,
\qquad
j:U\hookrightarrow X_0,
\qquad
i_k:\{p_k\}\hookrightarrow X_0.
\]

Each \(p_k\in\Sigma\) is analytically an ODP, with local model
\[
x_1^2+x_2^2+x_3^2+x_4^2=0,
\qquad
x_1^2+x_2^2+x_3^2+x_4^2=t
\]
for the smoothing. The local vanishing cycle is rank one in middle degree.

We use the corrected degeneration objects
\[
\mathcal P\in \Perv(X_0;\mathbb Q),
\qquad
\mathcal P^H\in MHM(X_0),
\]
the nodewise objects
\[
Q_\Sigma:=\bigoplus_{k=1}^r i_{k*}\mathbb Q_{\{p_k\}},
\qquad
Q_\Sigma^H:=\bigoplus_{k=1}^r i_{k*}\mathbb Q^H_{\{p_k\}}(-1),
\]
the corrected-extension space
\[
E_{\Sigma}^{\mathrm{node}}
:=
\Ext^1_{\Perv(X_0;\mathbb Q)}(Q_\Sigma,IC_{X_0}),
\qquad
E_{\Sigma}^{\mathrm{node}}
\cong
\bigoplus_{k=1}^r \mathbb Q e_k
\]
from \cite{RahmanQuiverI}, the geometrically realized subspace
\[
E_{\Sigma}^{\mathrm{geom}}\subseteq E_{\Sigma}^{\mathrm{node}}
\]
when the global-gluing hypotheses of \cite{RahmanGlobalGluing} are imposed, the transport-side data
\[
\delta_i\in H_3(X_t,\mathbb Z),
\qquad
T_i(\alpha)=\alpha+\langle \alpha,\delta_i\rangle\delta_i,
\qquad
N_i:=T_i-\Id,
\qquad
\Lambda=(\lambda_{ij}),
\qquad
\lambda_{ij}:=\langle \delta_i,\delta_j\rangle,
\]
and the atom-side objects
\[
A(IC^H_{X_0}),
\qquad
A(\mathcal P^H),
\qquad
A\!\left(i_{k*}\mathbb Q^H_{\{p_k\}}(-1)\right)
\]
as in \cite{RahmanHodgeAtoms2026}.

Unless explicitly stated otherwise, all later sections are understood to take place in this general setting.

\subsection{Corrected objects and local node sectors}

We now recall the corrected finite-node degeneration objects in the general setting of Section~\ref{subsec:general-setting}. Set
\[
F:=\mathbb Q_X[3],
\qquad
\var_F:\phi_\pi(F)\to\psi_\pi(F),
\qquad
\mathcal P:=\Cone(\var_F)[-1]\in\Perv(X_0;\mathbb Q).
\]
In the finite-node ODP setting one has
\begin{equation}\label{eq:light-corrected-perverse}
0\longrightarrow IC_{X_0}\longrightarrow \mathcal P\longrightarrow \bigoplus_{k=1}^r i_{k*}\mathbb Q_{\{p_k\}}\longrightarrow 0,
\end{equation}
with
\[
IC_{X_0}=j_{!*}\mathbb Q_U[3],
\qquad
j^*\mathcal P\cong\mathbb Q_U[3]
\]
\cite{RahmanPerverseNearbyCycles,RahmanGlobalGluing}. There is also a mixed-Hodge-module lift
\[
\mathcal P^H\in MHM(X_0),
\qquad
\rat(\mathcal P^H)\cong\mathcal P,
\]
fitting into
\begin{equation}\label{eq:light-corrected-mhm}
0\longrightarrow IC_{X_0}^H\longrightarrow \mathcal P^H\longrightarrow \bigoplus_{k=1}^r i_{k*}\mathbb Q^H_{\{p_k\}}(-1)\longrightarrow 0
\end{equation}
\cite{Saito90,RahmanMixedHodgeModules}.

Equations \eqref{eq:light-corrected-perverse} and \eqref{eq:light-corrected-mhm} identify one rank-one local sector per node on the perverse and mixed-Hodge-module sides. The associated nodewise extension space
\begin{equation}\label{eq:light-node-extension-space}
E_\Sigma^{\mathrm{node}}
:=
\Ext^1_{\Perv(X_0;\mathbb Q)}(Q_\Sigma,IC_{X_0})
\end{equation}
admits the nodewise decomposition
\begin{equation}\label{eq:light-nodewise-decomposition}
E_\Sigma^{\mathrm{node}}
\cong
\bigoplus_{k=1}^r \mathbb Q e_k
\end{equation}
with one distinguished class \(e_k\) per node \(p_k\in\Sigma\) \cite{RahmanQuiverI,RahmanMixedHodgeModules}. Thus the local finite-node input is formally nodewise and rank one at each singular point.

\begin{definition}\label{def:local-light-sector-package}
The \emph{local light-sector package} is
\[
\left\{\,i_{k*}\mathbb Q_{\{p_k\}},\,e_k\,\right\}_{k=1}^r
\]
together with its mixed-Hodge-module lift
\[
\left\{\,i_{k*}\mathbb Q^H_{\{p_k\}}(-1),\,e_k\,\right\}_{k=1}^r.
\]
\end{definition}

\subsection{Ambient versus realized finite-node data}

The corrected perverse object \(\mathcal P\) determines an extension class
\[
[\mathcal P]_{\mathrm{perv}}\in E_\Sigma^{\mathrm{node}},
\]
which expands as
\begin{equation}\label{eq:light-corrected-class}
[\mathcal P]_{\mathrm{perv}}=\sum_{k=1}^r c_k e_k,
\qquad
c_k\in\mathbb Q.
\end{equation}
This yields the nodewise coefficient data
\[
A_\Sigma:=(V_\Sigma,E_\Sigma,c_\Sigma),
\qquad
V_\Sigma=\{v_1,\dots,v_r\},
\qquad
c_\Sigma=(c_1,\dots,c_r)\in\mathbb Q^r
\]
in the sense of \cite{RahmanQuiverI}.

The key distinction is that
\[
E_\Sigma^{\mathrm{node}}
\quad\text{is ambient,}
\qquad
[\mathcal P]_{\mathrm{perv}}
\quad\text{is realized.}
\]
Equation \eqref{eq:light-nodewise-decomposition} gives formal nodewise freedom, but by itself does not imply free global assembly of \eqref{eq:light-corrected-class}. The global-gluing problem is to determine the realized subspace
\[
E_\Sigma^{\mathrm{geom}}\subseteq E_\Sigma^{\mathrm{node}}
\]
cut out by geometry \cite{RahmanGlobalGluing}. This is the corrected-extension-side origin of finite-node interaction.

\section{Global gluing and relation-controlled multi-node light sectors}

We continue in the general setting of Section~\ref{subsec:general-setting}. The purpose of the present section is to isolate the geometrically realized corrected-extension-side subspace of
\[
E_{\Sigma}^{\mathrm{node}}
\cong
\bigoplus_{k=1}^r \mathbb Q e_k
\]
and to formulate the split/interacting dichotomy at the level of global gluing.

\subsection{Naive free assembly and the realized subspace}

By Section~2, the corrected class
\[
[\mathcal P]_{\mathrm{perv}}\in E_{\Sigma}^{\mathrm{node}}
\]
admits an expansion
\begin{equation}\label{eq:gluing-corrected-class}
[\mathcal P]_{\mathrm{perv}}=\sum_{k=1}^r c_k e_k.
\end{equation}
The naive free picture identifies the global light-sector space with the full ambient nodewise space.

\begin{definition}\label{def:naive-free-light-sector-space}
The \emph{naive free light-sector space} is
\[
L_{\Sigma}^{\mathrm{free}}:=E_{\Sigma}^{\mathrm{node}}
\cong
\bigoplus_{k=1}^r \mathbb Q e_k.
\]
\end{definition}

What must be determined is not the ambient space, but the geometrically realized one. The global-gluing input from \cite{RahmanGlobalGluing} is a cycle-node incidence datum
\[
(C,\iota_C),
\qquad
\iota_C:\mathbb Q^A\to\mathbb Q^r,
\]
whose image
\begin{equation}\label{eq:gluing-Vgeom}
V_{\mathrm{geom}}:=\operatorname{Im}(\iota_C)\subseteq\mathbb Q^r
\end{equation}
encodes the admissible coefficient directions. Transporting \eqref{eq:gluing-Vgeom} across the nodewise identification yields the realized subspace.

\begin{definition}\label{def:geom-light-sector-space}
The \emph{geometrically realized light-sector space} is
\[
L_{\Sigma}^{\mathrm{geom}}:=E_{\Sigma}^{\mathrm{geom}}\subseteq E_{\Sigma}^{\mathrm{node}}.
\]
\end{definition}

\begin{theorem}[Global gluing]\label{thm:global-gluing-light-sector}
Assume the cycle-node incidence datum is geometrically admissible and block-adapted in the sense of \cite{RahmanGlobalGluing}. Then there exists a canonically defined subspace
\[
E_{\Sigma}^{\mathrm{geom}}\subseteq E_{\Sigma}^{\mathrm{node}}
\]
such that
\[
[\mathcal P]_{\mathrm{perv}}\in E_{\Sigma}^{\mathrm{geom}}.
\]
Equivalently, the corrected class factors through a relation-controlled quotient of the free nodewise space.
\end{theorem}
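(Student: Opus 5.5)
The plan is to make the subspace \(E_\Sigma^{\mathrm{geom}}\) explicit by transport of structure, and then read the membership statement off the global-gluing theorem of \cite{RahmanGlobalGluing}. First I will fix the nodewise identification
\[
\theta:\mathbb Q^r\xrightarrow{\ \sim\ }E_\Sigma^{\mathrm{node}},\qquad \theta(a_1,\dots,a_r)=\sum_{k=1}^r a_k e_k,
\]
given by \eqref{eq:light-nodewise-decomposition}. I will then define
\[
E_\Sigma^{\mathrm{geom}}:=\theta(V_{\mathrm{geom}})=\theta(\operatorname{Im}\iota_C),
\]
with \(V_{\mathrm{geom}}\) as in \eqref{eq:gluing-Vgeom}.

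Canonicity requires two things. The basis \(\{e_k\}\) must be intrinsic, i.e.\ each \(e_k\) is the image of the canonical generator of \(\Ext^1(i_{k*}\mathbb Q_{\{p_k\}},IC_{X_0})\), which is rank one by the local computation behind Theorem~\ref{thm:local-rank-one-light-sectors}. And \(\operatorname{Im}\iota_C\) must be independent of the auxiliary choices of cycle basis \(\mathbb Q^A\). The second point holds because a change of basis of \(\mathbb Q^A\) precomposes \(\iota_C\) with an automorphism, which does not change the image. The first point leaves only the ambiguity of rescaling each \(e_k\), and I will note that this is fixed by the normalisation used in \cite{RahmanQuiverI}. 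If instead one only wants canonicity up to the diagonal torus, then admissible data are stated in \cite{RahmanGlobalGluing} to be compatible with it.

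Next I will prove the membership \([\mathcal P]_{\mathrm{perv}}\in E_\Sigma^{\mathrm{geom}}\). Under \(\theta^{-1}\), the class \([\mathcal P]_{\mathrm{perv}}\) corresponds to the coefficient vector \(c_\Sigma=(c_1,\dots,c_r)\) from \eqref{eq:light-corrected-class}. So the claim reduces to \(c_\Sigma\in\operatorname{Im}\iota_C\), which is exactly the geometric content of the admissibility hypothesis. Concretely, the local coefficient \(c_k\) is computed by the variation map \(\var_F\) at \(p_k\). For a vanishing cycle \(\delta_i\), I would use that its image in the cohomology of \(X_0\) is supported on the nodes it meets, so \(\var_F\) factors through cycle classes. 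Consequently the global class is \(\iota_C\) of the vector of cycle contributions, and this factorisation is what \cite{RahmanGlobalGluing} establishes under the admissible, block-adapted hypothesis. I will therefore invoke it directly rather than rederive it.

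The equivalent quotient formulation then follows by linear algebra. Set \(R:=V_{\mathrm{geom}}^{\perp}\subseteq(\mathbb Q^r)^\vee\), the relation lattice, so that \(V_{\mathrm{geom}}=\{a\in\mathbb Q^r:\ \rho(a)=0 \text{ for all } \rho\in R\}\). The inclusion \(E_\Sigma^{\mathrm{geom}}\hookrightarrow E_\Sigma^{\mathrm{node}}\) is dual to the quotient \((E_\Sigma^{\mathrm{node}})^\vee\twoheadrightarrow(E_\Sigma^{\mathrm{node}})^\vee/\theta^{\vee}R\). Hence "the corrected class lies in \(E_\Sigma^{\mathrm{geom}}\)" is the same statement as "the corrected class is annihilated by all relations". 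Equivalently, the class factors through \(\mathbb Q^A/\ker\iota_C\cong V_{\mathrm{geom}}\).

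I expect the main obstacle to be the compatibility between the normalisation of the \(e_k\) and the one used for \(\iota_C\) in \cite{RahmanGlobalGluing}. If the two conventions differ by a nodewise sign or scalar, the image \(\operatorname{Im}\iota_C\) must be transported by the corresponding diagonal matrix. I would check this first on a single node, where \(c_1\neq0\) and \(\iota_C\) is surjective, and then extend nodewise using locality of \(\var_F\).
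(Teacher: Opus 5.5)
Your proposal is correct and follows the paper's route: the paper's proof is just an appeal to Thm.~1.1 and Sec.~4 of \cite{RahmanGlobalGluing}, and you likewise define \(E_\Sigma^{\mathrm{geom}}\) as the transport of \(V_{\mathrm{geom}}=\operatorname{Im}\iota_C\) across the nodewise identification, then delegate the membership \(c_\Sigma\in\operatorname{Im}\iota_C\) to that same cited theorem; your canonicity remarks and dual/quotient reformulation are harmless unpacking. The one dispensable piece is your heuristic that \(\var_F\) factors through cycle classes: as written it conflates the per-node vanishing cycles \(\delta_k\) with the global cycles indexed by \(A\), and it is not needed once you cite the result.
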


\begin{proof}
This is \cite[Thm.~1.1 and Sec.~4]{RahmanGlobalGluing}, rewritten in the present light-sector language.
\end{proof}

\subsection{Split and interacting corrected-extension behavior}

The split baseline is the case in which the geometrically realized space coincides with the ambient nodewise space.

\begin{definition}\label{def:split-light-sector-case}
The finite-node light-sector package is \emph{split on the corrected-extension side} if
\[
E_{\Sigma}^{\mathrm{geom}}=E_{\Sigma}^{\mathrm{node}}.
\]
It is \emph{interacting on the corrected-extension side} if
\[
E_{\Sigma}^{\mathrm{geom}}\subsetneq E_{\Sigma}^{\mathrm{node}}.
\]
\end{definition}

\begin{proposition}\label{prop:nonfree-global-assembly}
Assume the hypotheses of Theorem~\ref{thm:global-gluing-light-sector}. If
\[
E_{\Sigma}^{\mathrm{geom}}\subsetneq E_{\Sigma}^{\mathrm{node}},
\]
then the local rank-one sectors do not assemble as a free direct sum of independent global light sectors.
\end{proposition}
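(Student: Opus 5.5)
The plan is to argue by contraposition: free assembly forces $E_{\Sigma}^{\mathrm{geom}}=E_{\Sigma}^{\mathrm{node}}$. First I will make precise what ``assemble as a free direct sum of independent global light sectors'' means in the language already set up. By Definition~\ref{def:naive-free-light-sector-space}, the free picture identifies the global light-sector space with $L_{\Sigma}^{\mathrm{free}}=E_{\Sigma}^{\mathrm{node}}\cong\bigoplus_{k=1}^r\mathbb Q e_k$. Free assembly is therefore the statement that every nodewise direction $e_k$ is realized independently: the globally realized coefficient vectors $c_\Sigma=(c_1,\dots,c_r)$ of \eqref{eq:light-corrected-class} range over all of $\mathbb Q^r$. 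Equivalently, the realized space $L_{\Sigma}^{\mathrm{geom}}$ of Definition~\ref{def:geom-light-sector-space} coincides with $L_{\Sigma}^{\mathrm{free}}$.

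Next I will invoke Theorem~\ref{thm:global-gluing-light-sector}. Under the admissibility and block-adaptedness hypotheses it gives a canonical subspace $E_{\Sigma}^{\mathrm{geom}}$, obtained by transporting $V_{\mathrm{geom}}=\operatorname{Im}(\iota_C)$ across the nodewise identification \eqref{eq:light-nodewise-decomposition}. It also shows that the corrected class $[\mathcal P]_{\mathrm{perv}}$ lies in this subspace. Consequently every globally realized coefficient vector lies in $V_{\mathrm{geom}}$.

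Now suppose, for contradiction, that the local sectors did assemble freely. Then the realized coefficient directions would span $\mathbb Q^r$. Since they are all contained in $V_{\mathrm{geom}}$, this would force $V_{\mathrm{geom}}=\mathbb Q^r$ and hence $E_{\Sigma}^{\mathrm{geom}}=E_{\Sigma}^{\mathrm{node}}$, contradicting the hypothesis. Concretely, a proper subspace $V_{\mathrm{geom}}\subsetneq\mathbb Q^r$ has a nonzero annihilating functional $\ell=\sum_k a_k e_k^\vee$. This gives a nontrivial linear relation $\sum_k a_k c_k=0$ that every realized class must satisfy. That is exactly a dependence among the nodewise directions, which is incompatible with independence.

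The main obstacle is the first step. The phrase ``independent global light sectors'' has to be pinned to the realized subspace $E_{\Sigma}^{\mathrm{geom}}$, rather than to the single class $[\mathcal P]_{\mathrm{perv}}$; a single vector never spans $\mathbb Q^r$ when $r\ge 2$, so the comparison must be at the level of subspaces. I would handle this by taking the definition of $L_{\Sigma}^{\mathrm{geom}}$ as the operative meaning of the realized sector space. I would also note that canonicity in Theorem~\ref{thm:global-gluing-light-sector} ensures the comparison does not depend on choices of the basis $e_k$. Once this is fixed, the rest reduces to the elementary linear-algebra observation above.
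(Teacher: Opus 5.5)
Your argument is correct and is essentially the paper's: the paper also observes that a free nodewise assembly would identify the realized coefficient space $E_{\Sigma}^{\mathrm{geom}}$ with the full ambient space $E_{\Sigma}^{\mathrm{node}}$, which the proper containment rules out. Your annihilating-functional step is a harmless elaboration of that same one-line contrapositive. Your remark that the comparison must be made with the subspace $L_{\Sigma}^{\mathrm{geom}}$ rather than the single class $[\mathcal P]_{\mathrm{perv}}$ is the same identification the paper makes implicitly.
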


\begin{proof}
A free nodewise assembly would identify the globally realized coefficient space with the full ambient space \(E_{\Sigma}^{\mathrm{node}}\). Proper containment excludes this.
\end{proof}

Thus the corrected-extension-side split/interacting dichotomy is encoded by the pair
\[
\left(E_{\Sigma}^{\mathrm{node}},E_{\Sigma}^{\mathrm{geom}}\right).
\]
This is the corrected-extension realization later packaged in Section~6.

\section{Multi-node Picard--Lefschetz and Stokes interaction}

We continue in the general setting of Section~\ref{subsec:general-setting}. This section records the transport-side realization of finite-node interaction through the vanishing cycles \(\delta_i\), the Picard--Lefschetz operators \(T_i\), and the operator-side interaction matrix \(\Lambda\).

\subsection{Picard--Lefschetz operators and the interaction matrix}

For each \(i\), define
\begin{equation}\label{eq:PL-operator-i}
T_i(\alpha):=\alpha+\langle \alpha,\delta_i\rangle \delta_i,
\qquad
\alpha\in H_3(X_t,\mathbb Q),
\end{equation}
and
\begin{equation}\label{eq:nilpotent-part-i}
N_i:=T_i-\mathrm{Id}.
\end{equation}
Then
\begin{equation}\label{eq:Ni-rank-one}
N_i(\alpha)=\langle \alpha,\delta_i\rangle \delta_i,
\qquad
N_i^2=0.
\end{equation}

\begin{lemma}\label{lem:PL-local-rank-one}
Each \(N_i\) has rank one, and the family \(\{N_i\}_{i=1}^r\) is the operator-side local rank-one package attached to \(\Sigma\).
\end{lemma}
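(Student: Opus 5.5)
The plan is to read everything off the explicit formula \eqref{eq:Ni-rank-one}, namely \(N_i(\alpha)=\langle\alpha,\delta_i\rangle\delta_i\). I would first show that \(N_i\) has rank exactly one, then verify \(N_i^2=0\) directly, and finally check that the assignment \(p_i\mapsto N_i\) matches the nodewise package of Definition~\ref{def:local-light-sector-package}.

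\textbf{Rank at most one.} The formula shows that the image of \(N_i\) lies in the line \(\mathbb Q\delta_i\subseteq H_3(X_t,\mathbb Q)\), so \(\operatorname{rank}N_i\le 1\).

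\textbf{Rank at least one.} Here we need \(\delta_i\neq 0\) and a class \(\alpha\) with \(\langle\alpha,\delta_i\rangle\neq0\).

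\begin{itemize}
\item Each \(\delta_i\) is the vanishing sphere \(S^3\) of the local model \(x_1^2+\dots+x_4^2=t\) at \(p_i\). We only need its global class to be nonzero, and I will take this as part of the setting: the vanishing cycle generating the rank-one local vanishing cohomology is nonzero in \(H_3(X_t)\).
\item The intersection form on \(H_3(X_t,\mathbb Q)\) is nondegenerate by Poincaré duality on the compact smooth threefold \(X_t\). So \(\delta_i\neq 0\) yields some \(\alpha\) with \(\langle\alpha,\delta_i\rangle\neq0\).
\item Then \(N_i(\alpha)\) is a nonzero multiple of \(\delta_i\), so \(\operatorname{rank}N_i=1\).
\end{itemize}

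\textbf{Nilpotence.} Compute
\[
N_i^2(\alpha)=\langle\alpha,\delta_i\rangle\langle\delta_i,\delta_i\rangle\delta_i .
\]
Since the form is skew-symmetric in odd dimension, \(\langle\delta_i,\delta_i\rangle=0\), which gives \(N_i^2=0\) as recorded in \eqref{eq:Ni-rank-one}.

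\textbf{The identification with the local package.} I would argue as follows.

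\begin{itemize}
\item The index \(i\) runs over \(\Sigma\), and \(N_i\) is determined by the local data at \(p_i\) alone: the vanishing cycle \(\delta_i\), up to sign, which does not affect \(N_i\).
\item The line \(\operatorname{Im}N_i=\mathbb Q\delta_i\) is the image of the rank-one local vanishing cohomology \(\phi_\pi(F)_{p_i}\) under \(\var_F\) at \(p_i\).
\item That local vanishing cohomology is exactly the stalk giving the summand \(i_{i*}\mathbb Q_{\{p_i\}}\) of the quotient in \eqref{eq:light-corrected-perverse}.
\end{itemize}

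So \(p_i\mapsto N_i\) is a bijection onto a family of rank-one operators, one per node, matching Definition~\ref{def:local-light-sector-package}. This second clause is essentially definitional, and I would state it that way.

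\textbf{Main obstacle.} The only nontrivial point is nonvanishing of \(\delta_i\) in \(H_3(X_t,\mathbb Q)\). In multi-node degenerations, individual vanishing cycles can be homologically dependent, but each is still nonzero. If needed, I would justify nonvanishing via the variation map: \(\var\) is nonzero on the rank-one \(\phi\) at each ODP, because \(\mathcal P\) has the nonzero quotient \(i_{k*}\mathbb Q_{\{p_k\}}\). Failing that, I would state it as an explicit standing hypothesis.
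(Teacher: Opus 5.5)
This is essentially the paper's own argument: the paper's proof just reads rank one off $N_i(\alpha)=\langle\alpha,\delta_i\rangle\delta_i$ as the standard Picard--Lefschetz formula for an ODP, with the second clause definitional. You are right, and more careful than the paper, to isolate $\delta_i\neq 0$ in $H_3(X_t;\mathbb Q)$ as the one substantive input, which the paper takes for granted through its citations.

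Keep that input as a hypothesis, or cite a global result, rather than relying on your $\var$-based fallback. Nonvanishing of $\var$ on the stalk at $p_i$ is purely local, whereas $\delta_i$ dies in $H_3(X_t;\mathbb Q)$ exactly when $e_i$ lies in the image of the local-contribution map $H_4(X_0;\mathbb Q)\cong H_4(X;\mathbb Q)\to H_4(X,X_t;\mathbb Q)\cong\mathbb Q^r$. In the Calabi--Yau setting this is excluded by Friedman's theorem: a smoothing of all nodes forces a relation $\sum_i\lambda_i[C_i]=0$ with every $\lambda_i\neq 0$ among the exceptional curves of a small resolution. The relation spaces of the $[C_i]$ and of the $\delta_i$ are mutual annihilators in $\mathbb Q^r$, so no $e_i$ can be a relation among the $\delta_j$.
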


\begin{proof}
Equation \eqref{eq:Ni-rank-one} is the rank-one Picard--Lefschetz transport formula for an ordinary double point in complex dimension three \cite{RahmanPerverseNearbyCycles,RahmanHodgeAtoms2026}.
\end{proof}

Set
\begin{equation}\label{eq:interaction-pairing}
\lambda_{ij}:=\langle \delta_i,\delta_j\rangle,
\qquad
\Lambda:=(\lambda_{ij})_{1\le i,j\le r}.
\end{equation}
Then
\begin{equation}\label{eq:commutator-N}
[N_i,N_j](\alpha)
=
\langle \alpha,\delta_j\rangle \lambda_{ji}\,\delta_i
-
\langle \alpha,\delta_i\rangle \lambda_{ij}\,\delta_j.
\end{equation}

\begin{proposition}\label{prop:interaction-matrix-controls-transport}
The operator-side matrix \(\Lambda\) controls the failure of pairwise commutativity of the Picard--Lefschetz operators. In particular, \(\Lambda=0\) implies commuting transport, while nonzero off-diagonal entries produce nontrivial commutator terms \eqref{eq:commutator-N}.
\end{proposition}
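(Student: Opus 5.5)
The plan is to read the statement directly off the commutator formula \eqref{eq:commutator-N}, which we will first verify by a short computation from \eqref{eq:Ni-rank-one}. For \(\alpha\in H_3(X_t,\mathbb Q)\), compute \(N_iN_j(\alpha)\) by applying \(N_i\) to \(N_j(\alpha)=\langle\alpha,\delta_j\rangle\delta_j\). This gives
\[
N_iN_j(\alpha)=\langle\alpha,\delta_j\rangle\langle\delta_j,\delta_i\rangle\delta_i=\langle\alpha,\delta_j\rangle\lambda_{ji}\,\delta_i .
\]
Symmetrically, \(N_jN_i(\alpha)=\langle\alpha,\delta_i\rangle\lambda_{ij}\,\delta_j\). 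Subtracting yields \eqref{eq:commutator-N}. Since \(T_i=\Id+N_i\), we have \([T_i,T_j]=[N_i,N_j]\). So commutativity of the Picard--Lefschetz transport is equivalent to the vanishing of all \([N_i,N_j]\), and the statement reduces to analysing the right-hand side of \eqref{eq:commutator-N}.

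\emph{First claim.} If \(\Lambda=0\), then every coefficient \(\lambda_{ij}\) and \(\lambda_{ji}\) in \eqref{eq:commutator-N} vanishes. Hence \([N_i,N_j]=0\) and \(T_iT_j=T_jT_i\) for all \(i,j\). This is immediate and is the "controls" direction: the only input from the cycles into the commutator is through the entries of \(\Lambda\).

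\emph{Second claim.} Suppose \(\lambda_{ij}\neq0\) for some \(i\neq j\). We must show that the commutator term is actually nonzero, not merely formally present. The plan has three steps.

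First, use that the intersection form on \(H_3(X_t)\) of a threefold is skew-symmetric, so \(\lambda_{ji}=-\lambda_{ij}\).

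Second, note that \(\delta_i\) and \(\delta_j\) are linearly independent. They are nonzero, and \(\langle\delta_i,\delta_i\rangle=0\) while \(\langle\delta_i,\delta_j\rangle\ne0\), so \(\delta_j\) cannot be a multiple of \(\delta_i\).

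Third, evaluate the commutator on \(\alpha=\delta_i\). This gives
\[
[N_i,N_j](\delta_i)=\langle\delta_i,\delta_j\rangle\lambda_{ji}\delta_i-0=-\lambda_{ij}^2\,\delta_i\neq0 .
\]
Therefore \([T_i,T_j]\ne0\).

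The main obstacle is the nondegeneracy in the second claim: showing the two terms of \eqref{eq:commutator-N} cannot cancel. Evaluating on \(\delta_i\) handles this cleanly, because that choice kills the second term via \(\langle\delta_i,\delta_i\rangle=0\) and isolates the first. The only facts needed are the vanishing of self-intersection, which follows from skew-symmetry, and the nonvanishing of \(\delta_i\), which holds for the vanishing cycle of an ODP. I would state both facts explicitly with reference to Lemma~\ref{lem:PL-local-rank-one}.
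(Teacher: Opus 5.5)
Your proposal is correct and follows the paper's route: the paper's proof is the direct substitution of \(N_i(\alpha)=\langle\alpha,\delta_i\rangle\delta_i\) into the commutator, and your evaluation \([N_i,N_j](\delta_i)=-\lambda_{ij}^2\delta_i\) makes explicit the non-cancellation check that the paper's one-line proof leaves implicit. Your linear-independence step is not needed for that evaluation, since \(\delta_i\neq0\) already follows from \(\lambda_{ij}=\langle\delta_i,\delta_j\rangle\neq0\) by bilinearity.
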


\begin{proof}
Direct substitution using \eqref{eq:Ni-rank-one} and \eqref{eq:commutator-N}.
\end{proof}

Thus the transport realization carries a distinguished interaction datum, namely the matrix \(\Lambda\).

\subsection{Stokes comparison and transport-side interaction}

By the Stokes--Extension Identification of \cite[Thm.~1.2]{RahmanHodgeAtoms2026}, the Stokes matrix at the conifold locus is identified with the matrix of the variation morphism under mixed-Hodge-module realization. Thus the corrected finite-node degeneration package and the Stokes package encode compatible singular information. In particular, the local rank-one node sectors correspond, under the comparison framework of \cite{RahmanHodgeAtoms2026}, to rank-one flexible sectors on the \(F\)-bundle side, and transport-side interaction is reflected in the multi-node Stokes structure.

\begin{proposition}\label{prop:stokes-extension-light-sector-bridge}
The corrected finite-node light-sector package and the Stokes-side light-sector package are canonically linked under the Stokes--Extension Identification.
\end{proposition}

\begin{proof}
This is the light-sector specialization of the Stokes--Extension comparison established in \cite{RahmanHodgeAtoms2026}.
\end{proof}

Accordingly, the transport realization of finite-node interaction is encoded by
\[
\bigl(\{T_i\}_{i=1}^r,\Lambda\bigr),
\]
with transport-side split case \(\Lambda=0\) and transport-side interacting case \(\Lambda\neq 0\). This is the transport realization later packaged in Section~6.

\section{Rigid, flexible, and interacting light sectors}

We continue in the general setting of Section~\ref{subsec:general-setting}. This section records the atom-side realization of finite-node interaction.

\subsection{Rigid-flexible decomposition and atom-side package}

By \cite[Theorems 1.3 and 1.4]{RahmanHodgeAtoms2026}, the corrected mixed-Hodge-module object
\[
\mathcal P^H\in MHM(X_0)
\]
determines a rigid-flexible decomposition on the atom side: there is a rigid atom
\[
A\!\left(IC^H_{X_0}\right)
\]
and, for each node \(p_k\in\Sigma\), a rank-one flexible atom
\[
A\!\left(i_{k*}\mathbb Q^H_{\{p_k\}}(-1)\right).
\]
The total degeneration atom is
\[
A(\mathcal P^H),
\]
and it fits into the exact sequence
\begin{equation}\label{eq:light-sectors-atom-exact-sequence}
0\longrightarrow A\!\left(IC^H_{X_0}\right)\longrightarrow A(\mathcal P^H)\longrightarrow \bigoplus_{k=1}^r A\!\left(i_{k*}\mathbb Q^H_{\{p_k\}}(-1)\right)\longrightarrow 0.
\end{equation}

\begin{proposition}\label{prop:atom-side-local-rank-one-sectors}
The local singular contribution on the atom side is the finite family
\[
\left\{
A\!\left(i_{k*}\mathbb Q^H_{\{p_k\}}(-1)\right)
\right\}_{k=1}^r,
\]
and each member is rank one.
\end{proposition}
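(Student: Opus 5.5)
The plan is to read the statement off the rigid--flexible decomposition of \cite[Theorems 1.3 and 1.4]{RahmanHodgeAtoms2026}, applied to the mixed-Hodge-module extension \eqref{eq:light-corrected-mhm}. Concretely, the atom functor \(A(-)\) is applied termwise to \eqref{eq:light-corrected-mhm}, which produces the exact sequence \eqref{eq:light-sectors-atom-exact-sequence}. In that sequence the kernel term \(A(IC^H_{X_0})\) is the rigid atom. It is supported on all of \(X_0\) and is determined by the smooth locus \(U\) through the intermediate extension \(j_{!*}\). So it carries no contribution localized at \(\Sigma\). The singular contribution is therefore exactly the cokernel term. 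Since \(A\) is additive on the point-supported summands, this cokernel splits as the direct sum over \(k\) of the atoms \(A(i_{k*}\mathbb Q^H_{\{p_k\}}(-1))\). This identifies the local singular contribution with the stated finite family.

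For the rank-one claim, I would check each summand directly. The object \(i_{k*}\mathbb Q^H_{\{p_k\}}(-1)\) is a pure Hodge structure of rank one, Tate-twisted and supported at the point \(p_k\). Its underlying perverse sheaf is the rank-one skyscraper \(i_{k*}\mathbb Q_{\{p_k\}}\), which corresponds to the summand \(\mathbb Q e_k\) in \eqref{eq:light-nodewise-decomposition}. I would then invoke the rank-compatibility part of the atom construction in \cite{RahmanHodgeAtoms2026}: the atom of a point-supported rank-one Tate object is a rank-one flexible atom. As a cross-check with the transport side, Lemma~\ref{lem:PL-local-rank-one} shows that each \(N_i\) has rank one. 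The Stokes--Extension Identification (Proposition~\ref{prop:stokes-extension-light-sector-bridge}) matches the \(k\)-th flexible atom with the rank-one Stokes/variation contribution of \(\delta_k\).

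The main obstacle is justifying that \(A\) is exact and additive enough to pass from \eqref{eq:light-corrected-mhm} to \eqref{eq:light-sectors-atom-exact-sequence} with the cokernel decomposed nodewise. This is needed because the extension \(\mathcal P^H\) itself is generally non-split, with class \(\sum c_k e_k\). The point to make is that non-splitting of the middle term does not affect the terms of the sequence; it only affects how the middle term is assembled from them. So it suffices to use exactness of \(A\) on short exact sequences in \(MHM(X_0)\) together with additivity on direct sums of point-supported objects, both of which are part of \cite[Thm.~1.3]{RahmanHodgeAtoms2026}. If only a weaker functoriality were available, I would fall back on the weight filtration. The quotient \(\bigoplus_k i_{k*}\mathbb Q^H_{\{p_k\}}(-1)\) is the part of \(\mathcal P^H\) of a single weight different from that of \(IC^H_{X_0}\). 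Taking the graded piece \(\mathrm{Gr}^W\) then canonically isolates the point-supported summands, and the atom computation can be carried out on that graded piece.
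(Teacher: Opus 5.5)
Your proposal is correct and follows the same route as the paper, whose proof is the one-line observation that the statement is the flexible-sector part of \cite[Theorem 1.3]{RahmanHodgeAtoms2026}; every load-bearing step you use (exactness of \(A\) on the corrected sequence, additivity over the point-supported summands, and rank compatibility for \(i_{k*}\mathbb Q^H_{\{p_k\}}(-1)\)) also comes from that theorem, so the transport cross-check and the weight-filtration fallback are optional extras. One caution on the fallback: with \(IC^H_{X_0}\) pure of weight \(3\) and \(i_{k*}\mathbb Q^H_{\{p_k\}}(-1)\) pure of weight \(2\), strictness of \(W\) forces \(W_2\mathcal P^H\) to map isomorphically onto the quotient, which splits \eqref{eq:light-corrected-mhm}; your side remark that \(\mathcal P^H\) is generally non-split is therefore inconsistent with the twist \((-1)\) as written, though this is harmless here because the proposition never uses non-splitting.
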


\begin{proof}
This is the flexible-sector statement of \cite[Theorem 1.3]{RahmanHodgeAtoms2026}.
\end{proof}

\begin{definition}\label{def:atom-side-sector-package}
The \emph{atom-side light-sector package} is the exact sequence \eqref{eq:light-sectors-atom-exact-sequence}.
\end{definition}

\subsection{Non-splitting and comparison with the other realizations}

By \cite[Theorem 1.4]{RahmanHodgeAtoms2026}, the splitting behavior of \eqref{eq:light-sectors-atom-exact-sequence} is controlled by the transport-side interaction matrix \(\Lambda\).

\begin{theorem}\label{thm:atom-side-nonsplitting}
The atom-side sector package \eqref{eq:light-sectors-atom-exact-sequence} splits if and only if
\[
\lambda_{ij}=0
\qquad
\text{for all }i\neq j.
\]
Equivalently, the flexible sectors mix nontrivially if and only if the transport-side interaction matrix \(\Lambda\) is nontrivial.
\end{theorem}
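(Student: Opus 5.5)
The plan is to reduce the statement to \cite[Theorem~1.4]{RahmanHodgeAtoms2026} and then supply the two translation steps that turn it into the stated form. I will first identify the obstruction to splitting \eqref{eq:light-sectors-atom-exact-sequence}. A splitting of a short exact sequence with a direct-sum quotient is the same as a simultaneous splitting of all its pullbacks along the inclusions of the summands $A(i_{k*}\mathbb Q^H_{\{p_k\}}(-1))$. It is also equivalent to the vanishing of the associated class in
\[
\Ext^1\Bigl(\textstyle\bigoplus_k A\bigl(i_{k*}\mathbb Q^H_{\{p_k\}}(-1)\bigr),\,A(IC^H_{X_0})\Bigr).
\]
Because the atom functor $A$ is compatible with the extension \eqref{eq:light-corrected-mhm}, I will transport this class back to $[\mathcal P^H]$. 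Using $\rat$ together with the nodewise decomposition \eqref{eq:light-nodewise-decomposition}, I then write it as the coefficient vector $c_\Sigma$ of \eqref{eq:light-corrected-class}.

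Second, I will use the Stokes--Extension Identification (Proposition~\ref{prop:stokes-extension-light-sector-bridge}) to replace the atom-side extension data by the matrix of the variation morphism, i.e.\ by the multi-node Stokes matrix. Its diagonal part is forced by the rank-one local model (Lemma~\ref{lem:PL-local-rank-one}): each $\lambda_{ii}=\langle\delta_i,\delta_i\rangle=0$ because the intersection form on $H_3$ is skew. So only the off-diagonal entries $\lambda_{ij}=\langle\delta_i,\delta_j\rangle$ can obstruct splitting. The key identity to establish is that the mixing component of the atom extension between the flexible sectors $k=i$ and $k=j$ is the off-diagonal Stokes entry, hence $\lambda_{ij}$ up to sign.

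Third, I will prove the two directions. If $\lambda_{ij}=0$ for all $i\neq j$, then \eqref{eq:commutator-N} shows that the $N_i$ pairwise commute and have orthogonal images. In that case the variation matrix is diagonal, and the Stokes-side package is a direct sum of rank-one pieces, each of which splits off the rigid atom by the one-node case of \cite[Theorem~1.3]{RahmanHodgeAtoms2026}. That gives a splitting of \eqref{eq:light-sectors-atom-exact-sequence}. Conversely, suppose $\lambda_{ij}\neq0$ for some $i\neq j$. Then by Proposition~\ref{prop:interaction-matrix-controls-transport} the commutator $[N_i,N_j]$ is nonzero. A splitting would have to be compatible with transport, and this forces $N_i$ and $N_j$ to act on independent summands, hence to commute, which is a contradiction. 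The final clause, identifying flexible-sector mixing with the nontriviality of $\Lambda$, is then a restatement using $\lambda_{ii}=0$.

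The main obstacle is the converse direction, and in particular the precise sense in which a splitting of the atom sequence must intertwine the monodromy operators. This is really the content of \cite[Theorem~1.4]{RahmanHodgeAtoms2026}. I would first try to justify it by functoriality of the $F$-bundle construction under the Picard--Lefschetz action. If that proves too delicate, I would simply cite that theorem for this step.
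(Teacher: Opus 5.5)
Your fallback of simply citing \cite[Theorem~1.4]{RahmanHodgeAtoms2026} is in fact the paper's entire proof. The statement is that theorem verbatim, so there are no translation steps to supply. The scaffolding you build around the citation, however, has real gaps.

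\textbf{Step 1.} You transport the class of \eqref{eq:light-sectors-atom-exact-sequence} back to $[\mathcal P^H]$ and then to the coefficient vector $c_\Sigma$ of \eqref{eq:light-corrected-class}. Suppose that transport were faithful. Then the atom sequence would split exactly when $[\mathcal P]_{\mathrm{perv}}=0$, which is a condition on $c_\Sigma$ and not on $\Lambda$. This contradicts the statement you are proving:
\begin{itemize}
\item for $r=1$ one has $\Lambda=(0)$, so the sequence must split whatever $c_1$ is;
\item in the $A_1\times A_1$ model of Proposition~\ref{prop:A1xA1-example} the coefficients $c_1,c_2$ are unconstrained, yet the sequence splits;
\item in the $A_2$ model $c_\Sigma=(c,c)$ and the sequence does not split.
\end{itemize}
Remark~\ref{rem:coarse-equivalence-too-crude} makes the same point. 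If the transport is not faithful, Step 1 tells you nothing.

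\textbf{The key identity.} Your claimed identity is not well-typed. Write $A_k:=A(i_{k*}\mathbb Q^H_{\{p_k\}}(-1))$. The class of the sequence lies in $\Ext^1\bigl(\bigoplus_k A_k,\,A(IC^H_{X_0})\bigr)\cong\bigoplus_k\Ext^1\bigl(A_k,\,A(IC^H_{X_0})\bigr)$. That group has one component per node (these are your own pullbacks). None of its components is an extension between two flexible sectors, so there is no $(i,j)$ mixing component for $\lambda_{ij}$ to equal. At most the $k$-th component could depend on the $k$-th row of $\Lambda$, and proving that dependence is exactly the content of the cited theorem.

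\textbf{Step 3, the ``if'' direction.} Theorem~1.3 of \cite{RahmanHodgeAtoms2026} (Proposition~\ref{prop:atom-side-local-rank-one-sectors} here) provides the exact sequence and the rank-one flexible atoms, not a splitting. Splitting in the one-node case is the $r=1$ instance of Theorem~1.4, i.e.\ of the statement being proved, so the argument is circular. There is a further problem. If one-node splitting applied to each pullback, your own pullback criterion would make \eqref{eq:light-sectors-atom-exact-sequence} split unconditionally. You would then have to explain why the pullbacks of a multi-node sequence with $\Lambda\neq 0$ are not one-node pieces, which is again the content of the theorem.

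\textbf{Step 3, the ``only if'' direction.} Nothing in the setup says that a splitting of atoms must intertwine the individual operators $T_i$. These act on $H_3(X_t,\mathbb Q)$, not on the separate atoms, and you concede this step yourself.

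\textbf{What survives.} The commutator computation is fine. So is the observation that $\lambda_{ii}=0$, which makes the final \emph{equivalently} clause a restatement. That vanishing comes from skew-symmetry of the intersection pairing on the middle homology of a threefold, not from Lemma~\ref{lem:PL-local-rank-one}.

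The correct proof is the bare citation, as in the paper. The extra steps should be dropped rather than presented as a derivation.
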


\begin{proof}
This is exactly \cite[Theorem 1.4]{RahmanHodgeAtoms2026}.
\end{proof}

Thus the atom-side realization detects interaction through non-splitting and flexible-sector mixing. Placed alongside the corrected-extension and transport realizations, one obtains the three manifestations of finite-node coupling used in the rest of the paper:
\begin{enumerate}
\item relation-controlled gluing on the corrected-extension side;
\item noncommuting transport on the Picard--Lefschetz / Stokes side;
\item non-splitting and flexible-sector mixing on the atom side.
\end{enumerate}

\begin{proposition}\label{prop:three-linked-manifestations}
Assume the finite-node interaction data are nontrivial. Then the following are canonically linked manifestations of finite-node coupling:
\begin{enumerate}
\item
\[
E_{\Sigma}^{\mathrm{geom}}\subsetneq E_{\Sigma}^{\mathrm{node}};
\]
\item nontrivial operator interaction measured by \(\Lambda\);
\item non-splitting of the atom-side sector package.
\end{enumerate}
\end{proposition}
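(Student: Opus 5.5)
The plan is to treat ``canonically linked'' as the claim that each of (1)--(3) is a realization of one and the same finite-node datum, namely the node set \(\Sigma\) with its local rank-one sectors \(\{i_{k*}\mathbb Q_{\{p_k\}},e_k\}\), \(\{N_k\}\), and \(\{A(i_{k*}\mathbb Q^H_{\{p_k\}}(-1))\}\), and that the canonical comparison maps already available identify the interaction-detecting structure on each side. I will not try to prove a general logical equivalence (1)\(\Leftrightarrow\)(2); the remark after Theorem~\ref{thm:functoriality-intrinsicity-light-sector-package} explicitly disclaims this. Instead I will establish two links, (2)\(\leftrightarrow\)(3) and (1)\(\leftrightarrow\)(3)/(2), through a common indexing by \(\Sigma\) and through the extension class of \(\mathcal P^H\).

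First, I fix the common index set. The nodewise decomposition \eqref{eq:light-nodewise-decomposition}, Lemma~\ref{lem:PL-local-rank-one}, and Proposition~\ref{prop:atom-side-local-rank-one-sectors} each attach exactly one rank-one object to each \(p_k\). The Stokes--Extension bridge, Proposition~\ref{prop:stokes-extension-light-sector-bridge}, matches \(e_k\) with the flexible atom at \(p_k\) and with \(N_k\) via the variation morphism. This gives canonical bijections of the local packages, which is the meaning of ``canonical'' here.

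Second, the link (2)\(\leftrightarrow\)(3) is immediate from Theorem~\ref{thm:atom-side-nonsplitting}. Non-splitting of \eqref{eq:light-sectors-atom-exact-sequence} holds exactly when some off-diagonal \(\lambda_{ij}\neq0\). By Proposition~\ref{prop:interaction-matrix-controls-transport}, such an entry produces a nonzero commutator \eqref{eq:commutator-N}. To see this, evaluate at \(\alpha\) with \(\langle\alpha,\delta_j\rangle\neq0\) and use linear independence of \(\delta_i,\delta_j\), or handle the case of proportional cycles separately.

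Third, for (1) I apply the exact functor \(A(-)\) to \eqref{eq:light-corrected-mhm}. This sends the extension class \([\mathcal P^H]\), whose underlying perverse class is \([\mathcal P]_{\mathrm{perv}}=\sum c_ke_k\), to the class of \eqref{eq:light-sectors-atom-exact-sequence}. By Theorem~\ref{thm:global-gluing-light-sector}, this class lies in \(E_\Sigma^{\mathrm{geom}}\). So the relation lattice cutting out \(E_\Sigma^{\mathrm{geom}}\) constrains which coefficient vectors can be realized by the atom-side sequence, and, through Stokes--Extension, by the transport data. In this sense the proper containment in (1) and the nontriviality of \(\Lambda\) are two readings of the single realized class, one on the coefficient side and one on the operator side.

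The main obstacle is this third step: making precise that the functor \(A\) together with the Stokes--Extension identification carries the incidence relations \(\iota_C\) to relations compatible with the structure of \(\Lambda\). My first attempt would use the hypothesis ``interaction data nontrivial'' only to guarantee that each of (1)--(3) is in its non-split regime. ``Linked'' would then mean functorially related under the comparison maps, not logically equivalent. A sharper block-level compatibility would be left to the block-reduced theorem of Section~6.
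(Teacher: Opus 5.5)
Your Steps 1--2 agree with the paper's proof. The paper simply identifies (1) as the corrected-extension criterion of Section~3, (2) as the transport criterion of Section~4, and (3) as Theorem~\ref{thm:atom-side-nonsplitting}, and declares these to be linked realizations rather than proved equivalents. Your check that $\lambda_{ij}\neq0$ forces $[N_i,N_j]\neq0$ is correct. The proportional-cycle case you set aside cannot occur: if $\delta_j=c\,\delta_i$ then $\lambda_{ij}=c\langle\delta_i,\delta_i\rangle=0$. Directly, $[N_i,N_j](\delta_i)=\lambda_{ij}\lambda_{ji}\delta_i=-\lambda_{ij}^2\delta_i\neq0$.

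The genuine problem is Step~3 together with your use of the hypothesis. You were right to flag it, but the step is false, not merely hard.

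\emph{No functorial transport of the class.} The paper supplies no exact functor $A(-)$ carrying $[\mathcal P^H]$ to the class of \eqref{eq:light-sectors-atom-exact-sequence}. Atom-side splitting is also not governed by where $[\mathcal P]_{\mathrm{perv}}$ sits in $E_\Sigma^{\mathrm{node}}$: by Theorem~\ref{thm:atom-side-nonsplitting} it depends on $\Lambda$ alone. In the $A_1\times A_1$ model, $c_1,c_2$ are unconstrained, yet the atom sequence splits. Moreover $E_\Sigma^{\mathrm{geom}}\subseteq\Ext^1_{\Perv(X_0;\mathbb Q)}(Q_\Sigma,IC_{X_0})$, so an atom-side class cannot ``lie in'' it.

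\emph{The three conditions do not hold together.} You claim that (1) and $\Lambda\neq0$ are ``two readings of the single realized class'', and that the nontriviality hypothesis puts all of (1)--(3) in their interacting regime. Theorem~\ref{thm:block-reduced-structure} refutes this. Take a block-separated configuration with some block of size at least two and $\mu_{\beta\gamma}=0$ for $\beta\neq\gamma$, for instance a single block with $\delta_k=v$ for all $k$. Then $\dim E_\Sigma^{\mathrm{geom}}=|B|<r$, so (1) holds. But every $\lambda_{ij}$ vanishes: inside a block by skew-symmetry, across blocks by $\mu_{\beta\gamma}=0$. Hence $\Lambda=0$ and the atom sequence splits, so (2) and (3) fail. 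This is exactly Remark~\ref{rem:coarse-equivalence-too-crude}.

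So Step~3 should be deleted rather than repaired, and the hypothesis should not be used to force (1)--(3) simultaneously; the paper's proof does not do this. The only link between (1) and (2)--(3) available here is the one the paper uses. All three are attached to the same $\mathcal P^H$ and the same index set $\Sigma$, and they are related through the compatibility framework of Definition~\ref{def:compatibility-of-realizations}. With that deletion, your argument reduces to the paper's.
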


\begin{proof}
Item (1) is the corrected-extension-side interaction criterion from Section~3. Item (2) is the transport-side interaction criterion from Section~4. Item (3) is Theorem~\ref{thm:atom-side-nonsplitting}. These are not asserted here as independently reproved equivalent formulations, but as canonically linked realizations of finite-node interaction across the three frameworks.
\end{proof}
\section{The interacting multi-node light-sector package}

We continue in the general setting of Section~\ref{subsec:general-setting}. The purpose of this section is to package the corrected-extension, transport, and atom-side data into a single finite-node object. The point is not to reprove the three frameworks in full, but to organize their compatible realizations into one common light-sector package and to state precisely what level of compatibility is established here.

\subsection{Definition of the package}

\begin{definition}\label{def:interacting-light-sector-package}
The \emph{interacting multi-node light-sector package} attached to the degeneration \(\pi\) is
\[
\mathfrak L_\Sigma
:=
\Bigl(
\Sigma,\,
E_{\Sigma}^{\mathrm{node}},\,
E_{\Sigma}^{\mathrm{geom}},\,
\{T_i\}_{i=1}^r,\,
\Lambda,\,
A(IC^H_{X_0})\to A(\mathcal P^H)\to \bigoplus_{k=1}^r A(i_{k*}\mathbb Q^H_{\{p_k\}}(-1))
\Bigr),
\]
where:
\begin{enumerate}
\item \(E_{\Sigma}^{\mathrm{node}}\) is the ambient free nodewise light-sector space;
\item \(E_{\Sigma}^{\mathrm{geom}}\) is the geometrically realized corrected-extension-side light-sector subspace, when defined under the hypotheses of \cite{RahmanGlobalGluing};
\item \(\{T_i\}_{i=1}^r\) is the family of Picard--Lefschetz transport operators;
\item \(\Lambda\) is the transport-side interaction matrix;
\item the final term is the atom-side exact sequence of \cite[Thm.~1.3]{RahmanHodgeAtoms2026}.
\end{enumerate}
\end{definition}

The package \(\mathfrak L_\Sigma\) organizes the corrected-extension, transport, and atom realizations into a single finite-node object on which the block-reduced structure theorem of Section~6.6 acts.

\subsection{Corrected-extension realization}

\begin{proposition}\label{prop:light-sector-package-corrected-realization}
The corrected-extension realization of \(\mathfrak L_\Sigma\) is the pair
\[
\bigl(E_{\Sigma}^{\mathrm{node}},E_{\Sigma}^{\mathrm{geom}}\bigr),
\]
with
\[
[\mathcal P]_{\mathrm{perv}}\in E_{\Sigma}^{\mathrm{geom}}\subseteq E_{\Sigma}^{\mathrm{node}}.
\]
Moreover,
\[
E_{\Sigma}^{\mathrm{node}}
\cong
\bigoplus_{k=1}^r \mathbb Qe_k,
\]
and, under the geometric admissibility and block-adaptedness hypotheses of \cite{RahmanGlobalGluing},
\[
E_{\Sigma}^{\mathrm{geom}}\subseteq E_{\Sigma}^{\mathrm{node}}
\]
is canonically determined by the cycle-node incidence law.
\end{proposition}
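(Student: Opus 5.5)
The proposition is essentially an assembly of results already recorded, so the plan is to verify its three assertions in turn by pointing to the relevant earlier statements. The key point is to fix what ``corrected-extension realization of \(\mathfrak L_\Sigma\)'' means. By Definition~\ref{def:interacting-light-sector-package}, the package contains the two components \(E_{\Sigma}^{\mathrm{node}}\) and \(E_{\Sigma}^{\mathrm{geom}}\), together with the transport and atom data. The corrected-extension realization is by definition the projection of \(\mathfrak L_\Sigma\) onto these two components. So the first assertion is a restatement of the definition, combined with the dichotomy of Section~3: Definition~\ref{def:split-light-sector-case} and Proposition~\ref{prop:nonfree-global-assembly} show that the pair \(\bigl(E_{\Sigma}^{\mathrm{node}},E_{\Sigma}^{\mathrm{geom}}\bigr)\) is exactly the datum encoding split versus interacting behavior on this side.

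Second, I would establish the chain \([\mathcal P]_{\mathrm{perv}}\in E_{\Sigma}^{\mathrm{geom}}\subseteq E_{\Sigma}^{\mathrm{node}}\). The class \([\mathcal P]_{\mathrm{perv}}\) lies in \(E_{\Sigma}^{\mathrm{node}}\) because \(\mathcal P\) sits in the short exact sequence \eqref{eq:light-corrected-perverse}. That sequence is an extension of \(Q_\Sigma\) by \(IC_{X_0}\) in \(\Perv(X_0;\mathbb Q)\), so its Yoneda class lies in \(\Ext^1(Q_\Sigma,IC_{X_0})=E_{\Sigma}^{\mathrm{node}}\) by \eqref{eq:light-node-extension-space}. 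The sharper membership in \(E_{\Sigma}^{\mathrm{geom}}\) is then exactly Theorem~\ref{thm:global-gluing-light-sector}, applied under its admissibility and block-adaptedness hypotheses.

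Third, the nodewise isomorphism \(E_{\Sigma}^{\mathrm{node}}\cong\bigoplus_k\mathbb Q e_k\) is \eqref{eq:light-nodewise-decomposition}. If one wants a self-contained check, additivity of \(\Ext^1\) in the first variable reduces it to showing that \(\Ext^1(i_{k*}\mathbb Q_{\{p_k\}},IC_{X_0})\) is one-dimensional at each node. This is the local ODP computation of \cite{RahmanQuiverI}, which I would simply cite.

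Finally, for canonicity of \(E_{\Sigma}^{\mathrm{geom}}\), I would define it as the transport of \(V_{\mathrm{geom}}=\operatorname{Im}(\iota_C)\) from \eqref{eq:gluing-Vgeom} under the coordinate identification \(\mathbb Q^r\cong E_{\Sigma}^{\mathrm{node}}\), \(e_k\leftrightarrow k\)-th basis vector. This step is the main obstacle. The identification depends on the normalization of the classes \(e_k\), and each \(e_k\) is determined only up to a nonzero scalar. One must therefore check that \(E_{\Sigma}^{\mathrm{geom}}\) is independent of these choices. The approach I would try first is to use block-adaptedness: it lets \(V_{\mathrm{geom}}\) be spanned by block indicator combinations, and rescalings fixed compatibly by the global gluing normalization of \cite{RahmanGlobalGluing} preserve this span. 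Failing a direct argument, I would invoke the canonicity clause already contained in Theorem~\ref{thm:global-gluing-light-sector}, which is proved in \cite[Sec.~4]{RahmanGlobalGluing}.
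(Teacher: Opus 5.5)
Your proposal is correct and matches the paper's proof, which simply attributes the decomposition $E_\Sigma^{\mathrm{node}}\cong\bigoplus_k\mathbb Q e_k$ to \cite{RahmanQuiverI}, and the existence, canonicity, and membership $[\mathcal P]_{\mathrm{perv}}\in E_\Sigma^{\mathrm{geom}}$ to the global-gluing package of \cite{RahmanGlobalGluing}; your Yoneda-class observation and local $\Ext^1$ reduction are harmless supplements. Your tentative block-adaptedness argument for canonicity presupposes that the $e_k$ are already normalized compatibly within blocks (independent rescalings do not preserve, e.g., $\mathbb Q(e_1+e_2)$), so the step that actually carries canonicity is your fallback citation, exactly as in the paper.
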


\begin{proof}
The decomposition of \(E_{\Sigma}^{\mathrm{node}}\) is the finite-node state-data theorem of \cite{RahmanQuiverI}. The existence and canonicality of \(E_{\Sigma}^{\mathrm{geom}}\), together with the inclusion
\(
[\mathcal P]_{\mathrm{perv}}\in E_{\Sigma}^{\mathrm{geom}},
\)
are the corrected-extension consequences of the global-gluing theorem package of \cite{RahmanGlobalGluing}.
\end{proof}

\subsection{Transport realization}

\begin{proposition}\label{prop:light-sector-package-operator-realization}
The transport realization of \(\mathfrak L_\Sigma\) is
\[
\bigl(\{T_i\}_{i=1}^r,\Lambda\bigr),
\qquad
\Lambda=(\langle\delta_i,\delta_j\rangle).
\]
Equivalently, using \(N_i:=T_i-\Id\),
\[
N_i(\alpha)=\langle\alpha,\delta_i\rangle\delta_i,
\qquad
N_i^2=0,
\]
and
\[
[N_i,N_j](\alpha)
=
\langle\alpha,\delta_j\rangle\lambda_{ji}\,\delta_i
-
\langle\alpha,\delta_i\rangle\lambda_{ij}\,\delta_j.
\]
Hence the matrix \(\Lambda\) controls the failure of commuting transport.
\end{proposition}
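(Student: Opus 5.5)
The plan is to reduce Proposition~\ref{prop:light-sector-package-operator-realization} to the formulas already recorded in Section~4 and then verify the commutator identity by a short direct computation. By Definition~\ref{def:interacting-light-sector-package}, the transport components of \(\mathfrak L_\Sigma\) are precisely \(\{T_i\}_{i=1}^r\) and \(\Lambda\). So the first assertion is a matter of unpacking the definition, with \(\Lambda=(\lambda_{ij})=(\langle\delta_i,\delta_j\rangle)\) as in \eqref{eq:interaction-pairing}.

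The formulas for \(N_i\) follow immediately from \eqref{eq:PL-operator-i}: subtracting \(\Id\) gives
\[
N_i(\alpha)=\langle\alpha,\delta_i\rangle\delta_i .
\]
Then
\[
N_i^2(\alpha)=\langle\alpha,\delta_i\rangle\langle\delta_i,\delta_i\rangle\delta_i .
\]
I will use that the intersection pairing on \(H_3(X_t)\) of a threefold is antisymmetric, so \(\lambda_{ii}=\langle\delta_i,\delta_i\rangle=0\) and \(N_i^2=0\). This is consistent with \eqref{eq:Ni-rank-one} and Lemma~\ref{lem:PL-local-rank-one}, which may be cited directly.

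For the commutator, I will compute both composites:
\[
N_iN_j(\alpha)=\langle\alpha,\delta_j\rangle\langle\delta_j,\delta_i\rangle\delta_i=\langle\alpha,\delta_j\rangle\lambda_{ji}\delta_i ,
\]
\[
N_jN_i(\alpha)=\langle\alpha,\delta_i\rangle\lambda_{ij}\delta_j .
\]
Subtracting these gives exactly the displayed formula \eqref{eq:commutator-N}. The only point needing care is the bookkeeping of argument order in the pairing, namely which of \(\lambda_{ij}\) and \(\lambda_{ji}\) appears in each term. Antisymmetry (\(\lambda_{ji}=-\lambda_{ij}\)) may also be noted to rewrite the result as \(-\lambda_{ij}\bigl(\langle\alpha,\delta_j\rangle\delta_i+\langle\alpha,\delta_i\rangle\delta_j\bigr)\).

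The final clause, that \(\Lambda\) controls the failure of commuting transport, then follows as in Proposition~\ref{prop:interaction-matrix-controls-transport}. If \(\lambda_{ij}=0\), then \([N_i,N_j]=0\), and hence \([T_i,T_j]=[N_i,N_j]=0\), since the identity parts commute. Conversely, suppose \(\lambda_{ij}\neq0\) with \(\delta_i,\delta_j\) linearly independent. Evaluating the commutator at a class \(\alpha\) with \(\langle\alpha,\delta_i\rangle=0\) and \(\langle\alpha,\delta_j\rangle\neq0\) yields a nonzero multiple of \(\delta_i\), so the commutator is nonzero.

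The main obstacle is this converse direction. It needs such an \(\alpha\) to exist, which follows from nondegeneracy of the pairing on \(H_3(X_t,\mathbb Q)\) together with linear independence of \(\delta_i\) and \(\delta_j\). When the vanishing cycles are proportional (\(\delta_i=\pm\delta_j\), which is the relation-collapse situation), the off-diagonal entry vanishes anyway, so no contradiction arises. I would state this caveat explicitly.
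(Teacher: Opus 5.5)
Your argument is correct and follows essentially the paper's route. The paper's proof just points back to the Picard--Lefschetz formulas of Section~4, where the commutator identity comes from direct substitution, and defers the interpretive role of $\Lambda$ to the Hodge-atoms paper; you carry out that substitution explicitly and also prove the pairwise converse, which the paper only asserts.

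One simplification: your linear-independence caveat is automatic, since $\lambda_{ij}\neq 0$ already forces $\delta_i,\delta_j$ to be independent (proportional classes pair to zero by skew-symmetry). Moreover, the choice $\alpha=\delta_i$ gives $[N_i,N_j](\delta_i)=\lambda_{ij}\lambda_{ji}\delta_i=-\lambda_{ij}^2\delta_i\neq 0$ without invoking nondegeneracy of the pairing.
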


\begin{proof}
The Picard--Lefschetz formulas are standard and were recorded in Section~4. The role of \(\Lambda\) as the transport-side interaction matrix, together with its Stokes-side significance, is established in \cite{RahmanHodgeAtoms2026}.
\end{proof}

\subsection{Atom realization}

\begin{proposition}\label{prop:light-sector-package-atom-realization}
The atom realization of \(\mathfrak L_\Sigma\) is the exact sequence
\begin{equation}\label{eq:light-sector-package-atom-sequence}
0\longrightarrow A(IC^H_{X_0})
\longrightarrow A(\mathcal P^H)
\longrightarrow \bigoplus_{k=1}^r A\!\left(i_{k*}\mathbb Q^H_{\{p_k\}}(-1)\right)
\longrightarrow 0.
\end{equation}
Each
\[
A\!\left(i_{k*}\mathbb Q^H_{\{p_k\}}(-1)\right)
\]
is rank one. Moreover, \eqref{eq:light-sector-package-atom-sequence} splits if and only if
\[
\lambda_{ij}=0
\qquad
\text{for all }i\neq j.
\]
\end{proposition}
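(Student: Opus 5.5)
The plan is to assemble the proposition from three earlier ingredients, each already recorded in Section~5, rather than to prove anything new. The statement has three parts: the exactness of \eqref{eq:light-sector-package-atom-sequence}, the rank-one property of the flexible terms, and the splitting criterion. I will treat them in that order.

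For exactness, I start from the corrected mixed-Hodge-module sequence \eqref{eq:light-corrected-mhm},
\[
0\to IC^H_{X_0}\to \mathcal P^H\to \bigoplus_{k=1}^r i_{k*}\mathbb Q^H_{\{p_k\}}(-1)\to 0 .
\]
I then apply the atom construction \(A(-)\) of \cite{RahmanHodgeAtoms2026}. The rigid-flexible decomposition of \cite[Theorems 1.3 and 1.4]{RahmanHodgeAtoms2026} asserts that this construction carries \eqref{eq:light-corrected-mhm} to an exact sequence. It also asserts that \(A\) sends the finite direct sum of point-supported Tate objects to the direct sum of their atoms. Together these give exactly \eqref{eq:light-sectors-atom-exact-sequence}, which is the displayed sequence \eqref{eq:light-sector-package-atom-sequence}. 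So this step amounts to identifying the atom-side package of Definition~\ref{def:atom-side-sector-package} with the last entry of \(\mathfrak L_\Sigma\) in Definition~\ref{def:interacting-light-sector-package}.

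For the rank-one claim, I invoke Proposition~\ref{prop:atom-side-local-rank-one-sectors} directly. As a consistency check, each \(i_{k*}\mathbb Q^H_{\{p_k\}}(-1)\) is a rank-one skyscraper, matching the rank-one local vanishing cycle of the ODP model \(x_1^2+\dots+x_4^2=t\). It also matches the rank-one nilpotent \(N_k\) of Lemma~\ref{lem:PL-local-rank-one}, which is the operator attached to the same node under the Stokes--Extension bridge of Proposition~\ref{prop:stokes-extension-light-sector-bridge}.

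For the splitting criterion, I cite Theorem~\ref{thm:atom-side-nonsplitting}, which is \cite[Theorem 1.4]{RahmanHodgeAtoms2026}. The one point needing care is that the \(\lambda_{ij}\) there are the entries of the transport matrix \(\Lambda\) appearing in \(\mathfrak L_\Sigma\). To check this, I match the indexing of vanishing cycles \(\delta_i\) with the node labels \(p_k\) via the Stokes--Extension identification, since the Stokes matrix corresponds to the variation morphism nodewise. With that matching, the condition "\(\lambda_{ij}=0\) for \(i\neq j\)" is the same condition on both sides.

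I expect this index-matching to be the main obstacle: one must ensure that the nodewise decomposition \(E^{\mathrm{node}}_\Sigma\cong\bigoplus\mathbb Q e_k\) and the vanishing-cycle basis \(\{\delta_i\}\) are labelled compatibly. Otherwise the off-diagonal vanishing condition would be stated in the wrong basis, although it is invariant under simultaneous permutation, so a mismatch of labels alone is harmless. I will first try to settle this by observing that both labellings are indexed by \(\Sigma\) itself, which should suffice.
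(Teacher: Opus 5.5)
Your proposal is correct and takes the paper's route: the paper's proof is simply the citation \cite[Thms.~1.3, 1.4]{RahmanHodgeAtoms2026}, which is what you assemble via Proposition~\ref{prop:atom-side-local-rank-one-sectors} and Theorem~\ref{thm:atom-side-nonsplitting} for the exact sequence, the rank-one property, and the splitting criterion. Your index-matching step is harmless but unnecessary. As you note, the off-diagonal vanishing condition is invariant under simultaneous relabelling, and both the nodes and the vanishing cycles are indexed by \(\Sigma\).
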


\begin{proof}
This is \cite[Thms.~1.3, 1.4]{RahmanHodgeAtoms2026}.
\end{proof}

\subsection{Compatibility of the three realizations}

The role of the package \(\mathfrak L_\Sigma\) is to place side by side three realizations of finite-node interaction attached to the same corrected degeneration object. The relevant notion of compatibility for the present paper is the following.

\begin{definition}\label{def:compatibility-of-realizations}
The corrected-extension, transport, and atom realizations recorded in \(\mathfrak L_\Sigma\) are said to be \emph{compatible} if they arise from the same finite-node degeneration data and are related through the comparison framework supplied by \cite{RahmanGlobalGluing,RahmanHodgeAtoms2026}; namely:
\begin{enumerate}
\item the corrected-extension realization records the geometrically realized subspace through which the corrected class factors;
\item the transport realization records the vanishing-cycle transport and its interaction matrix;
\item the atom realization records the splitting behavior of the atom-side corrected object;
\item the Stokes--Extension comparison of \cite{RahmanHodgeAtoms2026} identifies the transport/atom-side singular data with the corrected degeneration package underlying the corrected-extension realization;
\item in particular, the rank-one local sectors appearing on the corrected-extension side, on the vanishing-cycle transport side, and on the atom side are to be understood as the same local singular contributions seen through the corresponding realization functors and comparison maps.
\end{enumerate}
\end{definition}

This is the level of compatibility established in the present paper: the three realizations are not asserted to be new independently equivalent formalisms, but rather canonically linked realizations attached to the same finite-node corrected degeneration package.

\subsection{A block-reduced structure theorem}

The coarse comparison
\[
E_{\Sigma}^{\mathrm{geom}}\subsetneq E_{\Sigma}^{\mathrm{node}}
\qquad\text{versus}\qquad
\Lambda\neq 0
\]
is, in general, too crude. The corrected-extension side measures the number of independent
global light-sector directions that survive after relation collapse, whereas the transport side
measures coupling among vanishing sectors through the intersection form. In the block-separated
cycle family, these two pieces of structure can be separated cleanly and compared on the same
finite-dimensional quotient.

\begin{theorem}[Block-reduced structure theorem]\label{thm:block-reduced-structure}
Assume that the finite-node conifold degeneration belongs to the block-separated cycle family
of \cite[Hyp.~5.11]{RahmanGlobalGluing}, with relation-block decomposition
\[
\Sigma=\bigsqcup_{\beta\in B}\Sigma_\beta .
\]
Then the following hold.

\begin{enumerate}
\item The corrected-extension-side relation lattice, the smoothing-side relation lattice, and
the resolution-side relation lattice coincide:
\[
R_{\mathrm{res}}=R_{\mathrm{sm}}=R_{\mathrm{ext}}=R_{\mathrm{blk}}.
\]
Equivalently, the corrected-extension-side realized space is identified with the block quotient:
\[
E_{\Sigma}^{\mathrm{geom}}
\cong
\mathbb Q^r/R_{\mathrm{ext}}
\cong
\mathbb Q^B,
\qquad
\dim_{\mathbb Q}E_{\Sigma}^{\mathrm{geom}}=|B|.
\]

\item For each block \(\Sigma_\beta\), there exists a common vanishing class
\[
v_\beta\in H^3(X_t;\mathbb Q)
\]
such that
\[
\delta_k=v_\beta
\qquad
\text{for every }p_k\in\Sigma_\beta.
\]
Hence the nodewise interaction matrix
\[
\Lambda=(\lambda_{ij}),
\qquad
\lambda_{ij}:=\langle \delta_i,\delta_j\rangle,
\]
descends to a well-defined reduced block matrix
\[
\Lambda_{\mathrm{blk}}=(\mu_{\beta\gamma})_{\beta,\gamma\in B},
\qquad
\mu_{\beta\gamma}:=\langle v_\beta,v_\gamma\rangle.
\]
More precisely, if \(p_i\in\Sigma_\beta\) and \(p_j\in\Sigma_\gamma\), then
\[
\lambda_{ij}=\mu_{\beta\gamma}.
\]

\item The transport interaction among the surviving global sectors is governed by
\(\Lambda_{\mathrm{blk}}\). Writing
\[
N_\beta(\alpha):=\langle \alpha,v_\beta\rangle v_\beta,
\]
one has
\[
[N_\beta,N_\gamma](\alpha)
=
\langle \alpha,v_\gamma\rangle \mu_{\gamma\beta}v_\beta
-
\langle \alpha,v_\beta\rangle \mu_{\beta\gamma}v_\gamma.
\]
In particular, the block transport operators commute pairwise if and only if
\[
\mu_{\beta\gamma}=0
\qquad
\text{for all }\beta\neq\gamma.
\]

\item The atom-side package is non-split if and only if the reduced block matrix
\(\Lambda_{\mathrm{blk}}\) has a nonzero off-diagonal entry. Equivalently, after passing from
raw nodewise data to the \(|B|\) surviving global directions, the remaining atom-side mixing
is governed exactly by \(\Lambda_{\mathrm{blk}}\).
\end{enumerate}

Thus, in the block-separated cycle family, the finite-node light-sector package decomposes
into two logically distinct layers:
\[
\text{relation collapse}
\quad+\quad
\text{residual interaction among surviving sectors}.
\]
The first layer is controlled by the common relation lattice \(R_{\mathrm{blk}}\), while the second
is controlled by the reduced block interaction matrix \(\Lambda_{\mathrm{blk}}\).
\end{theorem}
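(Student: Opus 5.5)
The proof will go item by item. Items (1) and (2) are imported from the block-separated analysis of \cite{RahmanGlobalGluing}. Item (3) is then a direct substitution into the rank-one Picard--Lefschetz formulas of Section~4. Item (4) is a descent of Theorem~\ref{thm:atom-side-nonsplitting} from nodewise to block level.

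\textbf{Item (1).} I would invoke \cite[Hyp.~5.11]{RahmanGlobalGluing} and the comparison results attached to it. Under that hypothesis, the cycle-node incidence datum $(C,\iota_C)$ is block-adapted. The relation lattices computed on the resolution side (exceptional curve classes), on the smoothing side (relations among vanishing cycles), and on the corrected-extension side (the kernel cut out by global gluing in Theorem~\ref{thm:global-gluing-light-sector}) are each identified with the lattice $R_{\mathrm{blk}}$. This lattice is spanned by differences $e_i-e_j$ with $p_i,p_j$ in a common block $\Sigma_\beta$. The quotient $\mathbb Q^r/R_{\mathrm{blk}}$ is then free on the block classes, giving $\mathbb Q^B$. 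Combining this with $V_{\mathrm{geom}}=\operatorname{Im}(\iota_C)$ and the nodewise identification \eqref{eq:light-nodewise-decomposition} gives $E_\Sigma^{\mathrm{geom}}\cong\mathbb Q^B$ and hence $\dim E_\Sigma^{\mathrm{geom}}=|B|$. The only point to check is that the identification of $E_\Sigma^{\mathrm{geom}}$ with the image of block indicators matches the quotient description. This holds because the two are dual presentations of the same block-indicator subspace.

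\textbf{Item (2).} I would use the smoothing-side half of item (1). The relation $e_i-e_j\in R_{\mathrm{sm}}$ for nodes in the same block means, in the block-separated family, that $\delta_i$ and $\delta_j$ are homologous in $H_3(X_t;\mathbb Q)$ with the orientations fixed by that hypothesis. Setting $v_\beta:=\delta_k$ for any $p_k\in\Sigma_\beta$ is therefore well defined. The equality $\lambda_{ij}=\langle v_\beta,v_\gamma\rangle=\mu_{\beta\gamma}$ is then immediate, so $\Lambda$ is constant on block rectangles and descends to $\Lambda_{\mathrm{blk}}$. I expect this step to be the main obstacle. Equality of relation lattices alone gives only proportionality or linear dependence of vanishing classes, not equality. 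I would try first to extract the equality $\delta_k=v_\beta$, with a consistent sign normalization, directly from the defining property of Hyp.~5.11. Failing that, I would absorb signs and scalars into a rescaled $v_\beta$; the rank-one operator $N_\beta$ is insensitive to sign.

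\textbf{Items (3) and (4).} Given item (2), $N_i=N_\beta$ for every $p_i\in\Sigma_\beta$. The commutator formula follows by substituting $\delta_i=v_\beta$ and $\delta_j=v_\gamma$ into \eqref{eq:commutator-N}. For the commuting criterion, suppose $\beta\ne\gamma$. Then $v_\beta,v_\gamma$ are linearly independent, since the blocks are distinct surviving directions by item (1). Also $\langle\cdot,v_\beta\rangle$ and $\langle\cdot,v_\gamma\rangle$ are independent functionals, by nondegeneracy of the intersection form. So the commutator vanishes iff $\mu_{\gamma\beta}=\mu_{\beta\gamma}=0$, and antisymmetry of the pairing in odd degree makes these one condition. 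For item (4), I would apply Theorem~\ref{thm:atom-side-nonsplitting}: the package is non-split iff some $\lambda_{ij}\ne0$ with $i\ne j$. If $i,j$ lie in the same block, then $\lambda_{ij}=\langle v_\beta,v_\beta\rangle=0$ by antisymmetry. Thus the only possibly nonzero off-diagonal $\lambda_{ij}$ come from distinct blocks, where $\lambda_{ij}=\mu_{\beta\gamma}$. So non-splitting is equivalent to $\Lambda_{\mathrm{blk}}$ having a nonzero off-diagonal entry. The concluding two-layer statement is then a restatement of items (1) and (2)--(4) together.
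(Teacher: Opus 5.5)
Your proposal is correct and follows the paper's proof essentially step for step: item (1) is imported from the block-separated comparison results of \cite{RahmanGlobalGluing} (Theorem 5.16 and Corollary 5.17), item (3) is substitution into the rank-one commutator formula, and item (4) descends the nodewise splitting criterion using $\langle v_\beta,v_\beta\rangle=0$ within a block. The step you flag as the main obstacle is not an obstacle: the paper reads $\delta_k=v_\beta$ directly off Hyp.~5.11(4) rather than deriving it from $R_{\mathrm{sm}}$, which is fortunate, because your rescaling fallback would only give $\lambda_{ij}$ up to scalar factors and not the exact equality $\lambda_{ij}=\mu_{\beta\gamma}$; likewise, in item (3) you do not need item (1) for linear independence, since $\mu_{\beta\gamma}\neq 0$ already forces $v_\beta,v_\gamma$ to be independent by antisymmetry.
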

\begin{remark}[Scope of the block-separation hypothesis]\label{rem:block-separation-scope}
The hypothesis that each relation block \(\Sigma_\beta\) carries a common vanishing class
\[
\delta_k=v_\beta
\qquad
(p_k\in\Sigma_\beta)
\]
is a strong geometric condition. It is precisely this hypothesis that allows the raw nodewise
transport data to descend to the reduced block interaction matrix
\[
\Lambda_{\mathrm{blk}}=(\mu_{\beta\gamma}),
\qquad
\mu_{\beta\gamma}=\langle v_\beta,v_\gamma\rangle.
\]
Theorem~6.6 should therefore be read as a structure theorem for the block-separated cycle
family, rather than as a claim about arbitrary finite-node conifold degenerations. Symmetric
multi-node degenerations, in which nodes occur in geometrically indistinguishable families or
common cycle-orbit configurations, provide the main motivating class in which such a hypothesis
is natural.
\end{remark}
\begin{proof}
For (1), Theorem~5.16 of \cite{RahmanGlobalGluing} proves that, in the block-separated cycle
family,
\[
R_{\mathrm{res}}=R_{\mathrm{sm}}=R_{\mathrm{ext}}=R_{\mathrm{blk}},
\]
and identifies the corresponding quotients. Corollary~5.17 of \cite{RahmanGlobalGluing}
then gives
\[
\dim_{\mathbb Q}E_{\Sigma}^{\mathrm{geom}}=|B|.
\]
Since the block quotient is \(\mathbb Q^r/R_{\mathrm{blk}}\cong \mathbb Q^B\), the displayed
identification follows.

For (2), Hypothesis~5.11(4) of \cite{RahmanGlobalGluing} states that on a smoothing \(X_t\)
there exist classes
\[
v_\beta\in H^3(X_t;\mathbb Q)
\]
such that
\[
\delta_k=v_\beta
\qquad
\text{for every }p_k\in\Sigma_\beta.
\]
Thus if \(p_i\in\Sigma_\beta\) and \(p_j\in\Sigma_\gamma\), then
\[
\lambda_{ij}
=
\langle \delta_i,\delta_j\rangle
=
\langle v_\beta,v_\gamma\rangle
=
\mu_{\beta\gamma},
\]
so the reduced matrix \(\Lambda_{\mathrm{blk}}\) is well defined.

For (3), apply the Picard--Lefschetz commutator formula in the form proved in
\cite[Lem.~4.15]{RahmanHodgeAtoms2026}. Replacing the nodewise vanishing classes
\(\delta_i,\delta_j\) by the block classes \(v_\beta,v_\gamma\) yields
\[
[N_\beta,N_\gamma](\alpha)
=
\langle \alpha,v_\gamma\rangle \langle v_\gamma,v_\beta\rangle v_\beta
-
\langle \alpha,v_\beta\rangle \langle v_\beta,v_\gamma\rangle v_\gamma,
\]
which is exactly the displayed formula. As in \cite[Lem.~4.15 and Thm.~4.16]{RahmanHodgeAtoms2026},
pairwise commutativity is therefore equivalent to the vanishing of all off-diagonal block pairings.

For (4), Theorem~1.4 of \cite{RahmanHodgeAtoms2026} states that the atom-side exact sequence
is split if and only if
\[
\lambda_{ij}=0
\qquad
\text{for all }i\neq j.
\]
In the block-separated cycle family, if \(p_i,p_j\) lie in the same block \(\Sigma_\beta\), then
\[
\lambda_{ij}
=
\langle v_\beta,v_\beta\rangle
=
0
\]
by antisymmetry of the degree-three intersection pairing. 

Thus the only potentially nonzero
entries occur across distinct blocks, where
\[
\lambda_{ij}=\mu_{\beta\gamma}.
\]
Hence the atom-side package is non-split if and only if some off-diagonal block entry
\(\mu_{\beta\gamma}\) is nonzero. This proves the claim.
\end{proof}

\begin{remark}[Why the coarse equivalence is too crude]\label{rem:coarse-equivalence-too-crude}
Theorem~\ref{thm:block-reduced-structure} shows that the corrected-extension and transport
realizations do not measure exactly the same thing at the raw nodewise level. The proper
containment
\[
E_{\Sigma}^{\mathrm{geom}}\subsetneq E_{\Sigma}^{\mathrm{node}}
\]
detects \emph{relation collapse}: it measures how many independent global light-sector directions
survive after imposing the geometric relation law. By contrast, the matrix \(\Lambda\) detects
\emph{interaction among sectors}: it measures whether the surviving directions couple through
nontrivial vanishing-cycle intersections. In particular, one may have nontrivial relation collapse
without nontrivial transport coupling inside a block. The correct comparison is therefore not the
raw equivalence
\[
E_{\Sigma}^{\mathrm{geom}}\subsetneq E_{\Sigma}^{\mathrm{node}}
\Longleftrightarrow
\Lambda\neq 0,
\]
but the block-reduced statement of Theorem~\ref{thm:block-reduced-structure}.
\end{remark}

\begin{remark}[Physical interpretation]\label{rem:block-reduced-physical-interpretation}
Theorem~\ref{thm:block-reduced-structure} gives the mathematically correct finite-node input for
a later multi-node analogue of Strominger's conifold mechanism. The naive expectation that one
should integrate in \(r\) independent light hypermultiplets, one for each node, is generally too
large. The corrected-extension-side quotient shows that the actual number of independent global
light-sector directions is
\[
|B|,
\]
the number of relation blocks, not the raw node count \(r\). The reduced block interaction matrix
\[
\Lambda_{\mathrm{blk}}=(\mu_{\beta\gamma})
\]
then measures the residual interaction among those surviving sectors. Thus a correct multi-node effective description should be expected to integrate in \(|B|\) independent light sectors, with couplings
governed by \(\Lambda_{\mathrm{blk}}\), rather than \(r\) freely independent copies of the one-node
mechanism.
\end{remark}

\begin{remark}
The vanishing of \(\langle v_\beta,v_\beta\rangle\) uses the skew-symmetry of the intersection pairing on \(H^3\) for a Calabi--Yau threefold. This step is therefore dimension-specific and should not be read as automatic in other middle-dimensional settings.
\end{remark}

\subsection{Functoriality and intrinsicity}

\begin{theorem}\label{thm:light-sector-package-functoriality}
The package \(\mathfrak L_\Sigma\) is intrinsic to the corrected finite-node degeneration package together with its compatible mixed-Hodge and Stokes/atom realizations. In particular:
\begin{enumerate}
\item \(E_{\Sigma}^{\mathrm{node}}\) and the coefficient data are intrinsic by \cite{RahmanQuiverI};
\item \(E_{\Sigma}^{\mathrm{geom}}\) is intrinsic under the geometric admissibility and block-adaptedness hypotheses of \cite{RahmanGlobalGluing};
\item the atom-side sequence \eqref{eq:light-sector-package-atom-sequence} and the transport-side interaction matrix \(\Lambda\) are intrinsic by \cite{RahmanHodgeAtoms2026}.
\end{enumerate}
Accordingly, \(\mathfrak L_\Sigma\) is canonical relative to the corrected finite-node degeneration data and the compatible realizations used to define it.
\end{theorem}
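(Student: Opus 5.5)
The plan is to verify intrinsicity component by component, following the order of the tuple in Definition~\ref{def:interacting-light-sector-package}. Here ``intrinsic'' means: each entry is determined, up to canonical isomorphism, by the corrected degeneration data $(\pi,\Sigma,\mathcal P,\mathcal P^H)$ together with the realization functors of Definition~\ref{def:compatibility-of-realizations}, and involves no auxiliary choices. I will first fix this as the working definition. It amounts to showing that an isomorphism of finite-node degeneration packages induces isomorphisms of each component compatible with the structure maps. The proof then reduces to checking that no step in the construction of $\mathfrak L_\Sigma$ introduces a non-canonical choice.

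\textbf{Step 1: nodewise data.} The node set $\Sigma$ is the singular locus of $X_0$. The space $E_\Sigma^{\mathrm{node}}=\Ext^1_{\Perv(X_0;\mathbb Q)}(Q_\Sigma,IC_{X_0})$ is defined functorially from $X_0$. The basis $e_k$ is indexed by the canonical summands $i_{k*}\mathbb Q_{\{p_k\}}$ of $Q_\Sigma$. The class $[\mathcal P]_{\mathrm{perv}}$, and hence $c_\Sigma$, is read off from the sequence \eqref{eq:light-corrected-perverse}. That sequence is canonical because $\mathcal P=\Cone(\var_F)[-1]$ is built from $\psi_\pi,\phi_\pi$. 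Item (1) of the theorem is then \cite{RahmanQuiverI}.

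\textbf{Step 2: the realized subspace.} Under the hypotheses of Theorem~\ref{thm:global-gluing-light-sector}, $E_\Sigma^{\mathrm{geom}}$ is the transport of $V_{\mathrm{geom}}=\operatorname{Im}(\iota_C)$ across the nodewise identification of Step 1. So it suffices to invoke the canonicality statement of \cite{RahmanGlobalGluing} for the cycle-node incidence datum, as already recorded in Proposition~\ref{prop:light-sector-package-corrected-realization}.

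\textbf{Step 3: transport and atom data.} The classes $\delta_i\in H_3(X_t,\mathbb Z)$ are determined up to sign by the local Picard--Lefschetz theory at $p_i$. The operators $T_i$ and $N_i$ in \eqref{eq:PL-operator-i}--\eqref{eq:nilpotent-part-i} are invariant under $\delta_i\mapsto-\delta_i$, since the formula is quadratic in $\delta_i$. The entries $\lambda_{ij}$ change only by the diagonal sign matrix $D\Lambda D$. Combined with Proposition~\ref{prop:light-sector-package-operator-realization} and Proposition~\ref{prop:light-sector-package-atom-realization} (via \cite{RahmanHodgeAtoms2026}), this gives item (3).

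\textbf{Main obstacle.} I expect the main difficulty in Step 3 to be the sign ambiguity of $\Lambda$. Strictly, $\Lambda$ is intrinsic only up to conjugation by diagonal $\pm1$ matrices, so the statement should be read that way. The splitting criterion of Theorem~\ref{thm:atom-side-nonsplitting} and the commutator criterion depend only on which entries $\lambda_{ij}$ vanish, and this is sign-invariant. I would therefore state the intrinsic invariant as the class of $\Lambda$ modulo this action, and check that every use of $\Lambda$ in the package factors through it. I would also record the monodromy $T$ and the atom exact sequence as intrinsic via Saito's functoriality of $\mathcal P^H$ \cite{Saito90,RahmanMixedHodgeModules}.

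Finally, compatibility in the sense of Definition~\ref{def:compatibility-of-realizations} is preserved because the comparison maps of \cite{RahmanHodgeAtoms2026} are natural transformations.
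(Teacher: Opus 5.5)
Your proposal is essentially the paper's proof. The paper simply declares the theorem immediate from the intrinsicity results of \cite{RahmanQuiverI}, \cite{RahmanGlobalGluing} and \cite{RahmanHodgeAtoms2026}, and your Steps 1--3 unpack exactly those three citations component by component.

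Your sign caveat is correct for abstract model configurations but moot in the geometric setting of Section~\ref{subsec:general-setting}. There all nodes lie in one fiber, so distinct vanishing spheres sit in disjoint Milnor balls, and $\langle\delta_i,\delta_i\rangle=0$ by skew-symmetry; together these give $\Lambda=0=D\Lambda D$. The normalization that actually has to be imported is that of the generators $e_k$. A priori only the lines $\mathbb Q e_k$ are canonical, and $c_\Sigma$ and the transported $V_{\mathrm{geom}}$ depend on the choice of generator. Like the paper, your Step 1 implicitly takes this normalization from \cite{RahmanQuiverI}.
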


\begin{proof}
Immediate from the cited intrinsicity results in \cite{RahmanQuiverI,RahmanGlobalGluing,RahmanHodgeAtoms2026}.
\end{proof}

\subsection{Precursor for resolving finite-node conifold degenerations}

The package \(\mathfrak L_\Sigma\) is the finite-node mathematical state space on which a later multi-node conifold light-state mechanism must act. Its ingredients are exactly those needed for a multi-node light-sector theory: \textit{local rank-one sectors}, \textit{global assembly law}, \textit{transport interaction}, and \textit{flexible-sector mixing}. In this sense, \(\mathfrak L_\Sigma\) should be viewed as a mathematical precursor to a later multi-node reformulation of Strominger's conifold mechanism, rather than as that physical reformulation itself.
\section{Physical interpretation}

We continue in the general setting of Section~\ref{subsec:general-setting}. This section is interpretive. No new theorem is proved. The purpose is to state the physical reading of the package \(\mathfrak L_\Sigma\) constructed in Section~6, without enlarging the mathematical claims already established there.

\subsection{Local light states at each node}

For a one-parameter conifold degeneration with a single ODP, the vanishing cycle is rank one, and the corresponding local singular sector is the geometric source of the conifold light-state picture \cite{Strominger95}. In the finite-node setting, the corrected package carries one rank-one node sector per
\[
p_k\in\Sigma
\]
on the perverse side, on the mixed-Hodge-module side, and on the atom side \cite{RahmanPerverseNearbyCycles,RahmanMixedHodgeModules,RahmanHodgeAtoms2026}. Thus the local mathematical content of the light-state picture is
\[
\{p_k\}_{k=1}^r
\quad\rightsquigarrow\quad
\{\text{rank-one local singular sectors}\}_{k=1}^r.
\]

At this level, the finite-node theory retains the basic single-node intuition: each node contributes a local rank-one singular sector. What changes in the multi-node case is not the local rank-one nature of the sectors, but the possibility that their global assembly is constrained.

\subsection{Why multi-node interaction is not optional}

The finite-node package is not determined by the local sectors alone. On the corrected-extension side one has
\[
E_{\Sigma}^{\mathrm{geom}}\subseteq E_{\Sigma}^{\mathrm{node}},
\]
and, in the interacting corrected-extension case,
\[
E_{\Sigma}^{\mathrm{geom}}\subsetneq E_{\Sigma}^{\mathrm{node}}
\]
\cite{RahmanGlobalGluing}. On the transport side, finite-node coupling is reflected by noncommuting Picard--Lefschetz and Stokes data \cite{RahmanHodgeAtoms2026}. On the atom side, it is reflected by non-splitting and flexible-sector mixing \cite{RahmanHodgeAtoms2026}.

Hence the multi-node case should not be regarded as a free sum of \(r\) independent one-node mechanisms. Rather, the finite-node degeneration itself may carry nontrivial global interaction data. In the language of the present paper, interaction is already visible at the level of the corrected-extension, transport, and atom-side realizations packaged by \(\mathfrak L_\Sigma\).

\subsection{Noninteracting versus interacting light-sector pictures}

After Theorem~6.6, the correct finite-node distinction is not formulated most naturally at the raw nodewise level. Rather, the interacting picture should be understood as consisting of two
logically distinct layers:
\[
\text{relation collapse}
\qquad+\qquad
\text{residual interaction among surviving sectors}.
\]

The first layer is corrected-extension-side: the globally realized light-sector space may be
smaller than the ambient nodewise space,
\[
E_{\Sigma}^{\mathrm{geom}}\subseteq E_{\Sigma}^{\mathrm{node}}.
\]
The second layer is transport/atom-side: even after passing to the surviving global sectors,
those sectors may or may not couple, and in the block-separated cycle family this residual
interaction is measured by the reduced block interaction matrix
\[
\Lambda_{\mathrm{blk}}=(\mu_{\beta\gamma}).
\]

The noninteracting picture is therefore the case in which no relation collapse occurs and no residual interaction remains. At the raw nodewise level, this is reflected by
\[
E_{\Sigma}^{\mathrm{geom}}=E_{\Sigma}^{\mathrm{node}},
\qquad
\Lambda=0,
\]
together with splitting of the atom-side exact sequence. In that case, the finite-node light-sector
package behaves as a direct sum of local rank-one sectors.

\subsection{A Dirac--Schwinger type pairing picture}

It is useful to explain why the matrix
\[
\Lambda=(\lambda_{ij}),
\qquad
\lambda_{ij}:=\langle \delta_i,\delta_j\rangle,
\]
should be read as more than a formal bookkeeping device. In the physics of BPS states, a Dirac--Schwinger type pairing is an antisymmetric bilinear pairing on charge data; it measures mutual nonlocality and helps distinguish decoupled sectors from coupled ones. In the present paper we do \emph{not} yet construct a full charge lattice, BPS category, or wall-crossing formalism. Nevertheless, the vanishing-cycle intersection form already supplies a natural finite-node antisymmetric pairing on the local singular sectors.

The correct way to read \(\Lambda\) here is therefore as a \emph{precursor interaction datum}. Given two local node sectors \(p_i,p_j\), one computes
\[
\lambda_{ij}=\langle \delta_i,\delta_j\rangle.
\]
Then:
\begin{enumerate}
\item if \(\lambda_{ij}=0\), the corresponding transport operators commute, so those two sectors are transport-side decoupled;
\item if \(\lambda_{ij}\neq 0\), the corresponding transport operators fail to commute, so those two sectors are transport-side coupled;
\item by the transport/atom comparison of Section~6, nonzero interaction data are reflected in atom-side non-splitting behavior;
\item by the corrected-extension/transport packaging of Section~6, nontrivial transport coupling is part of the same finite-node interaction structure whose corrected-extension manifestation is the failure of free global nodewise assembly.
\end{enumerate}

In this sense, \(\Lambda\) is the correct mathematical shadow of a Dirac--Schwinger type pairing picture. It tells the reader how to detect transport-side coupling directly from the vanishing-cycle geometry:
\[
\Lambda=0
\quad\Longrightarrow\quad
\text{transport-side decoupling},
\qquad
\Lambda\neq 0
\quad\Longrightarrow\quad
\text{transport-side coupling}.
\]
What it does \emph{not} yet provide is a full physical charge lattice, BPS degeneracy statement, or wall-crossing formula. Those belong to the later multi-node Strominger recast.

\subsection{Fiber-side package versus moduli-space effective theory}

It is important to distinguish the role of the present package from the role of the vector-multiplet moduli space in Strominger's original conifold analysis. On the moduli-space side, the discriminant locus identifies where the effective theory becomes singular, the periods detect the singular behavior, and the total monodromy records how the geometry changes around that locus \cite{Strominger95}. That perspective is the natural one for describing the low-energy effective theory.

The present paper addresses a different, logically prior question on the degeneration side. Given a finite-node degeneration with node set
\[
\Sigma=\{p_1,\dots,p_r\},
\]
what is the actual finite-dimensional light-sector package carried by the singular fiber, and how is that package constrained by global geometry? In particular, the moduli-space picture does not by itself transparently determine:
\begin{enumerate}
\item how many independent nodewise directions are globally realized;
\item whether the local sectors assemble freely or are constrained by global gluing;
\item how the individual node sectors couple under transport;
\item whether the corresponding atom-side package splits.
\end{enumerate}

The package \(\mathfrak L_\Sigma\) is designed to isolate exactly this fiber-side finite-node data. In that sense, it should be viewed as the degeneration-side input that a later multi-node effective-theoretic reformulation would need to consume.

\subsection{Toward coupled conifold light states}

The package
\[
\mathfrak L_\Sigma
=
\bigl(
\Sigma,\,
E_{\Sigma}^{\mathrm{node}},\,
E_{\Sigma}^{\mathrm{geom}},\,
\{T_i\}_{i=1}^r,\,
\Lambda,\,
A(IC^H_{X_0})\to A(\mathcal P^H)\to \bigoplus_{k=1}^r A(i_{k*}\mathbb Q^H_{\{p_k\}}(-1))
\bigr)
\]
should therefore be viewed as the finite-node mathematical state space underlying a coupled conifold light-state picture. It records:
\begin{enumerate}
\item the local rank-one sectors;
\item the global assembly law;
\item the transport interaction data;
\item the atom-side mixing behavior.
\end{enumerate}

In this sense, \(\mathfrak L_\Sigma\) is the mathematical precursor to a later multi-node reformulation of Strominger's conifold mechanism. It identifies the finite-node structure on which such a coupled light-state theory would have to be built, but it does not yet constitute the full physical recast itself.

\section{Examples}

We continue in the general setting of Section~\ref{subsec:general-setting}. In this section we first record a genuinely geometric multi-node conifold example, and then several explicit finite-node \emph{model configurations} of the package
\[
\mathfrak L_\Sigma
=
\Bigl(
\Sigma,\,
E_{\Sigma}^{\mathrm{node}},\,
E_{\Sigma}^{\mathrm{geom}},\,
\{T_i\}_{i=1}^r,\,
\Lambda,\,
A(IC^H_{X_0})\to A(\mathcal P^H)\to \bigoplus_{k=1}^r A(i_{k*}\mathbb Q^H_{\{p_k\}}(-1))
\Bigr).
\]
The point of the section is threefold:
\begin{enumerate}
\item to anchor the discussion with at least one genuine geometric multi-node conifold example;
\item to show concretely how split, interacting, and partially interacting behavior are reflected in the corrected-extension, transport, and atom-side realizations;
\item to provide model configurations to which later transport, recast, and BPS-oriented constructions may be applied.
\end{enumerate}

Unless explicitly stated otherwise, the examples after the first are presented as finite-node model configurations illustrating the package formalism. Their purpose is to display the behavior of the package in concrete low-dimensional patterns, not to assert here that every displayed pattern is already realized by an explicit projective Calabi--Yau degeneration.

Throughout, we use the nodewise decomposition
\[
E_{\Sigma}^{\mathrm{node}}
\cong
\bigoplus_{k=1}^r \mathbb Qe_k
\]
from \cite{RahmanQuiverI}, the relation-controlled gluing framework from \cite{RahmanGlobalGluing}, and the atom/Stokes realization from \cite{RahmanHodgeAtoms2026}.

\begin{remark}\label{rem:geometric-multinode-gap}
At present, this section contains one genuine geometric multi-node conifold example together with several finite-node model configurations. Producing a concrete projective Calabi--Yau degeneration for which the corrected-extension, transport, and atom-side realizations of finite-node interaction are all exhibited explicitly and in full remains an important open problem for the present program.
\end{remark}

\subsection{Geometric Example \#1: A simple one-node conifold baseline}

A genuine geometric example is the simple conifold degeneration considered by Strominger: a one-parameter Calabi--Yau degeneration with a single vanishing three-cycle and simple Picard--Lefschetz monodromy \cite{Strominger95}. In that setting one has
\[
\Sigma=\{p\},
\]
so there is only one local singular sector. The corrected-extension space is one-dimensional,
\[
E_{\Sigma}^{\mathrm{node}}\cong \mathbb Q e,
\]
and there is no nontrivial distinction between ambient and realized nodewise directions:
\[
E_{\Sigma}^{\mathrm{geom}}=E_{\Sigma}^{\mathrm{node}}.
\]
On the transport side, there is a single vanishing cycle \(\delta\), so there are no off-diagonal interaction terms. Equivalently, the transport-side interaction matrix is the \(1\times 1\) zero matrix,
\[
\Lambda=(0),
\]
and there is no multi-node commutator phenomenon. On the atom side, the corrected object carries a single flexible sector over the rigid bulk sector.

Thus the simple conifold gives a concrete geometric realization of the \emph{local building block} from which the later finite-node package is assembled: one node, one rank-one local sector, and no multi-node interaction. It is not yet an interacting finite-node example, but it is the honest geometric baseline from which the finite-node problem begins.

\begin{remark}
This example is included for geometric anchoring. The genuinely new content of the present paper begins when one passes from this one-node geometric baseline to finite-node interaction data.
\end{remark}

\subsection{Geometric Example \#2: Dwork/quintic conifold member with 125 nodes}

Another geometric example is the classical one-parameter Dwork/quintic conifold family \cite{COGP}
\[
X_\psi:\quad
x_0^5+x_1^5+x_2^5+x_3^5+x_4^5-5\psi\,x_0x_1x_2x_3x_4=0
\subset \mathbb P^4,
\]
viewed as a family of Calabi--Yau threefolds parametrized by \(\psi\). This family provides a genuine multi-node degeneration in a one-parameter Calabi--Yau family, exactly of the type that motivates the finite-node package of the present paper. The full finite-node package extraction for this family is deferred to future work. What follows is a concrete geometric test-case reading of the example in the language of the present paper.

For generic \(\psi\), the fiber \(X_\psi\) is smooth. At the conifold parameter values, equivalently when $\psi^5=1$, the family develops a singular member with finitely many ordinary double points. More precisely, the conifold fiber carries $|\Sigma|=125$ nodes, so this is a genuine projective Calabi--Yau degeneration with many simultaneously realized ODPs in a single one-parameter family.

This example is important because it already exhibits the key conceptual feature that motivates the present paper: the passage from the one-node conifold mechanism to the genuinely finite-node
regime does not require changing from a one-parameter family to a many-parameter family. A single degeneration parameter may already produce a singular fiber with many nodes at once. In
particular, the finite-node problem is not an artificial combinatorial enlargement of the one-node story; it occurs naturally in the classical conifold geometry underlying the quintic example. From the point of view of the present paper, the Dwork/quintic conifold member should be read as follows.

\paragraph{(1) Local node sectors.}
Each point \(p_k\in\Sigma\) is an ODP, so each contributes a rank-one local vanishing sector.
Thus the local finite-node input has the form
\[
\Sigma=\{p_1,\dots,p_{125}\}
\quad\rightsquigarrow\quad
\{\text{125 local rank-one sectors}\}.
\]
Accordingly, the ambient corrected-extension space is formally of the form
\[
E_{\Sigma}^{\mathrm{node}}
\cong
\bigoplus_{k=1}^{125}\mathbb Q e_k,
\]
at least at the level of nodewise state-data.

\paragraph{(2) Global assembly.}
Since the singular fiber comes from one projective degeneration rather than \(125\) unrelated
local models, the geometrically realized subspace
\[
E_{\Sigma}^{\mathrm{geom}}\subseteq E_{\Sigma}^{\mathrm{node}}
\]
is, in general, a smaller quotient cut out by global gluing constraints. Thus the first genuinely
global question in this family is not whether the \(125\) local sectors exist, but how many
independent global directions survive after imposing the geometric relation law.

\paragraph{(3) Transport-side meaning.}
Since the degeneration is one-parameter, the classical moduli-space description detects a conifold locus and its monodromy. But from the fiber-side point of view emphasized here, the presence of
many nodes raises a subtler question: how should the total monodromy be understood relative to the individual vanishing sectors? The package of the present paper records this through the transport-side data
\[
\{T_i\}_{i=1}^{125},
\qquad
\Lambda=(\lambda_{ij}),
\qquad
\lambda_{ij}=\langle \delta_i,\delta_j\rangle.
\]
At the level of the present example, we do not yet compute the full \(125\times125\) interaction
matrix \(\Lambda\), but the example shows why such a matrix is a natural object to seek: the total
monodromy of the conifold fiber alone does not transparently display the internal coupling
structure among the simultaneously vanishing sectors.

\paragraph{(4) Atom-side meaning.}
Likewise, on the atom side, the corrected object carries one flexible rank-one sector per node over
the rigid bulk sector. Thus the geometric example suggests an atom-side exact sequence of the form
\[
0\to A(IC^H_{X_0})\to A(\mathcal P^H)\to
\bigoplus_{k=1}^{125}A(i_{k*}\mathbb Q^H_{\{p_k\}}(-1))\to 0,
\]
whose splitting behavior would encode whether the \(125\) local sectors remain independent or mix nontrivially. The point of the present paper is not that this full splitting computation has
already been carried out for the quintic family, but that the package \(L_\Sigma\) identifies the precise finite-node structure one would need to compute.

\paragraph{(5) What the package would compute for this family.}
A fully worked application of the package \(L_\Sigma\) to the Dwork/quintic conifold member would require four pieces of data:
\begin{enumerate}
\item the actual relation-block decomposition
\[
\Sigma=\bigsqcup_{\beta\in B}\Sigma_\beta;
\]
\item the corresponding realized quotient
\[
E_{\Sigma}^{\mathrm{geom}}\cong \mathbb Q^{125}/R_{\mathrm{ext}};
\]
\item the associated blockwise vanishing classes
\[
v_\beta\in H^3(X_t;\mathbb Q);
\]
\item the reduced block interaction matrix
\[
\Lambda_{\mathrm{blk}}=(\mu_{\beta\gamma}),
\qquad
\mu_{\beta\gamma}:=\langle v_\beta,v_\gamma\rangle.
\]
\end{enumerate}
In other words, the package does not merely ask whether the quintic conifold member has many nodes; it asks how many independent global light-sector directions actually survive, and how the surviving directions interact.

\paragraph{(6) Symmetry-orbit heuristic.}
The Dwork/quintic family carries a large diagonal symmetry, and the \(125\) nodes of the conifold member occur in a highly symmetric configuration. It is therefore natural to expect that nodes
lying in the same symmetry orbit should contribute geometrically related local sectors, and hence should be candidates for belonging to the same relation block in the sense of Theorem~6.6. The
present paper does not verify the full block-separation hypothesis for this family, but this symmetry-orbit picture explains why the quintic conifold member is a natural testing ground for
the block-reduced structure theorem.

Conditionally, if the relation blocks coincide with the relevant symmetry orbits, then the finite-node quotient would be governed by the number of such symmetry-induced blocks rather than by the raw node count \(125\). In that case Theorem~6.6 would predict:
\begin{enumerate}
\item a realized quotient dimension
\[
\dim_{\mathbb Q}E_{\Sigma}^{\mathrm{geom}}=|B|
\]
strictly smaller than \(125\) whenever several nodes lie in a common block;
\item a reduced transport/atom interaction law governed by
\[
\Lambda_{\mathrm{blk}}=(\mu_{\beta\gamma})
\]
rather than by the full nodewise matrix \(\Lambda\).
\end{enumerate}

\begin{remark}
The finite-node package $\mathfrak L_\Sigma$ introduced here is exactly the kind of fiber-side structure needed to organize what the moduli-space picture alone does not transparently separate: the globally realized dimension, the nodewise assembly constraints, and the coupling and splitting structure among local vanishing sectors. We highlight the point that this example is a viable geometric test case for the finite-node light-sector package and its block-reduced structure theorem.
\end{remark}

\subsection{$A_1\times A_1$: split two-node model}

We now pass from geometric examples to controlled finite-node package models. The first is the split two-node configuration, which should be read as the minimal finite-node analogue of two independent one-node sectors.

Let
\[
\Sigma=\{p_1,p_2\}
\]
and assume
\[
\lambda_{12}=\lambda_{21}=0.
\]
Then
\[
\Lambda=
\begin{pmatrix}
0&0\\
0&0
\end{pmatrix}.
\]
Assume further that the cycle-node incidence law is trivial, so that the geometrically realized
subspace equals the full ambient nodewise space:
\[
E_{\Sigma}^{\mathrm{geom}}=E_{\Sigma}^{\mathrm{node}}
\cong
\mathbb Q e_1\oplus \mathbb Q e_2.
\]
Then the corrected class has the form
\[
[\mathcal P]_{\mathrm{perv}}=c_1e_1+c_2e_2
\]
with no relation between \(c_1\) and \(c_2\).

On the transport side,
\[
[T_1,T_2]=0,
\qquad
[N_1,N_2]=0.
\]
On the atom side, \cite[Thm.~1.4]{RahmanHodgeAtoms2026} gives splitting:
\[
0\to A(IC^H_{X_0})\to A(\mathcal P^H)\to
A(i_{1*}\mathbb Q^H_{\{p_1\}}(-1))\oplus
A(i_{2*}\mathbb Q^H_{\{p_2\}}(-1))
\to 0
\]
is split.

Thus this model exhibits the split pattern:
\[
E_{\Sigma}^{\mathrm{geom}}=E_{\Sigma}^{\mathrm{node}},
\qquad
\Lambda=0,
\qquad
A(\mathcal P^H)\cong
A(IC^H_{X_0})\oplus
A(i_{1*}\mathbb Q^H_{\{p_1\}}(-1))\oplus
A(i_{2*}\mathbb Q^H_{\{p_2\}}(-1)).
\]

\begin{proposition}\label{prop:A1xA1-example}
In the \(A_1\times A_1\) model, the finite-node light-sector package is split.
\end{proposition}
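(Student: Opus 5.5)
The plan is to verify "split" separately in each of the three realizations recorded in $\mathfrak L_\Sigma$ (Definition~\ref{def:interacting-light-sector-package}). The noninteracting picture of Section~7.3 identifies a split package at the raw nodewise level with the simultaneous conditions $E_{\Sigma}^{\mathrm{geom}}=E_{\Sigma}^{\mathrm{node}}$, $\Lambda=0$, and splitting of the atom-side exact sequence. So the proof reduces to checking these three conditions under the standing assumptions of the $A_1\times A_1$ model, namely $\lambda_{12}=\lambda_{21}=0$ and a trivial cycle-node incidence law.

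\textbf{Corrected-extension side.} The incidence law is trivial, so $V_{\mathrm{geom}}=\operatorname{Im}(\iota_C)=\mathbb Q^2$. Transporting across the nodewise identification of Proposition~\ref{prop:light-sector-package-corrected-realization} gives $E_{\Sigma}^{\mathrm{geom}}=E_{\Sigma}^{\mathrm{node}}\cong\mathbb Qe_1\oplus\mathbb Qe_2$. By Definition~\ref{def:split-light-sector-case}, the package is split on the corrected-extension side. The corrected class $c_1e_1+c_2e_2$ is subject to no relation.

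\textbf{Transport side.} Diagonal entries satisfy $\lambda_{ii}=\langle\delta_i,\delta_i\rangle=0$ by skew-symmetry of the pairing on $H_3$, which is the same argument used in the proof of Theorem~\ref{thm:block-reduced-structure}(4). Combined with the hypothesis on the off-diagonal entries, this gives $\Lambda=0$. Substituting into the commutator formula \eqref{eq:commutator-N} gives $[N_1,N_2]=0$. Since $T_i=\Id+N_i$, we have $[T_1,T_2]=[N_1,N_2]=0$, and Proposition~\ref{prop:interaction-matrix-controls-transport} applies.

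\textbf{Atom side.} Since $\lambda_{ij}=0$ for all $i\neq j$, Theorem~\ref{thm:atom-side-nonsplitting} (that is, \cite[Thm.~1.4]{RahmanHodgeAtoms2026}) shows that \eqref{eq:light-sector-package-atom-sequence} splits. This gives the displayed direct-sum decomposition of $A(\mathcal P^H)$.

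The main obstacle is definitional. The paper never states a single formal definition of "the package is split." I would fix the meaning as the conjunction of the three conditions above, citing Section~7.3 and Definition~\ref{def:compatibility-of-realizations}. A second, secondary point: the atom-side theorem is imported, so one should check that the $A_1\times A_1$ model satisfies its hypotheses, namely the general setting of Section~\ref{subsec:general-setting}. Since the model is presented as a model configuration, I would state that this is assumed as part of the model.
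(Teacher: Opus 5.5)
Your proposal is correct and takes essentially the same route as the paper's proof. The paper likewise checks the three realization-wise split conditions: $E_{\Sigma}^{\mathrm{geom}}=E_{\Sigma}^{\mathrm{node}}$, $\Lambda=0$, and splitting of the atom-side sequence via Theorem~\ref{thm:atom-side-nonsplitting}; your extra details, that $\lambda_{ii}=0$ by skew-symmetry and that $[T_1,T_2]=[N_1,N_2]$, only make explicit what the paper leaves implicit.
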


\begin{proof}
The corrected-extension realization is split because
\[
E_{\Sigma}^{\mathrm{geom}}=E_{\Sigma}^{\mathrm{node}}.
\]
The transport realization is split because
\[
\Lambda=0.
\]
The atom realization is split because the flexible-sector exact sequence splits. These are
exactly the split conditions recorded in Sections~3--6 for the corresponding realizations.
\end{proof}

This example is the correct control case for the genuinely finite-node regime. It shows that
the local rank-one sectors may remain globally independent, and therefore provides the baseline
against which interacting models should be measured.

\subsection{$A_2$: interacting two-node model}

The next model is the smallest genuinely interacting finite-node configuration. It should be
viewed as the minimal case in which the corrected-extension, transport, and atom-side
manifestations of interaction are all simultaneously visible.

Let
\[
\Sigma=\{p_1,p_2\}
\]
and assume
\[
\lambda_{12}\neq 0.
\]
By antisymmetry of the degree-three intersection pairing,
\[
\lambda_{21}=-\lambda_{12}\neq 0.
\]
Hence
\[
\Lambda=
\begin{pmatrix}
0&\lambda_{12}\\
-\lambda_{12}&0
\end{pmatrix}
\neq 0.
\]

Assume the global gluing law identifies the two nodewise directions into a one-dimensional
relation-controlled subspace:
\[
E_{\Sigma}^{\mathrm{geom}}
=
\mathbb Q(e_1+e_2)
\subsetneq
\mathbb Qe_1\oplus\mathbb Qe_2
=
E_{\Sigma}^{\mathrm{node}}.
\]
Then the corrected class has the form
\[
[\mathcal P]_{\mathrm{perv}}=c(e_1+e_2),
\qquad
c\in\mathbb Q.
\]

On the transport side,
\[
[N_1,N_2](\alpha)
=
\langle \alpha,\delta_2\rangle \lambda_{21}\delta_1
-
\langle \alpha,\delta_1\rangle \lambda_{12}\delta_2,
\]
hence
\[
[N_1,N_2]\neq 0,
\qquad
[T_1,T_2]\neq 0.
\]
On the atom side, the exact sequence
\[
0\to A(IC^H_{X_0})\to A(\mathcal P^H)\to
A(i_{1*}\mathbb Q^H_{\{p_1\}}(-1))\oplus
A(i_{2*}\mathbb Q^H_{\{p_2\}}(-1))
\to 0
\]
is non-split by \cite[Thm.~1.4]{RahmanHodgeAtoms2026}.

Thus this model exhibits interaction in all three packaged realizations:
\[
E_{\Sigma}^{\mathrm{geom}}\subsetneq E_{\Sigma}^{\mathrm{node}},
\qquad
\Lambda\neq 0,
\qquad
A(\mathcal P^H)\ \text{non-split}.
\]

\begin{proposition}\label{prop:A2-example}
In the \(A_2\) model, the finite-node light-sector package is interacting in each of the
three packaged realizations.
\end{proposition}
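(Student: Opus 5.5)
The proof checks the three packaged realizations of \(\mathfrak L_\Sigma\) one at a time. For each, we compare the model's hypotheses with the interaction criterion recorded earlier for that realization. This mirrors the proof of Proposition~\ref{prop:A1xA1-example}, with every split condition replaced by its interacting counterpart.

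\emph{Corrected-extension side.} By hypothesis the gluing law gives \(E_{\Sigma}^{\mathrm{geom}}=\mathbb Q(e_1+e_2)\). This space is one-dimensional, while \(E_{\Sigma}^{\mathrm{node}}\cong\mathbb Qe_1\oplus\mathbb Qe_2\) is two-dimensional. The containment is therefore proper, which is exactly the interacting case of Definition~\ref{def:split-light-sector-case}. Proposition~\ref{prop:nonfree-global-assembly} then rules out free nodewise assembly.

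\emph{Transport side.} We have \(\lambda_{12}\neq0\) and, by antisymmetry, \(\lambda_{21}=-\lambda_{12}\), so \(\Lambda\neq0\). To show that \([N_1,N_2]\neq0\), we evaluate the commutator formula \eqref{eq:commutator-N} on the test vector \(\alpha=\delta_1\). Since \(\langle\delta_1,\delta_1\rangle=0\), the second term drops out and
\[
[N_1,N_2](\delta_1)=\lambda_{12}\lambda_{21}\,\delta_1=-\lambda_{12}^2\,\delta_1 .
\]
This is nonzero, because \(\lambda_{12}\neq0\) forces \(\delta_1\neq0\). Since \([T_1,T_2]=[N_1,N_2]\), the Picard--Lefschetz operators also fail to commute. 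This gives the interacting case of Proposition~\ref{prop:interaction-matrix-controls-transport}.

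\emph{Atom side.} Theorem~\ref{thm:atom-side-nonsplitting} says the atom sequence splits if and only if \(\lambda_{ij}=0\) for all \(i\neq j\). Here \(\lambda_{12}\neq0\), so the sequence is non-split and the flexible sectors mix.

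Combining the three cases gives the claim in the sense of Definition~\ref{def:compatibility-of-realizations}. The main obstacle is the transport step: one must choose a test vector for the commutator rather than just point to the formula. If the pairing \(\langle\delta_1,\delta_1\rangle\) were not known to vanish, we would instead pick \(\alpha\) with \(\langle\alpha,\delta_2\rangle\neq0\) and \(\langle\alpha,\delta_1\rangle=0\), using nondegeneracy of the pairing whenever \(\delta_1,\delta_2\) are independent. In this model that independence holds automatically, since \(\delta_2\) proportional to \(\delta_1\) would force \(\lambda_{12}=0\).
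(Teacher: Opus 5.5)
Your proposal is correct and follows the paper's own argument, which matches each model hypothesis against the interaction criterion for the corresponding realization from Sections~3--6; your explicit evaluation $[N_1,N_2](\delta_1)=-\lambda_{12}^2\,\delta_1\neq 0$ supplies the nonvanishing of the commutator, which the paper only asserts. Your closing fallback is unnecessary and slightly circular, because concluding that $\delta_2\in\mathbb Q\delta_1$ forces $\lambda_{12}=0$ already uses $\langle\delta_1,\delta_1\rangle=0$.
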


\begin{proof}
The first condition is the corrected-extension-side interacting criterion. The second is the
transport-side interacting criterion. The third is the atom-side interacting criterion. Each
follows from the assumptions of the model together with the results of Sections~3--6.
\end{proof}

This is the smallest nontrivial interacting example. Its importance is not merely pedagogical:
it shows that the naive expectation of two freely independent rank-one sectors can fail already
in the two-node case. The correct finite-node object is not the raw nodewise sum, but the
package \(\mathfrak L_\Sigma\) built from the realized quotient together with the transport and
atom-side interaction data.

\subsection{Three-node block-incidence model}

The next model exhibits the phenomenon that interaction need not be all-or-nothing. It is
the simplest package-level example in which one sees both relation collapse and partial
decoupling at the same time.

Let
\[
\Sigma=\{p_1,p_2,p_3\}
\]
and assume the cycle-node incidence law has two relation blocks
\[
\Sigma_1=\{p_1,p_2\},
\qquad
\Sigma_2=\{p_3\}.
\]
Equivalently, assume the admissible coefficient space is
\[
E_{\Sigma}^{\mathrm{geom}}
=
\operatorname{Span}_{\mathbb Q}\{e_1+e_2,\ e_3\}
\subsetneq
\operatorname{Span}_{\mathbb Q}\{e_1,e_2,e_3\}
=
E_{\Sigma}^{\mathrm{node}}.
\]
Then the corrected class has the form
\[
[\mathcal P]_{\mathrm{perv}}=a(e_1+e_2)+be_3,
\qquad
a,b\in\mathbb Q.
\]

Assume moreover that
\[
\lambda_{12}\neq 0,
\qquad
\lambda_{13}=\lambda_{23}=0.
\]
By antisymmetry,
\[
\lambda_{21}=-\lambda_{12}\neq 0.
\]
Then
\[
\Lambda=
\begin{pmatrix}
0&\lambda_{12}&0\\
-\lambda_{12}&0&0\\
0&0&0
\end{pmatrix}.
\]
Thus \((p_1,p_2)\) form an interacting block, while \(p_3\) is split from that block.

From the corrected-extension point of view, the first two nodes contribute only one global
direction \(e_1+e_2\), while the third survives independently. From the transport point of
view, coupling occurs only within the first block. On the atom side, the transport/atom
comparison suggests that
\[
A(i_{1*}\mathbb Q^H_{\{p_1\}}(-1))
\quad\text{and}\quad
A(i_{2*}\mathbb Q^H_{\{p_2\}}(-1))
\]
mix nontrivially, while
\[
A(i_{3*}\mathbb Q^H_{\{p_3\}}(-1))
\]
remains independent.

\begin{proposition}\label{prop:three-node-block-example}
In the three-node block-incidence model,
\[
E_{\Sigma}^{\mathrm{geom}}
=
\operatorname{Span}_{\mathbb Q}\{e_1+e_2,\ e_3\},
\qquad
\Lambda=
\begin{pmatrix}
0&\lambda_{12}&0\\
-\lambda_{12}&0&0\\
0&0&0
\end{pmatrix},
\]
so the finite-node light-sector package decomposes into one interacting two-node block
together with one split one-node block.
\end{proposition}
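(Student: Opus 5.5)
The proposition is read off component by component from the model assumptions, following the pattern of Propositions~\ref{prop:A1xA1-example} and~\ref{prop:A2-example}.

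\textbf{Corrected-extension layer.} The displayed identification of \(E_{\Sigma}^{\mathrm{geom}}\) is exactly the assumed block incidence law, so nothing needs proving beyond restating it. By Theorem~\ref{thm:global-gluing-light-sector} the class \([\mathcal P]_{\mathrm{perv}}\) lies in this span, giving the form \(a(e_1+e_2)+be_3\). The containment is proper: \(e_1\) is not in \(\operatorname{Span}\{e_1+e_2,e_3\}\), so the package is interacting on the corrected-extension side in the sense of Definition~\ref{def:split-light-sector-case}. The realized dimension is \(2=|B|\), consistent with Theorem~\ref{thm:block-reduced-structure}(1).

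\textbf{Transport layer.} Take \(\lambda_{12}\neq0\) and \(\lambda_{13}=\lambda_{23}=0\). Antisymmetry of the pairing on \(H^3\) gives \(\lambda_{21}=-\lambda_{12}\) and \(\lambda_{ii}=0\), which yields the displayed \(\Lambda\). Substituting into \eqref{eq:commutator-N} then gives two facts:
\begin{itemize}
\item \([N_1,N_3]=[N_2,N_3]=0\), because both terms carry the factor \(\lambda_{13}\), \(\lambda_{31}\), \(\lambda_{23}\) or \(\lambda_{32}\);
\item \([N_1,N_2]\neq0\). To see this, choose \(\alpha\) with \(\langle\alpha,\delta_2\rangle\neq0\), which is possible by nondegeneracy of the pairing and \(\delta_2\neq0\) (forced by \(\lambda_{12}\neq0\)). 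Since \(\lambda_{12}\neq0\), the classes \(\delta_1,\delta_2\) are linearly independent, so the coefficient \(\langle\alpha,\delta_2\rangle\lambda_{21}\) of \(\delta_1\) is nonzero.
\end{itemize}
By Proposition~\ref{prop:interaction-matrix-controls-transport}, this says \(\{p_1,p_2\}\) is transport-coupled and \(p_3\) is transport-decoupled from both.

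\textbf{Atom layer.} Theorem~\ref{thm:atom-side-nonsplitting} shows the full sequence \eqref{eq:light-sector-package-atom-sequence} is non-split, since \(\lambda_{12}\neq0\). The claimed finer statement is that the \(p_3\) summand splits off while the \(\{p_1,p_2\}\) part stays non-split, and I expect this to be the main obstacle. The cited criterion from \cite[Thm.~1.4]{RahmanHodgeAtoms2026} is only a global split/non-split dichotomy, not blockwise.

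I would handle it as follows:
\begin{enumerate}
\item Pull back the extension along the inclusion of \(A(i_{3*}\mathbb Q^H_{\{p_3\}}(-1))\), and separately along the inclusion of the \(\{p_1,p_2\}\) summand.
\item Apply the criterion to each sub-configuration, using that the relevant off-diagonal entries of \(\Lambda\) restrict: they vanish for \(\{p_3\}\) and are nonzero for \(\{p_1,p_2\}\).
\end{enumerate}
This needs the atom-side criterion to be compatible with restriction to a subset of nodes, i.e.\ functoriality of the Stokes--Extension comparison (Proposition~\ref{prop:stokes-extension-light-sector-bridge}) under passing to sub-sums of the flexible quotient. If that compatibility cannot be extracted from \cite{RahmanHodgeAtoms2026}, I would weaken the atom-side claim to the global non-splitting statement plus transport-side decoupling of \(p_3\). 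The decomposition into ``one interacting two-node block together with one split one-node block'' would then rest on the corrected-extension and transport layers, with the atom layer supporting it only at the level of Definition~\ref{def:compatibility-of-realizations}.

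Note that Theorem~\ref{thm:block-reduced-structure} cannot be invoked directly here. Its block-separated hypothesis would force \(\delta_1=\delta_2\) and hence \(\lambda_{12}=0\), contradicting the model. The argument must therefore go through the nodewise results of Sections~3--5 rather than the block-reduced theorem.
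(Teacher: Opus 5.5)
Your proposal is correct and follows the paper's route: the paper's proof is the single line that the displayed \(E_{\Sigma}^{\mathrm{geom}}\) and \(\Lambda\) are immediate from the model's assumed incidence law and the definition of \(\Lambda\) (with antisymmetry). That is exactly your corrected-extension and transport layers, and your explicit commutator verification is added detail. The paper's proof does not address the atom-side blockwise splitting at all; the surrounding text only says the comparison ``suggests'' it. So your fallback reading is precisely the level at which the decomposition is established there, and your observation that Theorem~\ref{thm:block-reduced-structure} cannot apply is correct: its hypothesis would force \(\lambda_{12}=\langle v_1,v_1\rangle=0\), which also exposes a tension with the paper's claim that this model illustrates that theorem.
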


\begin{proof}
Immediate from the definitions of \(E_{\Sigma}^{\mathrm{geom}}\) and \(\Lambda\).
\end{proof}

This example is important because it illustrates the same two-layer structure emphasized in
Theorem~6.6:
\[
\text{relation collapse}
\quad+\quad
\text{residual interaction among surviving sectors}.
\]
The corrected-extension side reduces the three raw nodewise directions to two surviving global
directions, while the transport-side matrix shows that only one of those two directions is
actually coupled.

\subsection{A comparative model across realizations}

We record the \(A_2\) model side by side in the three realizations packaged by
\(\mathfrak L_\Sigma\). The point of this subsection is not to add a new theorem, but to
show explicitly how the same finite-node configuration is read on the corrected-extension,
transport, and atom sides.

\paragraph{Corrected-extension side.}
The ambient nodewise space is
\[
E_{\Sigma}^{\mathrm{node}}=\mathbb Qe_1\oplus \mathbb Qe_2,
\]
while the geometrically realized subspace is
\[
E_{\Sigma}^{\mathrm{geom}}=\mathbb Q(e_1+e_2).
\]
Thus the two raw nodewise directions do not survive independently: the corrected-extension
package retains only one globally realized direction. This is the corrected-extension-side
manifestation of finite-node interaction.

\paragraph{Transport side.}
The interaction matrix is
\[
\Lambda=
\begin{pmatrix}
0&\lambda_{12}\\
-\lambda_{12}&0
\end{pmatrix}
\neq 0,
\]
and the Picard--Lefschetz operators fail to commute:
\[
[T_1,T_2]\neq 0.
\]
Equivalently, the nilpotent operators satisfy
\[
[N_1,N_2](\alpha)
=
\langle \alpha,\delta_2\rangle \lambda_{21}\delta_1
-
\langle \alpha,\delta_1\rangle \lambda_{12}\delta_2
\neq 0.
\]
Thus the transport-side manifestation of the same configuration is nontrivial coupling of
the two local vanishing sectors.

\paragraph{Atom side.}
The atom-side exact sequence is
\[
0\to A(IC^H_{X_0})\to A(\mathcal P^H)\to
A(i_{1*}\mathbb Q^H_{\{p_1\}}(-1))\oplus
A(i_{2*}\mathbb Q^H_{\{p_2\}}(-1))
\to 0,
\]
and it is non-split. Thus the two flexible rank-one sectors do not remain independent over
the rigid bulk sector; rather, they mix nontrivially.

Putting these three descriptions together, the \(A_2\) model displays the same finite-node
configuration in three compatible realizations:
\[
E_{\Sigma}^{\mathrm{geom}}\subsetneq E_{\Sigma}^{\mathrm{node}},
\qquad
\Lambda\neq 0,
\qquad
A(\mathcal P^H)\ \text{non-split}.
\]
The corrected-extension side records the collapse from two raw nodewise directions to one
realized global direction; the transport side records nontrivial coupling of the surviving
sectoral data; and the atom side records that this coupling appears as non-splitting of the
corrected atom package.

\begin{remark}
The displayed parallelism should be read in the sense of the compatibility framework of
Section~6. The present example is not intended as a new stand-alone proof that any one of
the three displayed conditions implies the other two in complete generality. Rather, it shows
how the package \(\mathfrak L_\Sigma\) allows the same finite-node interaction pattern to be
read concretely across the corrected-extension, transport, and atom realizations.
\end{remark}

\section{Outlook}

The constructions of the present paper suggest the following directions.

\begin{itemize}
\item \textbf{From the fiber-side package to moduli-space effective theory.}
Translate the finite-node package \(\mathfrak L_\Sigma\) into the moduli-space language of periods, monodromy, and effective couplings, with the goal of formulating a genuine multi-node analogue of Strominger's conifold mechanism.

\item \textbf{The multi-node conifold light-state mechanism.}
Reinterpret the package \(\mathfrak L_\Sigma\) as the finite-node state space underlying coupled conifold light states.

\item \textbf{Quotient and symmetry refinements.}
Study equivariant, quotient, and symmetry-adapted versions of the interacting light-sector package.

\item \textbf{BPS sheaves, halo sectors, and higher extensions.}
Relate the finite-node package to BPS sheaf constructions, halo-type sector formation, and higher-extension data.

\item \textbf{Wall-crossing on the interacting package.}
Formulate transport and wall-crossing structures acting directly on the interacting multi-node light-sector package.

\item \textbf{Geometric realization of interacting finite-node models.}
Construct explicit projective Calabi--Yau degenerations realizing interacting multi-node patterns, beyond the one-node geometric baseline recorded in Section~8.
\end{itemize}

%
%
\printbibliography

\end{document}